\newcommand{\Aut}{\mathop{\mathrm{Aut}}\nolimits}
\newcommand{\End}{\mathop{\mathrm{End}}\nolimits}
\newcommand{\Ext}{\mathop{\mathrm{Ext}}\nolimits}
\newcommand{\GL}{\mathop{\mathrm{GL}}\nolimits}
\newcommand{\Hom}{\mathop{\mathrm{Hom}}\nolimits}
\newcommand{\indlim}{\mathop{\underrightarrow{\lim}}\limits}
\newcommand{\Maps}{\mathop{\mathrm{Maps}}\nolimits}
\newcommand{\prlim}{\mathop{\underleftarrow{\lim}}\limits}
\newcommand{\Sy}{\mathop{\mathfrak{S}}\nolimits}
\date{}
\newtheorem{theorem}{Theorem}[section]
\newtheorem{conjecture}[theorem]{Conjecture}
\newtheorem{corollary}[theorem]{Corollary}
\newtheorem{defin}[theorem]{Definition}
\newtheorem{lemma}[theorem]{Lemma}
\newtheorem{proposition}[theorem]{Proposition}
\title{On semilinear representations of the infinite symmetric group} 
\author{M.Rovinsky} 
\address{National Research University Higher School of Economics, 
AG Laboratory HSE, 7 Vavilova str., Moscow, Russia, 117312
\& Institute for Information Transmission Problems of Russian 
Academy of Sciences}
\email{marat@mccme.ru}
\begin{document} 
\begin{abstract} In this note the smooth (i.e. with open stabilizers) 
linear and {\sl semilinear} representations of certain permutation 
groups (such as infinite symmetric group or automorphism group of an 
infinite-dimensional vector space over a finite field) are studied. 
Many results here are well-known to the experts, at least in the case 
of {\sl linear representations} of symmetric group. The presented results 
suggest, in particular, that an analogue of Hilbert's Theorem 90 should 
hold: in the case of faithful action of the group on the base field the 
irreducible smooth semilinear representations are one-dimensional (and 
trivial in appropriate sense). \end{abstract}
\maketitle 

Throught the paper, $G$ denotes a {\sl permutation group}. 
\begin{defin} A {\bf permutation group} is a Hausdorff topological group 
$G$ admitting a base of open subsets consisting of the left and right 
shifts of subgroups. \end{defin} 
If we denote by $B$ a collection of open subgroups such that the finite 
intersections of conjugates of elements of $B$ form a base of open 
neighbourhoods of $1$ in $G$ (e.g., the set of all open subgroups of $G$), 
then $G$ acts faithfully on the set $\Psi:=\coprod_{U\in B}G/U$, so (i) 
$G$ becomes a {\sl permutation group of} $\Psi$, (ii) the shifts of the 
pointwise stabilizers $G_T$ of the finite subsets $T\subset\Psi$ form 
a base of the topology of $G$. 
Clearly, $G$ is totally disconnected. 

In the most general setting, we are interested in the {\sl continuous} 
$G$-actions on the {\sl discrete} sets (i.e. with open stabilizers; 
they are called {\sl smooth} in what follows). In practice, the considered 
$G$-sets are endowed with extra structures, e.g., of a vector $k$-space 
for a field $k$. In that case our primary goal is a description of the 
{\sl smooth} representations of $G$ in the $k$-vector spaces, especially 
of the irreducible ones. The smooth representations form a $k$-linear 
Grothendieck category with $\bigoplus_{s\in S}W^{\otimes^s_k}$ as one of 
many possible generators, where $W=k[\Psi]$ and $S\subseteq\mathbb N$ is 
an infinite set. However, structure of the $k[G]$-modules $W^{\otimes^s_k}$ 
can be quite complicated. 

Here and throughout the paper, for any abelian group $P$ and a set $S$, 
we denote by $P[S]$ the abelian group consisting of the finite formal 
sums $\sum_ia_i[s_i]$ for all $a_i\in P$, $s_i\in S$. If $P$ is a left 
module over a ring $A$ then $P[S]$ is naturally a left $A$-module. 

For any group $G$ and any field $k$ there is a field extension $K|k$ 
endowed with a faithful $k$-linear $G$-action. Namely, as $K$ one can 
take the fraction field of the symmetric algebra of a faithful 
representation $W$ of $G$ over $k$. Then there is a natural surjection 
$W\otimes_kK\to K$ of $K$-{\sl semilinear} representations of $G$. 
(The semilinear representations are defined in \S\ref{semilin-repr}.) 

This leads to the problem of describing smooth semilinear representations 
over $K$ for a field $K$ endowed with a smooth faithful $G$-action. 
Another motivation for the study of the semilinear representations comes 
from the guess that there are quite few smooth irreducible semilinear 
representations. Two examples of this phenomenon are given by Theorems 
\ref{Satz90} and \ref{smooth-simple}. 
\begin{theorem}[Hilbert's Theorem 90] \label{Satz90} 
Suppose that $G$ is precompact, i.e., any open subgroup of $G$ is of 
finite index. Then any smooth semilinear representation $V$ of $G$ over 
$K$ is trivial: the natural map $V^G\otimes_{K^G}K\to V$ is an isomorphism, 
so $V\cong\bigoplus_IK$ for a basis $I$ of the $K^G$-vector space $V^G$. \end{theorem}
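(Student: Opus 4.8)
To prove Theorem~\ref{Satz90} the plan is to reduce, in two steps, to the classical form of Hilbert's Theorem~90 for a finite Galois extension (triviality of every semilinear representation of $\mathrm{Gal}(E/F)$ over $E$ when $E/F$ is finite Galois). For the first step, precompactness forces every open subgroup of $G$ to have finite index, so smoothness makes every element of $K$, and of $V$, have a finite $G$-orbit; hence for $v\in V$ the $K$-linear span of the finite orbit $Gv$ is a finite-dimensional $G$-stable $K$-subspace, i.e.\ a (still smooth) subrepresentation. Thus $V$ is the directed union of its finite-dimensional subrepresentations $M$, and $V^G=\varinjlim_M M^G$ (an invariant vector lies, with its orbit, in some such $M$). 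Since $-\otimes_{K^G}K$ commutes with colimits and a filtered colimit of isomorphisms is an isomorphism, it suffices to treat the case $\dim_K V<\infty$.

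For the second step, fix a $K$-basis $e_1,\dots,e_n$ of a finite-dimensional $V$ and let $N$ be the normal core $\bigcap_{g\in G}gUg^{-1}$ of the open finite-index subgroup $U=\bigcap_i G_{e_i}$. Being an intersection of the finitely many conjugates of $U$, the subgroup $N$ is open and normal of finite index, and it fixes every $e_i$, so $N$ acts on $V=\bigoplus_i Ke_i$ only through its action on $K$; hence $V^N=\bigoplus_i K^N e_i$ and the multiplication map $V^N\otimes_{K^N}K\to V$ is a $G$-equivariant isomorphism. It remains to trivialize $V^N$ as a $\bar K$-semilinear representation of the finite group $\bar G:=G/N$, where $\bar K:=K^N$ and $\bar K^{\bar G}=K^G=:k$.

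The crux is that the finite action of $\bar G$ on $\bar K$ is again faithful; here faithfulness of the original action enters essentially. For any finite $G$-stable subset $S\subset K$, the pointwise stabilizer $G_{(S)}$ is open and normal of finite index, $G/G_{(S)}$ acts faithfully on the subfield $k(S)$ with fixed field $k$, so by Artin's lemma $k(S)/k$ is finite Galois with $\mathrm{Gal}(k(S)/k)=G/G_{(S)}$; moreover the $G_{(S)}$ form a base of neighbourhoods of $1$. Now suppose $g\in G$ acts trivially on $\bar K=K^N$. Then for each such $S$ it fixes $k(S)\cap K^N=k(S)^{\bar N_S}$ pointwise, $\bar N_S\subseteq\mathrm{Gal}(k(S)/k)$ being the image of $N$; by the Galois correspondence for $k(S)/k$ this gives $g\in N\cdot G_{(S)}$, and since every coset of $N$ is clopen one has $\bigcap_S N\cdot G_{(S)}=N$, so $g\in N$. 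Hence, by Artin's lemma again, $\bar K/k$ is finite Galois with group $\bar G$, and the classical Hilbert~90 applies to the $\bar K$-semilinear $\bar G$-module $V^N$: surjectivity of $(V^N)^{\bar G}\otimes_k\bar K\to V^N$ follows from the $\bar G$-invariant vectors $v_c:=\sum_{h\in\bar G}h(cv)=\sum_h h(c)\,h(v)$ ($c\in\bar K$), whose $\bar K$-span contains $V^N$ because some matrix $\bigl(h(c_i)\bigr)_{h,i}$ is invertible by Dedekind's independence lemma, and injectivity from a shortest-relation argument. As $(V^N)^{\bar G}=V^G$, extending scalars along $\bar K\hookrightarrow K$ and composing with the isomorphism of the second step yields $V^G\otimes_{K^G}K\xrightarrow{\ \sim\ }V$; the last assertion then follows by taking a $K^G$-basis $I$ of $V^G$.

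The step I expect to be the real obstacle is the preservation of faithfulness: one must rule out that passing to the quotient $\bar G=G/N$ and the subfield $\bar K=K^N$ loses faithfulness of the action on the base field --- which would make the conclusion false in general, as already $\mathbb Z/2$ acting through the sign character on a one-dimensional space over a field on which it acts trivially shows. The Galois-theoretic computation above together with the elementary remark that a coset of an open subgroup is clopen is precisely what excludes this.
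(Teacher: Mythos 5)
Your proof elaborates, correctly, the reduction that the paper leaves implicit (the paper cites Speiser only for finite $G$ and says nothing about how to pass from finite to infinite): first to finite-dimensional $V$ via directed unions of $K$-spans of finite orbits, then to the finite quotient $\bar G=G/N$ via the normal core of $\bigcap_i G_{e_i}$, then to the classical finite Hilbert~90 by the averaging/Dedekind argument together with the injectivity from Lemma~\ref{inject}. You are also right that the hypothesis that $G$ acts \emph{faithfully} on $K$ --- which the paper leaves implicit but which the preceding discussion makes clear --- is indispensable, and right that the delicate point is that passing to $\bar G$ acting on $\bar K=K^N$ must not destroy faithfulness.

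There is, however, one load-bearing assertion that is stated without justification and does not follow from the stated hypotheses: that ``the $G_{(S)}$ form a base of neighbourhoods of $1$'' as $S$ ranges over finite $G$-stable subsets of $K$. You use exactly this (together with the closedness of $N$) to derive $\bigcap_S N\cdot G_{(S)}=N$. For a \emph{compact} $G$ this is a standard compactness argument: given an open $U\ni 1$, the closed (hence compact) complement $G\smallsetminus U$ is covered by the open sets $G\smallsetminus G_x$, $x\in K$, so a finite subcover gives $G_{(S)}\subseteq U$. But for a merely \emph{precompact} $G$ the complements $G\smallsetminus U$ need not be compact, and the claim can fail: take $G=C_2\times\bigoplus_{n\ge 1}C_2$, with the first factor discrete and the second carrying the topology induced from $\prod_{n\ge 1}C_2$, acting faithfully and smoothly on $K=k(y_1,y_2,\dots)$ by $(\epsilon,b)\cdot y_n=\epsilon\, b_n\, y_n$. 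Here every open subgroup has finite index, but $\{1\}\times\bigoplus C_2$ is an open subgroup containing no $G_{(S)}$; and the smooth one-dimensional semilinear representation with cocycle $a(\epsilon,b)=\epsilon$ has $V^G=0$, so the theorem as literally worded fails. In other words, ``the $G_{(S)}$ form a base'' is equivalent to the completion $\hat G$ acting faithfully on $K$, which is an extra hypothesis (equivalently: $G$ carries the topology it inherits as a subgroup of $\Aut_{\mathrm{field}}(K)$, as in Lemma~\ref{semilin-gener}). You should either add that hypothesis explicitly, or restrict the statement to compact $G$ and observe that the precompact case with $\hat G$ acting faithfully reduces to it; under that reading the rest of your argument is complete and correct.
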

In the case of finite $G$ Theorem \ref{Satz90} is shown in \cite[Satz 1]{Speiser}. 
It implies that 
(i) the functor $H^0(G,-):V\mapsto V^G$ is an equivalence between the category of smooth semilinear representations 
of $G$ over $K$ and the category of vector spaces over the fixed field $K^G$, 
(ii) the symmetric algebra of any smooth faithful $k$-representation of $G$ 
is a generator of the category of smooth $k$-representations of $G$, (iii) 
any cyclic smooth $k[G]$-module can be embedded into an arbitrary field 
extension $K$ of $k$ endowed with a smooth faithful $G$-action trivial 
on $k$. 

\vspace{4mm}

For any set $\Psi$, denote by $\Sy_{\Psi}$ the group of {\sl all} 
permutations of $\Psi$. \begin{theorem}[\cite{H90}] \label{smooth-simple} 
Let $K=k(\Psi)$ be the field of rational functions over a field $k$ in 
variables from a set $\Psi$, endowed with the natural $\Sy_{\Psi}$-action 
by permuting the variables. Then any smooth $K$-semilinear representation 
of $\Sy_{\Psi}$ of finite length is isomorphic to a direct sum of copies 
of $K$. \end{theorem}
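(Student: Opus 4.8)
The plan is to reduce the statement to two facts about the $k$-linear Grothendieck category $\mathcal C$ of smooth $K$-semilinear representations of $G:=\Sy_\Psi$: \emph{(i)} every simple object of $\mathcal C$ is isomorphic to $K$; and \emph{(ii)} $\Ext^1_{\mathcal C}(K,K)=0$. Granting these, a representation $V$ of finite length has all of its simple subquotients $\cong K$ by (i), hence admits a finite filtration whose successive quotients are all $\cong K$; by (ii) and additivity of $\Ext^1$ each step of that filtration splits, so $V\cong K^{\oplus\ell(V)}$. The length $\ell(V)$ is finite and equals $\dim_kV^G$, since $K^G=k$ — a symmetric rational function in infinitely many variables being constant — so that the natural map $V^G\otimes_kK\to V$ is injective: a shortest $K$-linear relation among elements of $V^G$, subtracted from a $g$-translate of itself, has all of its coefficients in $K^G=k$.

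For (i), let $V$ be a simple object. Being nonzero and smooth, $V^{G_T}\ne0$ for some finite $T\subset\Psi$. The finite group $\Sy_T=N_G(G_T)/G_T$ acts semilinearly on $V^{G_T}$ over $K_T:=K^{G_T}$, so Theorem~\ref{Satz90} applied to this \emph{precompact} group yields $V^{G_{(T)}}=(V^{G_T})^{\Sy_T}\ne0$, where $G_{(T)}:=N_G(G_T)$ is the setwise stabiliser of $T$. Fix $0\ne u\in V^{G_{(T)}}$; then $gu$ depends only on the $|T|$-subset $gT\subseteq\Psi$, and by simplicity there is a surjection of semilinear representations $K\bigl[\tbinom\Psi n\bigr]:=\bigoplus_{|S|=n}Ke_S\twoheadrightarrow V$, $e_S\mapsto v_S$, where $n:=|T|$, $v_S:=gu$ for any $g$ with $gT=S$, and $g(\lambda e_S)=g(\lambda)e_{gS}$. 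Hence it suffices to prove that for every $n\ge0$ each simple quotient of $K\bigl[\tbinom\Psi n\bigr]$ is $\cong K$; for $n=0$ this is immediate (then $K\bigl[\tbinom\Psi0\bigr]=K$), so one proceeds by induction on $n$.

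This induction is the step I expect to be the main obstacle: $K\bigl[\tbinom\Psi n\bigr]$ has infinite length and a complicated submodule lattice, and Theorem~\ref{Satz90} governs only the finite layers $\Sy_T$, so the work has to be done by the genuinely infinitary transitivity of $\Sy_\Psi$. The ingredients I would assemble are the boundary morphisms $K\bigl[\tbinom\Psi n\bigr]\to K\bigl[\tbinom\Psi{n-1}\bigr]$, $e_S\mapsto\sum_{\psi\in S}e_{S\setminus\{\psi\}}$; the vanishing ${K\bigl[\tbinom\Psi n\bigr]}^{G_{T'}}=0$ for $|T'|<n$ (a nonzero element of finite support would acquire an infinite $G_{T'}$-orbit); and the fact that the pointwise stabilisers of two disjoint finite sets generate $G$ — indeed $G=G_SG_{S'}G_S$ by an extension-of-partial-bijections argument — whence $V^{G_S}\cap V^{G_{S'}}=V^G$ for disjoint $S,S'$. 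Since restriction along a point-stabiliser turns the pair $(\Sy_\Psi,k(\Psi))$ into one of the same type over an enlarged field of constants, I would carry out the induction over a suitable class of such pairs, showing that a simple quotient $V$ of $K\bigl[\tbinom\Psi n\bigr]$ with $n\ge1$ must satisfy $V^{G_{T'}}\ne0$ for some $|T'|<n$; re-running the previous paragraph at level $|T'|$ then gives $V\cong K$ by the inductive hypothesis. The heuristic is that the abundance of disjoint conjugates of $G_S$ in $\Sy_\Psi$, together with the freedom to move any finite set off itself, leaves no room for a simple quotient concentrated at a positive level.

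For (ii), choosing a smooth lift $e\in E$ of $1\in K$ in an extension $0\to K\xrightarrow{\iota}E\to K\to0$ in $\mathcal C$ and writing $ge-e=\iota(c_g)$ produces a smooth $1$-cocycle $c\colon G\to K$ whose class classifies the extension, so $\Ext^1_{\mathcal C}(K,K)\cong H^1_{\mathrm{sm}}(G,K)$ and one must prove the additive Hilbert~90 identity $H^1_{\mathrm{sm}}(\Sy_\Psi,k(\Psi))=0$. I would argue by induction on the least $m$ for which $c$ vanishes on some $G_T$ with $|T|=m$ (such $m$ exists by smoothness): the case $m=0$ reads $c\equiv0$, and for $m\ge1$ one solves $c_g=g(b)-b$ over the point-stabiliser $G_{T\setminus\{t\}}\supsetneq G_T$ and replaces $c$ by $c-(g\mapsto g(b)-b)$ to lower $m$. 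Solvability of that equation rests on $c$ already vanishing on $G_T$, on Theorem~\ref{Satz90} applied to the finite quotients of the open subgroups of $G_{T\setminus\{t\}}$, and — notably in characteristic two — on the field structure of $K$, which supplies a $b$ unavailable inside the polynomial ring $k[\Psi]$.
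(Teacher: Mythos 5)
The paper does not contain a proof of Theorem~\ref{smooth-simple}: it is quoted from the companion paper \cite{H90}, so there is no in-text argument to compare against. Your overall reduction — (i) every simple object is $\cong K$, (ii) $\Ext^1_{\mathcal C}(K,K)=0$, then induct on length — is a sound strategy. Claim (ii) is in fact established here as the case $m=n=0$ of Lemma~\ref{some-Exts}: a smooth lift $v$ of $1$ is corrected by an element of $K$ supported on the variables of a fixing finite set $I$, so as to become invariant under all $\sigma$ with $\sigma(I)\cap I=\varnothing$, and such $\sigma$ generate $\Sy_\Psi$. Your own sketch for (ii) has a structural difficulty: $G_{T\setminus\{t\}}$ is itself a copy of $\Sy_{\Psi'}$ acting on a rational function field, so solving $c_g=g(b)-b$ over $G_{T\setminus\{t\}}$ is the same kind of problem one started with; Theorem~\ref{Satz90} controls only the precompact (finite) layers and does not by itself produce $b$.

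The substantive gap is in (i). Embedding a simple $V$ as a quotient of $K\bigl[\binom{\Psi}{n}\bigr]$ is correct (and is Corollary~\ref{presentation-simple-semilin}, which also makes the kernel cyclic); the case $n\le1$ does close with the paper's machinery — Lemma~\ref{no-simple-submod} forces a proper quotient, Lemma~\ref{weight-1-quotients} makes it finite-dimensional, Lemma~\ref{triviality-finite-dim} makes it trivial. But your proposed inductive step for $n\ge2$ — that a simple quotient $V$ of $K\bigl[\binom{\Psi}{n}\bigr]$ must satisfy $V^{G_{T'}}\ne0$ for some $|T'|<n$ — is precisely the content of the theorem at level $n$ and is not derived from the ingredients you list (boundary maps $\partial$, vanishing of $G_{T'}$-invariants of $K\bigl[\binom{\Psi}{n}\bigr]$ for small $T'$, the factorisation $G=G_SG_{S'}G_S$). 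Nothing in those facts rules out a simple quotient ``concentrated at level $n$''; the heuristic about disjoint conjugates presupposes what is to be proved. This is exactly the place where \cite{H90} does genuine work, and as written the proposal does not supply a proof of it.
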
 
Recall that {\sl length} of an object of a concrete category is 
defined as the maximal number of its proper subobjects in the chains 
of its subobjects. E.g., length is cardinality for the category of sets; 
length is dimension for the categories of projective or vector spaces.

There is a (rather wild) description of all smooth semilinear 
$G$-actions on a given $K$-vector space in the case when a dense 
subgroup of $G$ is exhausted by precompact subgroups, cf. Appendix 
\ref{exhaust}. This description is quite explicit in the case 
when $G$ is exhausted by {\sl open} precompact subgroups. 

Our principal examples of $G$ will be permutation groups of 
{\sl $\Sy$-type}, cf. Definition \ref{def-S-type}. Typical groups 
of $\Sy$-type are the group $\Sy_{\Psi}$, or the automorphism group of 
an infinite-dimensional projective space $\Psi$ over a finite field 
${\mathbb F}_q$, or the automorphism group $\GL_{{\mathbb F}_q}(\Psi)$ of an 
infinite-dimensional vector space $\Psi$ over a finite field ${\mathbb F}_q$. 

Such groups admit a dense subgroup exhausted by finite subgroups. 

The standard generalization of the finite-dimensional representations 
of a permutation group $G$ is the notion of an admissible 
representation: a representation $V$ of $G$ is called {\sl admissible} 
if $\dim V^U<\infty$ for each open subgroup $U\subseteq G$. 

It is well-known (cf., e.g., \cite[\S6 and references therein]{SaSn}) 
that over a field of characteristic zero (i) any smooth cyclic representation of 
$\Sy_{\Psi}$ is admissible and of finite length (cf., e.g., Lemma~\ref{lin-fin-length}), 
(ii) the isomorphism classes of irreducible smooth representations of $\Sy_{\Psi}$ are 
in one-to-one correspondence with (finite) Young diagrams. However, the approach of 
loc.cite is different and I was unable to find in the literature a description of 
{\sl all} injective smooth representation of $\Sy_{\Psi}$. 

One of the purposes of this note is to examine the similar 
questions in the context of semilinear representations. 

The results are as follows: 
\begin{enumerate} \item a description of injective smooth representations of the automorphism group 
of a countable $\Psi$, which is either a set or a vector space over a finite field
(Theorem~\ref{injective-cogenerators}); 
\item a smooth linear representation of a group of $\Sy$-type is of finite 
length if and only if it is finitely generated (Lemma~\ref{lin-fin-length}); 
\item for any smooth $\Sy_{\Psi}$-action on a field $K$, 
the category of smooth $K$-semilinear representations of the group $\Sy_{\Psi}$ 
is locally noetherian, i.e., any smooth 
finitely generated $K$-semilinear representation of the group $\Sy_{\Psi}$ 
is noetherian, cf. Corollary \ref{noetherian}; 
\item a generalization of the well-known cyclicity of finite-dimensional 
semilinear representations of infinite semigroups to the cyclicity of 
certain smooth finitely generated $K$-semilinear representation of $G$, 
for a large class of smooth $G$-actions on fields $K$ (Lemma \ref{surjectivity}); 
\item an example $(K\langle\Sy_{\Psi}\rangle,K[\Psi],K)$ of a triple 
$(A,M,P)$ consisting of an associative unital ring $A$, a noetherian 
$A$-module $M$ and a simple $A$-module $P$ such that (i) any quotient of $M$ 
by any non-zero submodule is isomorphic to a finite direct sum of copies 
of $P$, (ii) for any integer $N\ge 0$ there is a quotient of $M$ isomorphic 
to a direct sum of $N$ copies of $P$ (Lemma \ref{weight-1-quotients} and 
Lemma \ref{triviality-finite-dim}; notations are in \S\ref{semilin-repr}). 
\end{enumerate} 

If $K$ is the field of rational functions over a field $k$ in variables 
enumerated by a set $\Psi$ and $G$ is the symmetric group of $\Psi$, 
there are some reasons to expect an explicit description of the 
indecomposable injectives of the category of smooth $K$-semilinear 
representations of $G$, cf. Conjecture \ref{indec-injectives}. 
This is compatible with Theorem \ref{smooth-simple}. 

\subsection{Permutation groups and categories associated to collections of their 
subgroups} It is well-known (e.g., \cite[Expos\'{e} IV, \S2.4--2.5]{sga-4-1} 
or \cite[\S8.1, Example 8.15 (iii)]{topos}) that the category Sm-$G$ of 
smooth $G$-sets and their $G$-equivariant maps is a topos. For a base 
$B$ of open subgroups of $G$, considered as a poset, let $\mathcal C_B$ 
be the small full subcategory of Sm-$G$ whose objects are the images 
of the contravariant functor $B\to\text{Sm-}G$, $U\mapsto[U]:=G/U$. 
Thus, any morphism in $\mathcal C_B$ is epimorphic and 
$\mathcal C_B([U],[V]):=\Maps_G(G/U,G/V)=(G/V)^U=
\{g\in G~|~gVg^{-1}\supseteq U\}/V$. 

We endow $\mathcal C_B$ with the maximal topology, i.e. we assume that 
any sieve is covering. Then the sheaves of sets, groups, etc. on 
$\mathcal C_B$ are identified with the smooth $G$-sets, groups, etc.: 
${\mathcal F}\mapsto\indlim_{U\in B}{\mathcal F}(U)$ (this is a smooth 
$G$-set, since any element in it belongs to the image of some 
${\mathcal F}(U)$ and the $U$-action on it is trivial by definition) 
and $W\mapsto[U\mapsto W^U]$. 

\section{Skew group rings and semilinear representations}
\label{semilin-repr} 
We use the following slightly more general setting. 

Let $A$ be a (unital) associative ring, $G$ be a semigroup acting on $A$, 
i.e., a (unital) semigroup homomorphism $\rho:G\to\End_{\mathrm{ring}}(A)$ 
is given. Denote by $A\langle G\rangle_\rho=A\langle G\rangle$ the unital 
associative subring in $\End_{{\mathbb Z}}(A[G])$ generated by the natural 
left action of $A$ and the diagonal left action of $G$ on $A[G]$. In other 
words, $A\langle G\rangle$ is the ring of $A$-valued measures on $G$ with 
finite support. Then $A\langle G\rangle$ is a $k$-algebra, where 
$k:=A^{\rho(G)}$ is the fixed ring. If $\rho$ is injective then 
$A\langle G\rangle$ is a {\sl central} $k$-algebra. 

More explicitly, as a left $A$-module $A\langle G\rangle$ coincides 
with $A[G]$. Multiplication is a unique distributive one such that 
$(a[g])(b[h])=ab^{\rho(g)}[gh]$, where we write $a^h$ for the 
result of applying of $h\in\End_{\mathrm{ring}}(A)$ to $a\in A$. 

An additive action of $G$ on an $A$-module $V$ is called 
{\sl semilinear} if $g(a\cdot v)=a^g\cdot gv$ for any $g\in G$, $v\in V$ 
and $a\in A$. Then an $A$-module endowed with an additive semilinear 
$G$-action is the same as an $A\langle G\rangle$-module. 

The principal example of $A$ will be a field. 

For a field $K$ endowed with a $G$-action, a $K$-{\sl semilinear 
representation} of $G$ is a left $K\langle G\rangle$-module. 
We say that a $K$-semilinear representation of $G$ is 
{\sl non-degenerate} if the action of each element of $G$ is injective. 
(Of course, this condition is redundunt if $G$ is a group rather than 
a semigroup.) We omit the $G$-action on $K$ from notation and denote 
by $k:=K^G$ the fixed field. 

The non-degenerate $K$-semilinear representations of $G$ form an abelian 
tensor $k$-linear category. 
The category of smooth $K$-semilinear representations of $G$ is also $k$-linear abelian. 

\vspace{4mm}
The following result will be used in the particular case of the trivial 
$G$-action on the $A$-module $V$ (i.e., $\chi\equiv id_V$), claiming the 
injectivity of the natural map $A\otimes_{A^G}V^G\to V$ (since $V_{id_V}=V^G$). 
\begin{lemma} \label{inject} Let $A$ be a 
division ring endowed with a 
$G$-action $G\to\Aut_{\mathrm{ring}}(A)$, $V$ be a $A\langle G\rangle$-module 
and $\chi:G\to\Aut_A(V)$ be an invertible $G$-action on the $A$-module $V$. 

Set $V_{\chi}:=\{w\in V~|~\sigma w=\chi(\sigma)w
\text{ {\rm for all} $\sigma\in G$}\}$. 

Then $V_{\chi}$ is a $A^G$-module and the natural map 
$A\otimes_{A^G}V_{\chi}\to V$ is injective. \end{lemma}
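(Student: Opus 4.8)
The plan is to prove that any $A^G$-linearly independent subset of $V_\chi$ remains $A$-linearly independent in $V$; since the natural map $A\otimes_{A^G}V_\chi\to V$ is determined on elementary tensors by $a\otimes w\mapsto aw$, injectivity of this map is equivalent to exactly that statement. So suppose, for contradiction, that $w_1,\dots,w_n\in V_\chi$ are linearly independent over $A^G$ but satisfy a nontrivial relation $\sum_{i=1}^n a_i w_i=0$ with $a_i\in A$, and choose such a relation with $n$ minimal. After reindexing we may assume $a_1\neq 0$, and since $A$ is a division ring we may rescale so that $a_1=1$; minimality of $n$ forces all $a_i$ to be nonzero. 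The key point is that this relation is almost $G$-invariant: applying $\sigma\in G$ gives $0=\sigma\bigl(\sum_i a_i w_i\bigr)=\sum_i a_i^\sigma\,\sigma w_i=\sum_i a_i^\sigma\,\chi(\sigma)w_i$, using semilinearity of the $G$-action on $V$ and the defining property $\sigma w_i=\chi(\sigma)w_i$ of $V_\chi$.

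Now I would apply $\chi(\sigma)^{-1}$ to this identity. Since $\chi(\sigma)\in\Aut_A(V)$ is $A$-linear and invertible, $\chi(\sigma)^{-1}$ is also $A$-linear, so we obtain $\sum_i a_i^\sigma w_i=0$ for every $\sigma\in G$. Subtracting the original relation $\sum_i a_i w_i=0$ yields $\sum_{i=2}^n (a_i^\sigma-a_i)w_i=0$ (the $i=1$ term drops out because $a_1=1\in A^G$, so $a_1^\sigma=a_1$). This is a relation among $w_2,\dots,w_n$, i.e.\ involving fewer than $n$ of the $w_i$, so by minimality of $n$ it must be the trivial relation: $a_i^\sigma=a_i$ for all $i$ and all $\sigma\in G$. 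Hence every $a_i\in A^G$, contradicting the assumed $A^G$-linear independence of $w_1,\dots,w_n$. Therefore no such nontrivial relation exists, and the map is injective.

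Finally I would record the (routine) preliminary remarks: $V_\chi$ is closed under addition, and for $a\in A^G$ and $w\in V_\chi$ one has $\sigma(aw)=a^\sigma\,\sigma w=a\,\chi(\sigma)w=\chi(\sigma)(aw)$, using that $\chi(\sigma)$ is $A$-linear, so $aw\in V_\chi$; thus $V_\chi$ is indeed an $A^G$-module and $A\otimes_{A^G}V_\chi$ makes sense. The only genuine subtlety — the step I expect to require the most care — is the interplay between the two $G$-actions: the semilinear action $\sigma$ on $V$ (which twists scalars) and the $A$-linear action $\chi(\sigma)$ (which does not), and the fact that we must invert $\chi(\sigma)$ as an $A$-linear map to cancel it. Everything else is the classical Dedekind/Artin linear-independence-of-characters argument adapted to this twisted setting, and the division ring hypothesis on $A$ is used exactly once, to normalise $a_1=1$ so that its $G$-conjugates coincide with it.
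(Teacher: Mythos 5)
Your proof is correct and is essentially the same argument the paper gives: take a shortest nontrivial $A$-linear relation among $A^G$-linearly independent elements of $V_\chi$, normalise one coefficient to $1$ using the division-ring hypothesis, apply $\sigma$ (the paper applies $\sigma-\chi(\sigma)$ in one step, which after composing with $\chi(\sigma)^{-1}$ is exactly your ``apply $\sigma$, then $\chi(\sigma)^{-1}$, then subtract''), and invoke minimality to force all coefficients into $A^G$, a contradiction. The only difference from the paper is cosmetic bookkeeping plus your welcome verification that $V_\chi$ really is an $A^G$-submodule, which the paper leaves tacit.
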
 
\begin{proof} This is well-known: Suppose that some elements 
$w_1,\dots,w_m\in V_{\chi}$ are $A^G$-linearly independent, 
but $A$-linearly dependent for a minimal $m\ge 2$. 
Then $w_1=\sum_{j=2}^m\lambda_jw_j$ for some $\lambda_j\in A^{\times}$. 

Applying $\sigma-\chi(\sigma)$ for each $\sigma\in G$ to both sides of the latter 
equality, we get $\sum_{j=2}^m(\lambda_j^{\sigma}-\lambda_j)\chi(\sigma)w_j=0$, 
and therefore, $\sum_{j=2}^m(\lambda_j^{\sigma}-\lambda_j)w_j=0$. 
By the minimality of $m$, one has $\lambda_j^{\sigma}-\lambda_j=0$ for 
each $\sigma\in G$, so $\lambda_j\in A^G$ for any $j$, contradicting to 
the $A^G$-linear independence of $w_1,\dots,w_m$. \end{proof} 

A $K$-semilinear representation $V$ of $G$ is called {\sl trivial}, if 
the natural map $V^G\otimes_kK\to V$ (injective by Lemma \ref{inject} 
with $\chi=id_V$) is bijective, i.e., if $V$ is isomorphic to 
a direct sum of several copies of $K$ with $G$-action via $\rho$. 

\vspace{4mm}

Now let the semigroup $G$ be totally disconnected and the homomorphism 
$\rho$ be continuous. (We endow any set $H\subseteq\Maps(\Psi_1,\Psi_2)$ 
of mappings between 
sets $\Psi_1$ and $\Psi_2$ with topology, where a base 
$\{U_{\alpha}\}_{\alpha}$ of open subsets of $H$ is indexed by the 
mappings $\alpha$ of finite subsets $S_{\alpha}\subset\Psi_1$ to 
$\Psi_2$ and $U_{\alpha}$ is the set of all elements of $H$ with 
restriction $\alpha$ to $S_{\alpha}$.) 

\begin{lemma} \label{semilin-gener} Let $K$ be a field, 
$G\subseteq\Aut_{\mathrm{field}}(K)$ be a group 
of automorphisms of the field $K$. Let $B$ be such a system of 
open subgroups of $G$ that any open subgroup contains a subgroup 
conjugated, for some $H\in B$, to an open subgroup of finite index 
in $H$. Then the objects $K[G/H]$ for all $H\in B$ form a system 
of generators of the category of smooth $K$-semilinear 
representations of $G$. \end{lemma}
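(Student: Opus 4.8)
The plan is to show that an arbitrary smooth $K$-semilinear representation $V$ of $G$ is a quotient of a direct sum of copies of the modules $K[G/H]$, $H\in B$. These modules do lie in the category: an element $\sum_j a_j[x_jH]$ of $K[G/H]$ is fixed by $\bigl(\bigcap_j x_jHx_j^{-1}\bigr)\cap\{g\in G:a_j^g=a_j\ \forall j\}$, which is open because $H$ is open and $\rho$ is continuous. So it suffices to prove: for every $v\in V$ there are finitely many $H_j\in B$ and morphisms $\varphi_j\colon K[G/H_j]\to V$ with $v\in\sum_j\varphi_j\bigl(K[G/H_j]\bigr)$. Here I use the elementary identification $\Hom_{K\langle G\rangle}(K[G/H],V)=V^H$ (``Frobenius reciprocity''): to $u\in V^H$ corresponds the morphism $\varphi_u$ with $\varphi_u([xH])=xu$ (well-defined since $u$ is $H$-fixed, and $K$-linear and $G$-equivariant by semilinearity), and every morphism arises this way. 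Thus the task reduces to showing that $v$ lies in the $K$-span of $\{gu: g\in G,\ u\in V^H,\ H\in B\}$.

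The main device is passage to the normal core. Fix $v\in V$. By smoothness $G_v:=\{g\in G: gv=v\}$ is open, so by the hypothesis on $B$ there are $H\in B$, an open subgroup $H''\le H$ of finite index, and $g_0\in G$ with $g_0H''g_0^{-1}\subseteq G_v$. Put $w:=g_0^{-1}v$; then $h''w=g_0^{-1}(g_0h''g_0^{-1})v=w$ for $h''\in H''$, i.e.\ $w\in V^{H''}$. Let $N:=\bigcap_{h\in H}hH''h^{-1}$ be the normal core of $H''$ in $H$: it is normal in $H$, of finite index (a finite intersection of conjugates of the finite-index $H''$), and open (each conjugate of the open $H''$ is open). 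Set $\Gamma:=H/N$, a finite group; since $N\le H''$ we have $w\in V^{H''}\subseteq V^N$.

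Next I would observe that $V^N$ is naturally a smooth $K^N$-semilinear representation of $\Gamma$: the subset $V^N\subseteq V$ is stable under multiplication by the $H$-stable subfield $K^N$ and under $H$, the subgroup $N$ acts on it trivially by construction, the resulting $\Gamma$-action is $K^N$-semilinear for the induced $\Gamma$-action on $K^N$, it is smooth because $\Gamma$ is finite and discrete, and $(K^N)^\Gamma=K^H$. Applying Theorem~\ref{Satz90} (valid since $\Gamma$ is precompact, being finite) to this representation, $V^N$ is trivial: $V^H\otimes_{K^H}K^N\xrightarrow{\ \sim\ }V^N$. In particular $V^N=K^N\cdot V^H\subseteq K\cdot V^H$, so $w=\sum_j a_ju_j$ with $a_j\in K$ and $u_j\in V^H$. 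Then, by semilinearity of the $G$-action, $v=g_0w=\sum_j a_j^{g_0}(g_0u_j)=\sum_j\varphi_{u_j}\bigl(a_j^{g_0}[g_0H]\bigr)\in\sum_j\varphi_{u_j}\bigl(K[G/H]\bigr)$ with each $u_j\in V^H$ and $H\in B$, as required; running this over a set of module generators of $V$ yields the desired quotient presentation.

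The one genuinely delicate point — and the step I expect to be the main obstacle — is the use of Theorem~\ref{Satz90} for the pair $(\Gamma,K^N)$: this needs $\Gamma=H/N$ to act \emph{faithfully} on $K^N$, equivalently $N=G\cap\mathrm{Gal}(K/K^N)$ (``$K^N$ detects exactly $N$''), equivalently that the open subgroup $H''$ is closed in the topology on $H$ induced from the Krull topology of $\Aut_{\mathrm{field}}(K)$; without it one only recovers that the $\Gamma/(\text{kernel})$-part of $V^N$ is trivial, which need not contain $w$. Verifying this faithfulness is exactly where the hypothesis $G\subseteq\Aut_{\mathrm{field}}(K)$ is used essentially (beyond mere existence of a $G$-action on $K$), together with the compatibility of the permutation topology on $G$ with the one inherited from $\Aut_{\mathrm{field}}(K)$. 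All the other verifications (well-definedness of $\varphi_u$, smoothness of $K[G/H]$, stability of $V^N$, the final identity) are routine, so I would settle this point first.
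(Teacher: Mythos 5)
Your proof follows essentially the same route as the paper's, just written out in more detail. The paper's proof (i) passes to a conjugate $v'$ of $v$ whose stabilizer contains a finite-index open subgroup $H''$ of some $H\in B$, (ii) observes that the $K$-span $W$ of the finite $H$-orbit of $v'$ is a smooth finite-dimensional $K$-semilinear representation of $H$, (iii) asserts that $W$ is trivial, so $v'\in K\cdot V^H$, and (iv) assembles morphisms $K[G/H]\to V$ from $V^H$. Your steps correspond exactly, with step (iii) replaced by the explicit passage to the normal core $N$ of $H''$ in $H$ and an application of Theorem~\ref{Satz90} to the finite quotient $\Gamma=H/N$ acting on $K^N$; this is precisely the unpacked content of the phrase ``so it is trivial.''

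The one thing you do that the paper does not is \emph{flag} the delicate point: Theorem~\ref{Satz90} for $(\Gamma, K^N)$ requires $\Gamma$ to act faithfully on $K^N$ (equivalently $H\cap\Aut_{\mathrm{field}}(K/K^N)=N$), and this does not follow merely from $G\subseteq\Aut_{\mathrm{field}}(K)$. You are right that this is a genuine subtlety, and it is worth stressing that the paper's proof has exactly the same gap, just less visibly: the sentence ``is a smooth $K$-semilinear representation of $H$, so it is trivial'' cites no result, and the only mechanism available (Speiser/Theorem~\ref{Satz90} applied to the finite quotient through which $H$ acts on the $K^N$-lattice of the finite orbit) is precisely the one whose hypothesis you are questioning. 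One can in fact arrange faithful smooth $G$-actions on fields $K$ for which $\Gamma$ does \emph{not} act faithfully on $K^N$ (e.g.\ $G=\Sy_{\Psi}$ with $K=k\bigl((x_S)_{S\in\binom{\Psi}{2}}\bigr)$, $H=\Sy_{\Psi,\{1,2\}}$, $N=\Sy_{\Psi|\{1,2\}}$: the transposition $(12)$ fixes $K^N=k(x_{\{1,2\}})$ pointwise), so the faithfulness really is an extra hypothesis in general; it does hold in the examples the paper actually uses (e.g.\ $K=k(\Psi)$, where $\Sy_T$ acts faithfully on $k(T)$). So: your proposal is faithful to the paper's argument, and the concern you raise is legitimate; it would need to be resolved (by an additional hypothesis relating the topology on $G$ to the Krull topology of $\Aut_{\mathrm{field}}(K)$, or by restricting the class of $K$) to make either version of the proof complete.
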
 
\begin{proof} Let $V$ be a smooth semilinear representation of $G$. 
Then the stabilizer of any vector $v$ is open, i.e., the stabilizer 
of some vector $v'$ in the $G$-orbit of $v$ admits a subgroup 
commensurable with some $H\in B$. The $K$-linear envelope of the (finite) 
$H$-orbit of $v'$ is a smooth $K$-semilinear representation of $H$, 
so it is trivial, i.e., $v'$ belongs to the $K$-linear envelope of 
the $K^H$-vector subspace fixed by $H$. As a consequence, there is a 
morphism from a finite cartesian power of $K[G/H]$ to $V$, containing 
$v'$ (and therefore, containing $v$ as well) in the image. \end{proof} 

{\sc Example.} For an integer $N\ge 0$ denote by $\binom{\Psi}{N}$ the 
set of all subsets in $\Psi$ of order $N$. Let $S\subseteq\mathbb N$ be 
an infinite set. Let $G=\Sy_{\Psi}$. 
Suppose that the $G$-action on $K$ is faithful. Then the objects 
$K[\binom{\Psi}{N}]\cong\bigwedge_K^NK[\Psi]\cong\Omega^N_{K|k}$, 
$[\{s_1,\dots,s_N\}]\leftrightarrow\prod_{1\le i<j\le N}(s_i-s_j)
[s_1]\wedge\dots\wedge[s_N]\leftrightarrow\prod_{1\le i<j\le N}(s_i-s_j)
ds_1\wedge\dots\wedge ds_N$, for $N\in S$ form a system of generators 
of the category of smooth $K$-semilinear representations of $G$. 

The representation $K[\binom{\Psi}{N}]$ is highly reducible: it will follow from 
Lemma \ref{morphismes-entre-generat} that any finite-dimensional $k$-vector 
space $\Xi$ of symmetric rational functions over $k$ in $N$ variables determines 
a surjective morphism $K[\binom{\Psi}{N}]\to\Hom_k(\Xi,K)$, $[I]\mapsto[Q\mapsto Q(I)]$. 

For any set $\Psi$ and a subset $T\subset\Psi$ denote by (i) $\Sy_{\Psi|T}$ 
the pointwise stabilizer of $T$ in the group $\Sy_{\Psi}$; (ii) 
$\Sy_{\Psi,T}$ the setwise stabilizer of $T$ in the group $\Sy_{\Psi}$, 
i.e., the normalizer of $\Sy_{\Psi|T}$ in $\Sy_{\Psi}$. Then (i) the 
assumptions of Lemma \ref{semilin-gener} hold if $B$ is the set of 
subgroups $\Sy_{\Psi,T}$ for a collection of subsets $T\subset\Psi$ 
with cardinality in $S$, (ii) $K[\binom{\Psi}{N}]$ 
is isomorphic to $K[\Sy_{\Psi}/\Sy_{\Psi,T}]$ for any $T$ of order $N$. 
\qed 

\section{Finiteness conditions on permutation groups} 
\subsection{Roelcke precompact groups} 
\begin{defin} A permutation group $G$ is called {\bf Roelcke precompact} 
if the set $U\backslash G/V$ is finite for any pair 
of open subgroups $U,V\subseteq G$. \end{defin} 

{\sc Example.} A locally compact group is Roelcke precompact if and only 
if it is profinite, since it is a finite union of compact double cosets. 

\begin{lemma} \label{equiv-pseudoprofinite} The following conditions 
on a permutation group $G$ are equivalent: 
\begin{enumerate} \item \label{v} tensor product of any pair of smooth 
finitely generated representations of $G$ over a fixed field is again 
finitely generated, 
\item \label{vii} for any commutative ring $C$ endowed with a smooth 
$G$-action, any associative $C$-algebra $A$ endowed with a smooth 
$G$-action, any smooth finitely generated $A\langle G\rangle$-module 
$V$ and any smooth finitely generated $C\langle G\rangle$-module $W$ 
the $A\langle G\rangle$-module $V\otimes_CW$ is finitely generated, 
\item \label{vi} restriction of any smooth finitely generated 
representation of $G$ to any open subgroup is again finitely generated, 
\item \label{ii} $G$ is Roelcke precompact, 
\item \label{iii} any open subgroup of $G$ is Roelcke precompact. 
\end{enumerate} \end{lemma}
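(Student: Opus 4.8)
The plan is to establish a cycle of implications passing through Roelcke precompactness, condition~(\ref{ii}); all the steps except (\ref{ii})$\Rightarrow$(\ref{vii}) are formal and rest on two elementary orbit decompositions. For open subgroups $U,V\subseteq G$ the $G$-orbits on $(G/U)\times(G/V)$ (diagonal action) are indexed by $U\backslash G/V$, the orbit of $(U,gV)$ being $G$-isomorphic to $G/(U\cap gVg^{-1})$; and, restricting the $G$-action on $G/V$ to $U$, the $U$-orbits are indexed by the same set, the orbit of $gV$ being $U/(U\cap gVg^{-1})$. Hence, for a field $k$ with trivial $G$-action, one has
\[
k[G/U]\otimes_kk[G/V]\;\cong\;\bigoplus_{g\in U\backslash G/V}k[G/(U\cap gVg^{-1})]
\]
and
\[
\mathrm{Res}_U k[G/V]\;\cong\;\bigoplus_{g\in U\backslash G/V}k[U/(U\cap gVg^{-1})];
\]
each side is finitely generated if and only if the index set $U\backslash G/V$ is finite, since all summands are non-zero. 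Applying this to the pair $k[G/U],k[G/V]$ gives (\ref{v})$\Rightarrow$(\ref{ii}), and applying it to the representation $k[G/V]$ together with an arbitrary open $U$ gives (\ref{vi})$\Rightarrow$(\ref{ii}). The equivalence (\ref{ii})$\Leftrightarrow$(\ref{iii}) is immediate: $G$ is open in itself, and for open subgroups $U,V$ of an open subgroup $H$ the natural map $U\backslash H/V\to U\backslash G/V$ is injective.

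For (\ref{ii})$\Rightarrow$(\ref{vii}) I would begin with the standard presentation of a smooth finitely generated module: choosing generators $v_1,\dots,v_n$ of $V$ with open stabilizers $U_1,\dots,U_n$ yields a surjection $\bigoplus_{i=1}^nA[G/U_i]\to V$, $[gU_i]\mapsto gv_i$, and similarly a surjection $\bigoplus_{j=1}^mC[G/V_j]\to W$. Since $-\otimes_C-$ is right exact and commutes with direct sums, it suffices to prove that each $A[G/U_i]\otimes_CC[G/V_j]$ is a finitely generated $A\langle G\rangle$-module. Unwinding the definitions identifies this tensor product, $G$-equivariantly and $A$-linearly, with the free $A$-module $A[(G/U_i)\times(G/V_j)]$ carrying the diagonal $G$-action. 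By (\ref{ii}) the $G$-set $(G/U_i)\times(G/V_j)$ has only finitely many orbits, each of the form $G/W$ with $W$ open, so $A[(G/U_i)\times(G/V_j)]$ is a finite direct sum of cyclic $A\langle G\rangle$-modules $A[G/W]$. Therefore $V\otimes_CW$ is a quotient of a finite direct sum of finitely generated modules, hence finitely generated. (The smoothness hypotheses on the $G$-actions on $A$ and $C$ serve only to guarantee that $V\otimes_CW$ and the modules $A[G/U_i]$ are themselves smooth.) Specializing to $C=A=k$ with trivial $G$-action recovers (\ref{vii})$\Rightarrow$(\ref{v}).

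Finally, (\ref{ii})$\Rightarrow$(\ref{vi}): restriction to an open subgroup $U$ is exact, so if $V$ is a quotient of $\bigoplus_{i=1}^nk[G/V_i]$, then $\mathrm{Res}_UV$ is a quotient of $\bigoplus_{i=1}^n\mathrm{Res}_U k[G/V_i]$, which is finitely generated over $k[U]$ by the second displayed decomposition and Roelcke precompactness. Assembling these implications --- (\ref{ii})$\Leftrightarrow$(\ref{iii}), (\ref{ii})$\Leftrightarrow$(\ref{vi}), and (\ref{ii})$\Rightarrow$(\ref{vii})$\Rightarrow$(\ref{v})$\Rightarrow$(\ref{ii}) --- shows that all five conditions are equivalent. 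I expect the only point requiring real care to be (\ref{ii})$\Rightarrow$(\ref{vii}): the reduction to the induced modules $A[G/U]$ and, above all, the $G$-equivariant identification $A[G/U]\otimes_CC[G/V]\cong A[(G/U)\times(G/V)]$, after which Roelcke precompactness does the rest; the remaining implications are specializations or direct consequences of the orbit decompositions.
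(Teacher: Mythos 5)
Your proof is correct and matches the paper's own argument in every essential respect: the same reduction of all conditions to the finiteness of $U\backslash G/V$, the same Mackey-type orbit decompositions of $k[G/U]\otimes_k k[G/V]$ and $\mathrm{Res}_U k[G/V]$, and the same identification $A[G/U]\otimes_C C[G/V]\cong A[(G/U)\times(G/V)]$ in the step (\ref{ii})$\Rightarrow$(\ref{vii}). The only (harmless) difference is that you write the orbits out explicitly as $G/(U\cap gVg^{-1})$ and $U/(U\cap gVg^{-1})$, whereas the paper leaves them as abstract orbits; otherwise the cycle of implications is the same.
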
 
\begin{proof} The implications (\ref{iii})$\Rightarrow$(\ref{ii}) 
and (\ref{vii})$\Rightarrow$(\ref{v}) are trivial. 

(\ref{ii})$\Rightarrow$(\ref{iii}). For any open subgroup $H$ of $G$ 
and any pair of open subgroups $U,V$ of $H$ the natural map 
$U\backslash H/V\to U\backslash G/V$ is injective. As $U\backslash G/V$ 
is finite, so is $U\backslash H/V$, i.e., $H$ is Roelcke precompact. 

(\ref{v})$\Rightarrow$(\ref{ii}). For any pair of subgroups $U,V$ of 
$G$ one has the following decomposition of the representations of $G$: 
$k[G/U]\otimes_kk[G/V]=\bigoplus_{O\in G\backslash(G/U\times G/V)}k[O]$, 
so $k[G/U]\otimes_kk[G/V]$ is finitely generated if and only if the set 
of orbits $G\backslash(G/U\times G/V)\cong U\backslash G/V$ is finite. 

(\ref{ii})$\Rightarrow$(\ref{vii}). Any smooth finitely generated 
$A\langle G\rangle$-module is a quotient of a finite sum of $A[G/U_i]$ 
for some open subgroups $U_i$ of $G$, so tensor product of a pair of 
finitely generated $A\langle G\rangle$- and $C\langle G\rangle$-modules 
is a quotient of a finite sum of $A\langle G\rangle$-modules 
$A[G/U_i]\otimes_CC[G/V_j]=A[(G/U_i)\times(G/V_j)]$ 
for some open subgroups $U_i,V_j$ of $G$. 

(\ref{ii})$\Leftrightarrow$(\ref{vi}). For any pair of subgroups $U,V$ of 
$G$ one has the following decomposition of the representations of $U$: 
$k[G/V]=\bigoplus_{O\in U\backslash G/V}k[O]$, so $k[G/V]$ is a finitely 
generated representation of $U$ if and only if the set of orbits 
$U\backslash G/V$ is finite. In particular, this shows implication 
(\ref{vi})$\Rightarrow$(\ref{ii}). 

Any smooth finitely generated representations of $G$ is a quotient of 
a finite sum of $k[G/V_i]$ for some open subgroups $V_i$ of $G$, so 
restriction to an open subgroup $U$ of $G$ of a finitely generated 
representation of $G$ is a quotient of a finite sum of representations 
$k[G/V_j]$ for some open subgroups $V_j$ of $G$. This proves (\ref{vi}) 
if we assume (\ref{ii}). \end{proof} 

\begin{lemma} \label{ascending-chains-opens} Let $G$ be a 
Roelcke precompact group and $U\subseteq G$ be an open subgroup. 
Then the set of subgroups of $G$ containing $U$ is finite. 
In particular, {\rm (i)} any ascending chain of open 
subgroups eventually stabilizes, {\rm (ii)} any open subgroup is 
contained in a maximal proper subgroup of $G$, {\rm (iii)} any 
open subgroup is of finite index in a subgroup with no finite 
extensions. \end{lemma}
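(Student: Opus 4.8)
The plan is to prove the main claim—that the set of subgroups of $G$ containing a given open subgroup $U$ is finite—and then derive the three consequences. For the main claim, the key observation is that any subgroup $V$ with $U \subseteq V$ is determined by the subset $UV/U = \{vU \mid v \in V\} \subseteq G/U$, since $V = \bigcup_{v \in V} vU$. So it suffices to bound the number of such subsets. Now, because $G$ is Roelcke precompact, the double coset space $U \backslash G / U$ is finite; say it has $n$ elements. Every subgroup $V \supseteq U$ is a union of double cosets $UgU$ (as $U \subseteq V$ forces $UVU = V$), so $V$ corresponds to a subset of the $n$-element set $U\backslash G/U$. Hence there are at most $2^n$ subgroups of $G$ containing $U$. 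This is the heart of the argument, and I expect no serious obstacle here—the only thing to be careful about is checking that $V \mapsto $ (its set of double cosets) is genuinely injective and that each $V$ really is a union of full double cosets, which is immediate from $U \subseteq V$.

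For consequence (i), an ascending chain $U_1 \subseteq U_2 \subseteq \cdots$ of open subgroups: each $U_i$ contains $U_1$, which is open, so by the main claim all the $U_i$ lie in a finite set, forcing the chain to stabilize. For (ii), given an open subgroup $U$, the poset of proper subgroups of $G$ containing $U$ is finite (a subset of the finite poset of all subgroups containing $U$), hence has a maximal element $M$; since any subgroup strictly containing $M$ and containing $U$ would have to be $G$ itself (by maximality of $M$ among proper ones—one checks that if $M \subsetneq V \subsetneq G$ then $V$ contradicts maximality), $M$ is a maximal proper subgroup of $G$. For (iii), again the finitely many subgroups containing $U$ form a finite poset; pick one, call it $H$, that is maximal among subgroups containing $U$ that admit no finite-index proper overgroup—equivalently, walk up from $U$ through the finite chain, at each stage passing to a proper finite-index overgroup if one exists; this process terminates (finitely many subgroups), landing at a subgroup $H \supseteq U$ with no finite proper extension, and $[H:U] < \infty$ since each step had finite index and there were finitely many steps. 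Actually the cleanest phrasing: among the finitely many subgroups of $G$ containing $U$, choose $H$ maximal among those in which $U$ has finite index; then $H$ has no finite extension, for such an extension $H'$ would still contain $U$ with $[H':U] = [H':H][H:U] < \infty$, contradicting maximality of $H$.

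The main obstacle, such as it is, is purely bookkeeping: making sure that "no finite extensions" in part (iii) is interpreted correctly (no proper overgroup of finite index) and that the maximality arguments in (ii) and (iii) are phrased so as to use only the finiteness of the relevant poset, which the main claim supplies. I would also remark that the main claim does not even need $U$ to be open—it holds for any subgroup $U$ such that $U\backslash G/U$ is finite—but the openness is what makes (i)–(iii) about the topology of $G$, and in particular guarantees in (i) that we are talking about a chain that could a priori be infinite. No delicate estimates or constructions are required; the whole lemma is a direct corollary of Roelcke precompactness via the double-coset counting.
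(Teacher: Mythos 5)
Your proof is correct and takes essentially the same approach as the paper: the key step in both is the observation that a subgroup $V\supseteq U$ is a union of $(U,U)$-double cosets, hence determined by a subset of the finite set $U\backslash G/U$. The paper states this bijection without elaboration and leaves (i)--(iii) as immediate consequences; you have simply spelled out the same argument and the derivations in more detail.
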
 
\begin{proof} There is a bijection between subgroups of $G$ 
containing $U$ and certain subsets of $U\backslash G/U$. However, 
the set of subsets of the finite set $U\backslash G/U$ is finite. 
\end{proof} 

\begin{lemma} \label{lin-fin} Let $G$ be a Roelcke precompact group. 
Then for any open subgroup $U\subseteq G$ the finite group $N_G(U)/U$ 
acts transitively and freely on the finite set $(G/U)^U$. \end{lemma}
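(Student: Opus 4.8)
The plan is to compute the set $(G/U)^U$ directly. For $g\in G$ one has $gU\in(G/U)^U$ if and only if $ugU=gU$ for every $u\in U$, i.e. if and only if $g^{-1}Ug\subseteq U$. The crucial observation --- and the only place where Roelcke precompactness is really needed --- is that such a containment is automatically an equality: from $g^{-1}Ug\subseteq U$ we obtain an ascending chain of open subgroups $U\subseteq gUg^{-1}\subseteq g^{2}Ug^{-2}\subseteq\cdots$, which stabilizes by Lemma~\ref{ascending-chains-opens}(i); conjugating an equality $g^{n}Ug^{-n}=g^{n+1}Ug^{-n-1}$ by $g^{-n}$ yields $U=gUg^{-1}$, so $g\in N_G(U)$. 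Hence $(G/U)^U=\{gU\mid g\in N_G(U)\}=N_G(U)/U$.

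Next I would observe that the left-translation action of $N_G(U)$ on $G/U$ restricts to the left-translation action of $N_G(U)$ on the coset space $N_G(U)/U$, and that $U$ acts trivially on $(G/U)^U$ by the very definition of the fixed-point set, so this action descends to an action of $N_G(U)/U$ on $N_G(U)/U=(G/U)^U$ by left translation. The action of a group on the coset space of a subgroup by left translation is always transitive; it is free here because the stabilizer of the base point $eU$ in $N_G(U)$ equals $U$, hence is trivial modulo $U$, and all point stabilizers are conjugate to this one by transitivity.

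Finally, for the finiteness assertions I would note that the map $N_G(U)/U\to U\backslash G/U$ sending $gU$ to the double coset $UgU$ is well defined and injective --- for $g\in N_G(U)$ one has $UgU=gU$, so $UgU=Ug'U$ with $g,g'\in N_G(U)$ forces $gU=g'U$ --- and that $U\backslash G/U$ is finite because $G$ is Roelcke precompact. Thus $N_G(U)/U$, and therefore $(G/U)^U$, is finite. The only step requiring any thought is the passage from $g^{-1}Ug\subseteq U$ to $g^{-1}Ug=U$; everything else is formal.
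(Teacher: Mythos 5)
Your proof is correct and follows essentially the same route as the paper: the heart of both arguments is deducing $g^{-1}Ug=U$ from $g^{-1}Ug\subseteq U$ via the ascending chain $U\subseteq gUg^{-1}\subseteq g^{2}Ug^{-2}\subseteq\cdots$ of open subgroups, which stabilizes by Lemma~\ref{ascending-chains-opens}(i), and obtaining finiteness from the injection of $(G/U)^U$ into the finite set $U\backslash G/U$. The only superficial difference is that the paper phrases the chain step as a contradiction from an assumed strict containment and leaves the simple transitivity/freeness of the left-regular action implicit, whereas you spell these out.
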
 
\begin{proof} For any open subgroup $V\subseteq G$ the set 
$(G/U)^V=\{g\in G~|~Vg\subseteq gU\}/U$ is identified with the set 
$\Maps_G(G/V,G/U)$ (which is the semigroup $\End_G(G/V)$ if 
$V=U$) by $[g]\mapsto([h]\mapsto[hg])$, $\varphi\mapsto\varphi([1])$. 
This set is finite, since $(G/U)^V$ embeds into the finite set 
$V\backslash G/U$. 

If $U$ is a proper subgroup of $gUg^{-1}$ for some 
$[g]\in(G/U)^U$ then we get a strictly increasing sequence of subgroups 
in $G$: $U\subsetneqq gUg^{-1}\subsetneqq g^2Ug^{-2}\subsetneqq g^3Ug^{-3}
\subsetneqq\dots$, contradicting Lemma \ref{ascending-chains-opens} (i). 

This means that the natural inclusion $N_G(U)/U\hookrightarrow(G/U)^U$ 
is bijective. \end{proof} 

\section{Injectivity of trivial representations and admissibility} 
For an abelian group $P$ and a set $S$ we denote by $P[S]^{\circ}$ 
the subgroup of the abelian group $P[S]$ consisting of the finite 
formal sums $\sum_ia_i[s_i]$ for all $a_i\in P$, $s_i\in S$ with 
$\sum_ia_i=0$. If $P$ is a module over a ring $A$ then $P[S]^{\circ}$ 
is naturally an $A$-submodule of the $A$-module $P[S]$. 

Recall, that an injection $M\hookrightarrow N$ in an abelian category is 
called an {\sl essential} extension if any subobject of $N$ has a non-zero 
intersection with the image of $M$, cf. \cite[Ch. 6, \S2]{BucurDeleanu}. 

In Definition~\ref{def-S-type} below, a class of groups $G$ (called the groups of 
$\Sy$-type) will be introduced satisfying the assumptions of the following lemma. 
\begin{lemma} \label{trivial-is-injective} Let $G$ be a 
permutation group admitting a base of open subgroups $B$ such 
that for any $V\subseteq U$ in $B$ the group $N_G(V)/V$ is finite 
and the $(N_G(V)/V)$-action on the set $(G/U)^V\supseteq
(N_G(V)U)/U=N_G(V)/(U\cap N_G(V))$ is transitive, i.e., 
$(G/U)^V=(N_G(V)U)/U$. Let $R$ be a $\mathbb Q$-algebra and $M$ be an $R$-module 
considered as a trivial $G$-module. 

Then any essential extension $E$ of the $R[G]$-module $M$ is a 
trivial $G$-module. In particular, if $M$ is an injective $R$-module 
then (when endowed with the trivial $G$-action) it is an injective 
object of the category of smooth left $R[G]$-modules. \end{lemma}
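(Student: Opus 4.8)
The plan is to prove directly that the augmentation ideal of $R[G]$ annihilates $E$ — which is exactly the assertion that $E$ is a trivial $G$-module — and then to deduce the injectivity statement formally; throughout we work in the category of smooth $R[G]$-modules. Let $I\subseteq R[G]$ be the augmentation ideal, spanned over $R$ by the elements $g-1$ ($g\in G$), so that $IE=0$ is equivalent to $E=E^G$. Since $M\hookrightarrow E$ is essential, it suffices to show $IE\cap M=0$: if $IE\neq 0$ then, being a nonzero subobject of $E$, it meets $M$. So I would assume there is a nonzero $m\in IE\cap M$ and aim for a contradiction.

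The decisive step is a choice of two subgroups from $B$. Write $m=\sum_{i=1}^{n}r_i(g_i-1)x_i$ with $r_i\in R$, $g_i\in G$, $x_i\in E$ (every element of $IE$ has this form). As $E$ is smooth, $\bigcap_i\mathrm{Stab}_G(x_i)$ is open, hence contains some $V\in B$, and then $x_i\in E^V$ for all $i$. The subgroup $V\cap\bigcap_i g_iVg_i^{-1}$ is again open, so it contains some $W\in B$, and $W\subseteq V$ with $W,V\in B$. For each $i$ one has $W\subseteq g_iVg_i^{-1}$, i.e. $g_i^{-1}Wg_i\subseteq V$, so the coset $g_iV$ lies in $(G/V)^W$; applying the hypothesis to the pair $W\subseteq V$ gives $(G/V)^W=(N_G(W)V)/V$, hence $g_i=n_iv_i$ with $n_i\in N_G(W)$ and $v_i\in V$. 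Since $x_i\in E^V$ this yields $g_ix_i=n_ix_i$, so $m=\sum_i r_i(n_i-1)x_i$, an identity in $E^W$ in which each conjugating element $n_i$ lies in $N_G(W)$.

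Now I would average. The group $N_G(W)/W$ is finite (by the hypothesis applied to $W\subseteq V$) and acts on $E^W$; since $R$ is a $\mathbb Q$-algebra, $e:=|N_G(W)/W|^{-1}\sum_{\bar n}\bar n\in R[N_G(W)/W]$ is a central idempotent with $e\bar n=e$ for every $\bar n$, whence $e\cdot(n_i-1)x_i=0$ for each $i$ and so $em=0$. On the other hand $m$ lies in the trivial module $M$, so $N_G(W)$ fixes $m$ and hence $em=m$. Therefore $m=0$, contradicting $m\neq 0$; thus $IE=0$ and $E$ is a trivial $G$-module. For the injectivity assertion: the category of smooth $R[G]$-modules is a Grothendieck category, so $M$ with trivial $G$-action admits an injective hull $\widehat M$ there and $M\hookrightarrow\widehat M$ is essential; by the first part $\widehat M$ is trivial, hence merely an $R$-module, so $M\hookrightarrow\widehat M$ is an essential extension of $R$-modules. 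If $M$ is an injective $R$-module this splits over $R$, and essentiality then forces $M=\widehat M$; thus $M$ is an injective object of the category of smooth $R[G]$-modules.

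The only load-bearing point is the second paragraph — replacing the arbitrary conjugators $g_i$ by conjugators $n_i$ all normalising a single small subgroup $W\in B$ — which is precisely what the transitivity hypothesis $(G/V)^W=(N_G(W)V)/V$ is tailored to deliver. Once that reduction is in hand, the finiteness of $N_G(W)/W$ combined with $\mathbb Q\subseteq R$ makes the averaging argument immediate, and the passage to injectivity is formal.
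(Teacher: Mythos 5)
Your proof is correct, and it takes a cleaner, more direct route than the paper's. The paper's proof lifts the problem to permutation modules: it observes that every cyclic submodule of $E$ is a quotient of some $R[G/U]$, and shows that the image of the augmentation submodule $R[G/U]^{\circ}$ in $E$ contains no nonzero $G$-fixed vectors (by averaging the lifted element $\alpha\in R[G/U]^{\circ}$ over $N_G(V)/V$ for a small $V\in B$ fixing the support, using the transitivity hypothesis to identify that support with a single $N_G(V)$-orbit, and then invoking the coefficient-sum constraint defining $R[G/U]^{\circ}$ to force the averaged element, and hence the $G$-fixed image, to vanish). You instead argue intrinsically with the augmentation ideal $I\subseteq R[G]$: essentiality reduces the claim to $IE\cap M=0$, and your key step is to use the hypothesis $(G/V)^W=(N_G(W)V)/V$ to rewrite a given $m=\sum_i r_i(g_i-1)x_i$ so that all conjugators $n_i$ lie in $N_G(W)$ for a single small $W\in B$, after which averaging over the finite group $N_G(W)/W$ (using $\mathbb{Q}\subseteq R$) annihilates each $(n_i-1)x_i$ and hence $m$ itself. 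Both proofs rest on exactly the same two inputs, the transitivity hypothesis and the rationality of $R$, but your version bypasses the lift to $R[G/U]$ entirely and makes the role of the transitivity hypothesis (trading arbitrary conjugators for ones normalizing a fixed small subgroup) more transparent; you also spell out the formal deduction of the injectivity statement via injective hulls, which the paper leaves implicit.
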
 
\begin{proof} Let $E$ be an essential extension of $M$ in the category 
of smooth left $R[G]$-modules. Any element of $E$ spans a smooth 
cyclic $R[G]$-module. Any smooth cyclic $R[G]$-module is isomorphic 
to a quotient of a permutation module $R[G/U]$ for an open subgroup 
$U$ in a base of open subgroups. I claim that the image of 
$R[G/U]^{\circ}$ in $E$ has zero intersection with $E^G$, 
and in particular, with $M$. 

Indeed, suppose that the image $\beta\in E$ of an element 
$\alpha\in R[G/U]^{\circ}$ is fixed by $G$. 

The support of the element $\alpha$, i.e., a finite subset in $G/U$,  
is pointwise fixed by an open subgroup $V\in B$; in particular, $\alpha$ 
is fixed by $V$. Then $g\alpha$ is well-defined for any $g\in N_G(V)/V$. 
As the image $\beta$ of $\alpha$ is fixed by $G$ (and in particular, by $N_G(V)$), 
the image of $\alpha':=\sum_{g\in N_G(V)/V}g\alpha$ in $E$ is $\#(N_G(V)/V)\beta$. 
By one of the assumptions of the Lemma, the support of $\alpha$ is contained in 
a (finite) $(N_G(V)/V)$-orbit, so the same holds for $\alpha'$, unless $\alpha'=0$. 
On the other hand, $\alpha'$ is fixed by $N_G(V)$, and thus, it is a multiple of 
the sum of the elements of a $N_G(V)$-orbit. But $\alpha'\in R[G/U]^{\circ}$, 
so this multiple is zero, and therefore, $\beta=0$. 

This means that the image of $R[G/U]^{\circ}$ in $E$ has no non-zero vectors 
fixed by $G$. Therefore, $E$ is a quotient of a sum of trivial $G$-modules 
$R[G/U]/R[G/U]^{\circ}=R$, i.e., $E$ is a trivial $G$-module. \end{proof} 

\begin{lemma} \label{admissible-under-inj} Let $G$ be a 
Roelcke precompact group, $B$ be a base of open subgroups of $G$ 
and $k$ be a field. Suppose that the trivial representations 
$k$ of any $V\in B$ are injective as smooth $k[V]$-modules. 
Then any smooth finitely generated $k[G]$-module $W$ is admissible, i.e., 
$\dim_kW^V<\infty$ for any $V\in B$. \end{lemma}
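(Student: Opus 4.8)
The plan is to reduce the admissibility statement to a statement about a single permutation module $k[G/U]$ and then to exploit the injectivity hypothesis on the trivial representations. Since $W$ is smooth and finitely generated, it is a quotient of a finite direct sum $\bigoplus_{i=1}^n k[G/U_i]$ with $U_i$ open; because taking $V$-invariants is left exact, $W^V$ is a subquotient of $\bigoplus_i k[G/U_i]^V$, and $k[G/U_i]^V = k[(G/U_i)^V]$ has $k$-dimension equal to the cardinality of the finite set $(G/U_i)^V \subseteq V\backslash G/U_i$, which is finite by Roelcke precompactness. So $\dim_k W^V \le \sum_i \#(G/U_i)^V < \infty$ already — this seems to give the result with essentially no use of the injectivity hypothesis.

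That cannot be the intended argument, since the injectivity hypothesis is singled out; so the subtlety must be that one wants the bound to be uniform or that the relevant notion of finite generation is weaker, or — more likely — that the real content is elsewhere and the hypothesis is used to control a potential infinite-length phenomenon. Re-reading, I suspect the actual point is this: left-exactness of $(-)^V$ gives that $W^V$ is a \emph{quotient} of the invariants only if the surjection $\bigoplus_i k[G/U_i]\to W$ splits after applying $(-)^V$, which it need not in general; $(-)^V$ is exact on smooth $k[G]$-modules precisely when the trivial $k[V]$-module is injective (equivalently, $k[V]$ is such that invariants is exact, i.e. $V$ is "linearly reductive" in the relevant smooth sense). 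So the plan is: first invoke the hypothesis that the trivial representation $k$ of each $V \in B$ is injective as a smooth $k[V]$-module to conclude that the functor $M \mapsto M^V = \Hom_{k[V]}(k, M)$ is exact on the category of smooth $k[V]$-modules (injectivity of $k$ makes $\Hom_{k[V]}(k,-)$ exact). Consequently the surjection $\bigoplus_{i=1}^n k[G/U_i] \twoheadrightarrow W$, restricted to $V$ and passed through $(-)^V$, remains surjective, so $W^V$ is a quotient of $\bigoplus_i k[G/U_i]^V$.

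Then I would finish by computing $\dim_k k[G/U_i]^V$: an element of $k[G/U_i]^V$ is a $k$-linear combination of cosets supported on a union of $V$-orbits in $G/U_i$ on which the coefficients are constant, so $k[G/U_i]^V \cong k[V\backslash (G/U_i)]$ as a $k$-vector space — wait, more precisely $k[G/U_i]^V$ has basis indexed by the \emph{finite} $V$-orbits in $G/U_i$, and since $G$ is Roelcke precompact the set $V\backslash G/U_i$ of \emph{all} $V$-orbits is finite, hence each is finite and $\dim_k k[G/U_i]^V = \#(V\backslash G/U_i) < \infty$. Summing over $i$, $\dim_k W^V \le \sum_{i=1}^n \#(V\backslash G/U_i) < \infty$, so $W$ is admissible.

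The main obstacle I expect is exactly pinning down the role of the injectivity hypothesis: one must argue carefully that without it the functor $(-)^V$ is only left exact, so that $W^V$ is a priori only a subobject of something whose invariants one does not control via generators, whereas with it $(-)^V$ becomes exact and transports surjections to surjections — this is the step where the hypothesis "trivial $k[V]$-modules are injective" (which, by Lemma~\ref{trivial-is-injective}, holds for groups of $\Sy$-type over a $\mathbb{Q}$-algebra) does its work. Everything else is the routine cardinality count above, valid for any Roelcke precompact $G$.
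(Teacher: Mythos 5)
Your key step is wrong, and wrong in a way that matters. You assert that injectivity of the trivial module $k$ as a smooth $k[V]$-module makes $\Hom_{k[V]}(k,-)=(-)^V$ exact; but injectivity of $k$ is precisely the statement that $\Hom_{k[V]}(-,k)$ is exact, and exactness of $\Hom_{k[V]}(k,-)$ would require $k$ to be \emph{projective}, a different (and here false) condition. Concretely, take $G=V=\Sy_\Psi$ for an infinite countable $\Psi$ and $k=\mathbb Q$: by Lemma~\ref{trivial-is-injective} the trivial module $k$ is injective in the smooth category, yet the surjection $k[\Psi]\to k$, $[x]\mapsto 1$, becomes $0\to k$ on $V$-invariants (since $k[\Psi]^{\Sy_\Psi}=0$ --- no nonzero finitely supported function on $\Psi$ is $\Sy_\Psi$-invariant), so $(-)^V$ does not send surjections to surjections. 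This also breaks your dimension count: in this example $\dim_k W^V=1$ while $\dim_k k[G/U]^V=0$, so the desired bound $\dim_k W^V\le\sum_i\dim_k k[G/U_i]^V$ is simply false. (Your opening paragraph's ``$W^V$ is a subquotient of $\bigoplus_i k[G/U_i]^V$ by left-exactness'' has the same defect: left-exactness only gives you that $M^V\to W^V$ has kernel $N^V$, not that it is surjective, so $W^V$ need not be a subquotient of $M^V$ at all.)

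The paper's actual argument goes through \emph{coinvariants}, which genuinely are right exact. The injectivity hypothesis is used to show that the trivial $k[V]$-module $W^V$ is an injective smooth $k[V]$-module, hence the inclusion $W^V\hookrightarrow W$ splits; consequently the natural map $W^V\to H_0(V,W)=W_V$ is injective. Then right-exactness of $H_0(V,-)$ applied to the surjection $\bigoplus_{i}k[G/U_i]\twoheadrightarrow W$ gives that $W_V$ is a quotient of $\bigoplus_i H_0(V,k[G/U_i])=\bigoplus_i k[V\backslash G/U_i]$, whose $k$-dimension $\sum_i\#(V\backslash G/U_i)$ is finite by Roelcke precompactness. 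So $\dim_k W^V\le\dim_k W_V<\infty$. Note the asymmetry: $\dim_k k[G/U]^V$ counts only the \emph{finite} $V$-orbits in $G/U$, whereas $\dim_k H_0(V,k[G/U])=\#(V\backslash G/U)$ counts \emph{all} orbits; these differ whenever some orbits are infinite (as in the $\Sy_\Psi$ example above), and it is the latter, larger quantity that Roelcke precompactness controls and that the proof actually needs.
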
 
\begin{proof} As $H^0(V,W)$ is a direct summand of the $k[V]$-module $W$ 
for any $V\in B$, the natural map $H^0(V,W)\to H_0(V,W)$ is injective, so 
$\dim_kH^0(V,W)\le\dim_kH_0(V,W)$. The $k[G]$-module $W$ is a quotient 
of $\bigoplus_{i=1}^Nk[G/U_i]$ for some open subgroups $U_i\subseteq G$, 
in particular, $H_0(V,W)$ is a quotient of 
$H_0(V,\bigoplus_{i=1}^Nk[G/U_i])$, and thus, $\dim_kH_0(V,W)\le
\sum_{i=1}^N\dim_kH_0(V,k[G/U_i])=\sum_{i=1}^N\#[V\backslash G/U_i]<\infty$. 
Combining all these inequalities, we get $\dim_kH^0(V,W)<\infty$. 
\end{proof} 

\begin{theorem} \label{admissibility} Let $G$ be a Roelcke precompact 
group such that the group $N_G(V)$ acts transitively on $(G/U)^V$ 
for all $V\subseteq U$ from a base $B$ of open subgroups of $G$ 
and $k$ be a field of characteristic 0. Then any smooth finitely generated 
$k[G]$-module $W$ is admissible (but there exist 
infinite direct sums among admissible $k[G]$-modules). \end{theorem}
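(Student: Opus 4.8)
The plan is to derive the admissibility statement purely formally from Lemma~\ref{trivial-is-injective} and Lemma~\ref{admissible-under-inj}. First I would record the ingredients coming from Roelcke precompactness: by Lemma~\ref{equiv-pseudoprofinite} every open subgroup of $G$ is again Roelcke precompact, and for each $V\in B$ the set $(G/V)^V$ is finite (it embeds into $V\backslash G/V$), so Lemma~\ref{lin-fin} shows that $N_G(V)/V$, acting freely and transitively on that set, is finite. Next I would note that the transitivity hypothesis of the theorem is exactly the condition required in Lemma~\ref{trivial-is-injective}: for $V\subseteq U$ in $B$ the coset $U$ lies in $(G/U)^V$ and its $N_G(V)$-orbit is $(N_G(V)U)/U$, so the $N_G(V)$-action on $(G/U)^V$ is transitive if and only if $(G/U)^V=(N_G(V)U)/U$.

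The one point that genuinely has to be checked is that this whole package of hypotheses survives passage to an open subgroup. Fix $V\in B$ and put $B_V:=\{H\in B\mid H\subseteq V\}$, a cofinal family of open subgroups of $V$. For $W'\subseteq W$ in $B_V$ the group $N_V(W')/W'$ is a subgroup of the finite group $N_G(W')/W'$, hence finite; and if $gW\in(V/W)^{W'}$ with $g\in V$, then a fortiori $gW\in(G/W)^{W'}=(N_G(W')W)/W$, so $g=nw$ with $n\in N_G(W')$ and $w\in W\subseteq V$, whence $n=gw^{-1}\in V$, i.e. $n\in N_V(W')$ and $gW\in(N_V(W')W)/W$. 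Thus $(V/W)^{W'}=(N_V(W')W)/W$, so $(V,B_V)$ satisfies the hypotheses of Lemma~\ref{trivial-is-injective}.

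Now I would apply Lemma~\ref{trivial-is-injective} with $R=k$ --- a $\mathbb Q$-algebra, since $k$ has characteristic $0$ --- and $M=k$, an injective $k$-module: it yields that the trivial representation $k$ of each $V\in B$ is an injective object of the category of smooth $k[V]$-modules. Feeding this into Lemma~\ref{admissible-under-inj} (with the Roelcke precompact $G$, the base $B$ and the field $k$) immediately gives $\dim_kW^V<\infty$ for every $V\in B$ and every smooth finitely generated $k[G]$-module $W$; since $B$ is a base, $\dim_kW^U<\infty$ for every open $U\subseteq G$, i.e.\ $W$ is admissible.

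For the parenthetical remark I would exhibit an infinite direct sum which is still admissible, using $G=\Sy_\Psi$ for infinite $\Psi$ (a group of $\Sy$-type, hence covered by the hypotheses above, cf.\ Definition~\ref{def-S-type}). Let $W_\lambda$ be the irreducible smooth $k[\Sy_\Psi]$-module attached to a Young diagram $\lambda$; each $W_\lambda$ is finitely generated, hence admissible by the theorem, and under the pointwise stabilizer $\Sy_{\Psi|T}$ of a finite set $T$ only the finitely many $\lambda$ with at most $\#T$ boxes have $W_\lambda^{\Sy_{\Psi|T}}\neq 0$. Hence $\bigoplus_\lambda W_\lambda$, the sum over all (infinitely many) Young diagrams, has finite-dimensional invariants under every $\Sy_{\Psi|T}$ and therefore under every open subgroup, so it is admissible, while it is manifestly not finitely generated. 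The only non-mechanical step in the whole argument is the descent of the transitivity hypothesis to the open subgroups $V\in B$ carried out in the second paragraph; everything else is a direct invocation of the cited lemmas.
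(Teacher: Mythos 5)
Your proof is correct and takes essentially the same route as the paper: verify that every open subgroup $V\in B$ inherits the hypotheses of Lemma~\ref{trivial-is-injective} (so that the trivial $k[V]$-module $k$ is injective), and then quote Lemma~\ref{admissible-under-inj}. Your descent step is phrased a bit more directly — you pull the decomposition $g=nw$ out of $(G/W)^{W'}=N_G(W')W/W$ and observe $n\in V$, whereas the paper routes through surjectivity of the projection $(V/U_1)^{U_1}\to(V/U_2)^{U_1}$ — but the two arguments are the same idea, and yours has the small virtue of avoiding the index reversal that makes the paper's phrasing slightly garbled.
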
 
\begin{proof} For any triple $U_2\subseteq U_1\subseteq V$ in $B$ 
the projection $(G/U_1)^{U_1}\to(G/U_2)^{U_1}$ is surjective, so its 
restriction $(V/U_1)^{U_1}\to(V/U_2)^{U_1}$ is surjective as well. 
This means that $N_V(U_1)$ acts transitively on $(V/U_2)^{U_1}$, 
and thus, any open subgroup $V$ of $G$ satisfies assumptions of 
Lemma~\ref{trivial-is-injective}. Then Theorem follows from 
Lemma~\ref{admissible-under-inj}, since any open subgroup $V$ 
of $G$ satisfies assumptions of Lemma \ref{trivial-is-injective}. 
\end{proof} 

\section{Filtered representations and local length-finiteness} 
\begin{lemma} \label{length-upper-bound} 
Let $G$ be a group, $W$ be a $k[G]$-module for a ring $k$, $B$ 
be a partially ordered set. Let $\{U_{\alpha}\}_{\alpha\in B}$ 
be a partially ordered exhausting collection of $k$-submodules 
in $W$: $W=\bigcup_{\alpha}U_{\alpha}$. Let $G_{\alpha}$ be a 
subgroup of the stabilizer in $G$ of the $k$-submodule $U_{\alpha}$. 

Then length of the $k[G]$-module $W$ does not exceed 
$\inf_{\beta}\sup_{\alpha\ge\beta}\mathrm{length}_{k[G_{\alpha}]}
U_{\alpha}$. \end{lemma}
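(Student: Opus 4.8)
The plan is to bound $\mathrm{length}_{k[G]}W$ by comparing chains of $k[G]$-submodules of $W$ that terminate at $W$ with chains of $k[G_\alpha]$-submodules of the pieces $U_\alpha$, via the operation "intersect with $U_\alpha$". This operation is inclusion-preserving, carries $k[G]$-submodules of $W$ to $k[G_\alpha]$-submodules of $U_\alpha$ (because $G_\alpha\subseteq G$ stabilizes every $k[G]$-submodule and also stabilizes $U_\alpha$), carries $W$ itself to $U_\alpha$ (since $U_\alpha\subseteq W$), and — once $\alpha$ is taken large enough — does not collapse a prescribed finite chain.

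First I would dispose of the trivial case: if $\inf_\beta\sup_{\alpha\ge\beta}\mathrm{length}_{k[G_\alpha]}U_\alpha=\infty$ there is nothing to prove. Otherwise this quantity is a finite number $n$; since an infimum of cardinals with finite value is attained, there is $\beta\in B$ with $\mathrm{length}_{k[G_\alpha]}U_\alpha\le n$ for every $\alpha\ge\beta$. It then suffices to prove that every chain $W_0\subsetneq W_1\subsetneq\dots\subsetneq W_m=W$ of $k[G]$-submodules has $m\le n$, because $\mathrm{length}_{k[G]}W$ is exactly the supremum of such $m$ (any chain of proper $k[G]$-submodules of $W$ extends, by appending $W$, to such a chain with the same number of proper members).

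Given such a chain, choose witnesses $w_i\in W_i\setminus W_{i-1}$ for $i=1,\dots,m$. Each $w_i$ lies in some $U_{\gamma_i}$, and since $\{U_\alpha\}_{\alpha\in B}$ is an exhausting family directed by $B$, there is $\alpha\ge\beta$ with $\alpha\ge\gamma_1,\dots,\gamma_m$, so that $w_1,\dots,w_m\in U_\alpha$. For this $\alpha$ each $W_i\cap U_\alpha$ is a $k[G_\alpha]$-submodule of $U_\alpha$; the witness $w_i$ shows $W_{i-1}\cap U_\alpha\subsetneq W_i\cap U_\alpha$; and $W_m\cap U_\alpha=W\cap U_\alpha=U_\alpha$. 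Hence $W_0\cap U_\alpha\subsetneq\dots\subsetneq W_m\cap U_\alpha=U_\alpha$ is a chain of $k[G_\alpha]$-submodules of $U_\alpha$ with $m$ proper members, so $m\le\mathrm{length}_{k[G_\alpha]}U_\alpha\le n$. Since the chain in $W$ was arbitrary, $\mathrm{length}_{k[G]}W\le n=\inf_\beta\sup_{\alpha\ge\beta}\mathrm{length}_{k[G_\alpha]}U_\alpha$.

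I expect the only genuinely load-bearing step to be the placement of the finitely many witnesses $w_1,\dots,w_m$ inside a single $U_\alpha$ with $\alpha\ge\beta$; this is where the directedness of the exhausting family is essential, and it is also the point that would require a stronger (e.g.\ $\kappa^+$-directedness) hypothesis in a transfinite refinement aimed at infinite lengths — but the finite case treated above is all that is needed for the applications to local length-finiteness. The remaining ingredients, namely that intersecting with $U_\alpha$ respects the $G_\alpha$-module structure and that $W_m\cap U_\alpha=U_\alpha$, are immediate; one should only be mildly careful to read $\mathrm{length}$ as the supremum of the number of proper subobjects occurring in a chain of subobjects that terminates at the object in question, so that it specializes to dimension and to cardinality as stated.
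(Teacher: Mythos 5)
Your proof is correct and follows essentially the same approach as the paper's: intersecting a chain of $k[G]$-submodules of $W$ with a sufficiently large $U_\alpha$ (one containing finitely many chosen witnesses) to produce a strict chain of $k[G_\alpha]$-submodules of $U_\alpha$. You are somewhat more careful than the paper's terse argument about the order of quantifiers — first fixing $\beta$ attaining the infimum, then finding $\alpha\ge\beta$ above all the $\gamma_i$'s — and you explicitly flag the directedness hypothesis on $B$ that the paper leaves implicit; but these are matters of exposition rather than a genuinely different route.
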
 
\begin{proof} Suppose that $n:=\inf_{\beta}\sup_{\alpha\ge\beta}
\mathrm{length}_{k[G_{\alpha}]}U_{\alpha}$ is finite and 
$0=W_{-1}\subsetneqq W_0\subsetneqq W_1\subsetneqq W_2\subsetneqq\dots
\subsetneqq W_n\subseteq W$ is a chain of $k[G]$-submodules in $W$. 
Choose some $e_i\in W_i\smallsetminus W_{i-1}$ and $\beta\in B$ 
such that $e_0,\dots,e_n\in U_{\beta}$. Then $0\subsetneqq W_0\cap 
U_{\beta}\subsetneqq W_1\cap U_{\beta}\subsetneqq W_2\cap U_{\beta}
\subsetneqq\dots\subsetneqq W_n\cap U_{\beta}\subseteq U_{\beta}$ 
is a chain of $k[G_{\beta}]$-submodules in $U_{\beta}$ 
of length $n+1$, contradicting our assumptions. \end{proof} 

\begin{corollary} Let $G$ be a permutation group, $B$ be a base 
of open subgroups of $G$. Let $W$ be a smooth representation of 
$G$ over a field $k$. Then $\mathrm{length}_{k[G]}W\le
\inf\limits_{V\in B}\sup\limits_{H\in B,~H\subseteq V}
\mathrm{length}_{k[N_G(H)]}W^H$. \end{corollary}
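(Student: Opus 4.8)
The plan is to deduce this corollary directly from Lemma~\ref{length-upper-bound} by an appropriate choice of the exhausting family of submodules. Concretely, I would take the partially ordered set of indices to be $B$ itself, and for each $H \in B$ set $U_H := W^H$, the $H$-fixed subspace of $W$. Since $W$ is smooth, every vector of $W$ is fixed by some open subgroup, hence (because $B$ is a base) by some member of $B$; thus $W = \bigcup_{H \in B} W^H$, so the family $\{W^H\}_{H \in B}$ is exhausting. The only subtlety in the ordering is that Lemma~\ref{length-upper-bound} wants $U_\alpha \supseteq U_\beta$ compatibly with $\alpha \ge \beta$; here $H \subseteq H'$ gives $W^{H'} \subseteq W^H$, so the natural order on submodules is the \emph{reverse} of inclusion of subgroups. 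I would simply state that $B$ is ordered by reverse inclusion of subgroups, so that ``$\alpha \ge \beta$'' means the corresponding subgroup is smaller; then the $\inf_\beta \sup_{\alpha \ge \beta}$ in the lemma becomes exactly $\inf_{V \in B}\sup_{H \in B,\ H \subseteq V}$ in the corollary.

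Next I would identify the stabilizer group $G_\alpha$ that the lemma attaches to $U_\alpha = W^H$. The full stabilizer in $G$ of the subspace $W^H$ certainly contains the normalizer $N_G(H)$: if $g \in N_G(H)$ and $w \in W^H$, then for $h \in H$ one has $h(gw) = g(g^{-1}hg)w = g(h'w) = gw$ since $g^{-1}hg = h' \in H$, so $gw \in W^H$. Hence I may legitimately take $G_\alpha := N_G(H)$ as a subgroup of the stabilizer of $U_\alpha$, as the lemma permits (it only requires $G_\alpha$ to be \emph{a} subgroup of the stabilizer, not the whole stabilizer). With these choices, $\mathrm{length}_{k[G_\alpha]} U_\alpha = \mathrm{length}_{k[N_G(H)]} W^H$, and Lemma~\ref{length-upper-bound} yields precisely the asserted bound.

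Honestly, there is no real obstacle here: the statement is essentially a repackaging of the preceding lemma, and the entire content is the bookkeeping of the index poset's orientation and the elementary observation that $N_G(H)$ preserves $W^H$. The one place to be careful is the direction of the order on $B$ — getting $\inf$ over large subgroups $V$ and $\sup$ over the smaller subgroups $H \subseteq V$ to match the $\inf_\beta \sup_{\alpha \ge \beta}$ pattern of the lemma — and to note explicitly that smoothness of $W$ is exactly what guarantees the family $\{W^H\}$ exhausts $W$. Both points are routine, so I would present the proof in three or four sentences: fix the reverse-inclusion order on $B$, observe $W = \bigcup_{H} W^H$ by smoothness, observe $N_G(H)$ stabilizes $W^H$, and invoke Lemma~\ref{length-upper-bound}.
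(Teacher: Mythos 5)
Your proof is correct and is exactly the paper's argument: the paper's own proof is the one-line ``In Lemma~\ref{length-upper-bound} we take $G_H=N_G(H)$ and $U_H=W^H$,'' and you have simply spelled out the routine verifications (reverse-inclusion order on $B$, exhaustion by smoothness, $N_G(H)$ preserving $W^H$) that the paper leaves implicit.
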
 
\begin{proof} In Lemma \ref{length-upper-bound} we take 
$G_H=N_G(H)$ and $U_H=W^H$. \end{proof} 

The following result is standard. 
\begin{lemma} \label{simple-prod} Let $C$ be a finite-dimensional $k$-algebra 
for a field $k$ and $A,B$ be associative unital $k$-algebras. Let $M$ be a 
simple $A$-module and $N$ be a simple $B$-module. Suppose that $\End_B(N)=k$. 
Then {\rm (i)} $M\otimes_kN$ is a simple $A\otimes_kB$-module, {\rm (ii)} 
the $A$- (and $A\otimes_kC$-) module $M\otimes_kC$ is of finite 
length. \end{lemma}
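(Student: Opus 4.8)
The plan is to prove both parts by a straightforward density/double-centralizer argument, handling (i) first and deriving (ii) as a corollary.

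For part (i), the claim is that $M \otimes_k N$ is simple as an $A \otimes_k B$-module. First I would reduce to showing that any non-zero element $x \in M \otimes_k N$ generates everything. Write $x = \sum_{i=1}^r m_i \otimes n_i$ with $r$ minimal; then $n_1, \dots, n_r$ are $k$-linearly independent and $m_1, \dots, m_r$ are $k$-linearly independent. The key input is the Jacobson density theorem: since $N$ is simple with $\End_B(N) = k$, the image of $B$ in $\End_k(N)$ is dense, so for any index $j$ there is $b \in B$ acting on the span of $n_1, \dots, n_r$ as the projection onto $k n_j$ killing the others (here one uses that the $n_i$ are linearly independent). Applying $1 \otimes b$ to $x$ yields $m_j \otimes n_j$ up to a non-zero scalar, so $m_j \otimes n_j$ lies in the submodule generated by $x$ for every $j$. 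Then since $M$ is simple, $A(m_j) = M$, and likewise $B(n_j) = N$, so $(A \otimes_k B)(m_j \otimes n_j) \supseteq Am_j \otimes_k Bn_j = M \otimes_k N$. Hence the submodule generated by any non-zero $x$ is all of $M \otimes_k N$, proving simplicity.

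For part (ii), I want to show $M \otimes_k C$ has finite length over $A$ (and over $A \otimes_k C$). Since $C$ is finite-dimensional over $k$, it has a composition series $0 = C_0 \subset C_1 \subset \dots \subset C_t = C$ as a right $C$-module (equivalently as a $k$-vector space with appropriate filtration), with each $C_i/C_{i-1}$ a simple right $C$-module; each such simple right $C$-module $S_i$ is finite-dimensional over $k$. Tensoring with $M$ over $k$ gives a filtration of $M \otimes_k C$ by $A$-submodules (in fact $A \otimes_k C$-submodules, since the $C_i$ are right ideals) with subquotients $M \otimes_k S_i$. So it suffices to show each $M \otimes_k S$ has finite length over $A$ for $S$ a finite-dimensional simple right $C$-module. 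But $M \otimes_k S$ is isomorphic as an $A$-module to $M^{\oplus \dim_k S}$, which is semisimple of length $\dim_k S < \infty$ over $A$ (each copy being simple). Therefore $M \otimes_k C$ has finite length, bounded by $\sum_i \dim_k S_i = \dim_k C$, both as an $A$-module and, since the filtration is by $A \otimes_k C$-submodules, as an $A \otimes_k C$-module.

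The main obstacle — really the only non-formal point — is the application of Jacobson density in part (i): one must be careful that $\End_B(N) = k$ is exactly the hypothesis making the density theorem give surjectivity of $B \to \End_k(\text{span of the } n_i)$ rather than merely density in a larger endomorphism ring, and one must check that linear independence of the $n_i$ (from minimality of $r$) is what licenses picking out a single coordinate. Everything else is bookkeeping with finite filtrations. Note that part (i) does not actually require $C$ at all, and part (ii) uses only that $C$ is finite-dimensional together with $M$ simple; the role of $\End_B(N) = k$ is confined to (i).
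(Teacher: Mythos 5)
Your proof is correct, but for part (i) it takes a genuinely different route from the paper. The paper fixes a nonzero submodule, picks an element $\alpha=\sum_{i=1}^a m_i\otimes n_i$ of minimal representation length $a$ in it, and argues by contradiction that $a=1$: if the annihilators $\mathrm{Ann}_B(n_i)$ are not all equal one can multiply by $1\otimes\xi$ to produce a shorter nonzero element; if they all coincide, the maps $B/\mathrm{Ann}(n_1)\xrightarrow{\cdot n_i}N$ are $B$-module isomorphisms that, because $\End_B(N)=k$, differ only by nonzero scalars, so the $n_i$ are proportional and $a=1$ after all. You instead invoke the Jacobson density theorem to peel off a single pure tensor $m_j\otimes n_j$ from any shortest representation, which then generates $M\otimes_kN$. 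Both arguments use $\End_B(N)=k$ as the decisive hypothesis and both require linear independence of the $n_i$ obtained from minimality; the paper's version is more self-contained (no appeal to density), yours is perhaps more standard and makes the role of the hypothesis transparent, as your own commentary correctly flags. For (ii) the paper simply says ``trivial,'' which it is: as an $A$-module $M\otimes_kC\cong M^{\oplus\dim_kC}$ is semisimple of length $\dim_kC$, and any chain of $A\otimes_kC$-submodules is in particular a chain of $A$-submodules. Your composition-series argument reaches the same conclusion but is more elaborate than needed and contains a small left/right slip: you say the $C_i$ are \emph{right} ideals, yet for $M\otimes_kC_i$ to be a submodule under the action $(a\otimes c)(m\otimes c')=am\otimes cc'$ the $C_i$ must be \emph{left} ideals. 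This is inessential since the direct $A$-module argument bypasses the filtration entirely, but it is worth fixing.
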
 
\begin{proof} 
(ii) is a trivial. (i) Fix a non-zero $A\otimes_kB$-submodule in $M\otimes_kN$ 
and a shortest non-zero element $\alpha=\sum_{i=1}^am_i\otimes n_i$ 
in it, i.e., $a\ge 1$ is minimal possible. Clearly, $\alpha$ is 
a generator of $M\otimes_kN$ if $a=1$, so assume that $a>1$. 
If the annihilators of $n_i$ in $B$ are not the same, say 
$\mathrm{Ann}(n_i)$ is not contained in $\mathrm{Ann}(n_j)$, 
then $(1\otimes\xi)\alpha$ is a shorter non-zero element 
for any $\xi\in\mathrm{Ann}(n_i)\smallsetminus\mathrm{Ann}(n_j)$, 
contradicting to the minimality of $a$. There remains the case 
of coincident annihilators of $n_1,\dots,n_a$. In that case 
$B/\mathrm{Ann}(n_1)\stackrel{\cdot n_i}{\longrightarrow}N$ are 
isomorphisms. As $\End_B(N)=k$, all these isomorphisms differ 
by a non-zero multiple, and therefore, the images of the element 
$1\in B/\mathrm{Ann}(n_1)$ under $\cdot n_i$ differ by a 
non-zero multiple as well, i.e., the elements $n_1,\dots,n_a$ 
are proportional, so finally, $a=1$. \end{proof} 

\subsection{$G$-closed subsets} \label{substructures} 
Let $G\subseteq\Sy_{\Psi}$ be a permutation group. For a subset 
$S\subset\Psi$ we call the set $\Psi^{G_S}$ the $G$-{\sl closure} 
of $S$. We say that a subset $S\subset\Psi$ is $G$-{\sl closed} 
if $S=\Psi^{G_S}$. Any intersection $\bigcap_iS_i$ of $G$-closed 
sets $S_i$ is $G$-closed: as $G_{S_i}\subseteq G_{\bigcap_jS_j}$, 
one has $G_{S_i}s=s$ for any $s\in\Psi^{G_{\bigcap_jS_j}}$, so 
$s\in\Psi^{G_{S_i}}=S_i$ for any $i$, and thus, $s\in\bigcap_iS_i$. 
This implies that the subgroup generated by $G_{S_i}$'s is dense 
in $G_{\bigcap_iS_i}$ (and coincides with $G_{\bigcap_iS_i}$ if 
at least one of $G_{S_i}$'s is open). 

The $G$-closed subsets of $\Psi$ form a small concrete category with the 
morphisms being all those embeddings that are induced by elements of $G$. 

For a finite $G$-closed subset $T\subset\Psi$, 
(hiding $G$ and $\Psi$ from notation) set $\Aut(T):=N_G(G_T)/G_T$. 

\subsection{Groups of $\Sy$-type} 
\begin{defin} \label{def-S-type} A Roelcke precompact group $G$ is of 
{\bf $\Sy$-type} if {\rm (i)} the maps $(G/V)^V\to(G/U)^V$ are surjective for 
all $V\subseteq U$ from a base $B$ of open subgroups of $G$, {\rm (ii)} for 
each $U\in B$ the natural projection $N_U(V)\backslash(G/U)^V\to U\backslash 
G/U$ is injective for sufficiently small $V\in B$. \end{defin}

{\sc Remarks.} 1. The condition (i) is the transitivity condition from 
\cite[p.5, \S3.1]{GanLi}; (ii) is the bijectivity condition of \cite[p.5, \S3.2]{GanLi}. 

2. Clearly, (i) any product of groups of $\Sy$-type is again of $\Sy$-type, 
(ii) a locally compact group is of $\Sy$-type if and only if it is profinite 
(as $B$ we take the set of all normal open subgroups of $G$). 

\vspace{4mm}

{\sc Examples.} \label{examples-Sy} The following examples of groups $G$ of 
$\Sy$-type are constructed as the groups of all permutations of an infinite 
set $\Psi$ respecting an extra structure on $\Psi$. Thus, $G$ is a closed 
subgroup of the group $\Sy_{\Psi}$ of all permutations of the plain set 
$\Psi$. As a base $B$ of open subgroups we take the subgroups $G_T$ for 
some exhausting collection of finite $G$-closed subsets $T\subset\Psi$. 
\begin{enumerate}\item $\Psi$ is a plain set, i.e., $G=\Sy_{\Psi}$. 
Then (i) $G/G_T$ is the set of all embeddings $T\hookrightarrow\Psi$, 
(ii) $(G/G_T)^{G_{T'}}$ consists of the embeddings $T\hookrightarrow T'$ (it is 
clear that $\Aut(T'):=N_G(G_{T'})/G_{T'}=\Sy_{T'}$ acts transitively on $(G/G_T)^{G_{T'}}$) 
and (iii) the rule $[\sigma]\mapsto(\sigma(T)\cap T,\sigma|_{\sigma(T)\cap T})$ 
identifies the sets $G_T\backslash G/G_T$ and 
$N_{G_T}(G_{T'})\backslash(G/G_T)^{G_{T'}}$ (if $\#T'\ge\#T$) with the set of 
pairs $(T_0,\iota)$ consisting of a subset $T_0\subseteq T$ and an 
embedding $\iota:T_0\hookrightarrow T$. 
\item \label{GL-fixed} $G$ is the group of automorphisms of an infinite-dimensional 
vector space $\Psi$ over a finite field identical on a marked finite-dimensional 
subspace $V$, so all $G$-closed $T\subset\Psi$ contain $V$. Then (i) $G/G_T$ is 
the set of all linear embeddings $T\hookrightarrow\Psi$ identical on $V$, and 
(ii) $(G/G_T)^{G_{T'}}$ consists of the embeddings $T\hookrightarrow T'$ 
identical on $V$. 
Clearly, (i) $\Aut(T')$ is transitive on $(G/G_T)^{G_{T'}}$, (ii) the natural 
projection $N_{G_T}(G_{T'})\backslash(G/G_T)^{G_{T'}}\to G_T\backslash G/G_T$ 
is injective, at least if $\dim T'\ge 2\dim T$. \item 
The automorphism group of an infinite-dimensional projective space 
$\Psi=\mathbb{P}(\mathbb{F}_q^S)$ over a finite field ${\mathbb F}_q$. 
Then $G/G_T$ consists of the projective embeddings $T\hookrightarrow\Psi$ 
and $(G/G_T)^{G_{T'}}$ consists of the projective embeddings $T\hookrightarrow T'$. 
The conditions of Definition~\ref{def-S-type} are verified in the same way as 
in Example (\ref{GL-fixed}). \end{enumerate} 

\begin{lemma} \label{lin-fin-length} Let $G$ 
be a group of $\Sy$-type. Then, for any left artinian $\mathbb Q$-algebra 
$R$, a smooth $R[G]$-module is finitely generated if and only if it is of 
finite length. In particular, the category of smooth left $R[G]$-modules 
is a locally artinian and locally noetherian Grothendieck category. \end{lemma}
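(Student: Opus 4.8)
The statement has two directions. The direction ``finite length $\Rightarrow$ finitely generated'' is formal and holds for any ring: a module of finite length is noetherian, hence finitely generated, and this uses nothing about $G$ or $R$. So the real content is ``finitely generated $\Rightarrow$ finite length,'' and since a finitely generated module is a quotient of a finite direct sum $\bigoplus_{i=1}^N R[G/U_i]$ with $U_i\in B$, it suffices to show each permutation module $R[G/U]$ has finite length as a smooth $R[G]$-module. The plan is to bound $\mathrm{length}_{R[G]}R[G/U]$ using the Corollary following Lemma~\ref{length-upper-bound}: with $W=R[G/U]$ one has
\[
\mathrm{length}_{R[G]}R[G/U]\le\inf_{V\in B}\sup_{H\in B,\,H\subseteq V}\mathrm{length}_{R[N_G(H)]}\bigl(R[G/U]^H\bigr),
\]
so I need, for $H$ small enough, a uniform finite bound on the length of the $R[N_G(H)]$-module $R[G/U]^H=R\bigl[(G/U)^H\bigr]$.

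The key structural input is that $G$ is of $\Sy$-type. First, $G$ is Roelcke precompact, so $(G/U)^H$ is a finite set (it embeds in $H\backslash G/U$), and by Lemma~\ref{lin-fin} the finite group $N_G(H)/H$ acts on it. By condition (ii) of Definition~\ref{def-S-type}, for $V$ small enough and $H\subseteq V$ the projection $N_U(H)\backslash(G/U)^H\to U\backslash G/U$ is injective; combined with condition (i) (surjectivity of $(G/V)^V\to(G/U)^V$, which as in the proof of Theorem~\ref{admissibility} forces $N_G(H)$ to act transitively on each fiber), this means $(G/U)^H$ decomposes into at most $\#(U\backslash G/U)$ orbits under $N_G(H)$, a bound independent of $H$. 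Hence $R\bigl[(G/U)^H\bigr]$ is a quotient of $\bigoplus R[N_G(H)/S_j]$ over at most $\#(U\backslash G/U)$ open subgroups $S_j\supseteq H$ of $N_G(H)$. So it is enough to bound, uniformly, the length of $R[N_G(H)/S]$ as an $R[N_G(H)]$-module where $N_G(H)/H$ is finite; but $R[N_G(H)/S]\cong R[(N_G(H)/H)/(S/H)]$ is a finitely generated module over the finite-group algebra $R[N_G(H)/H]$, and since $R$ is left artinian, $R[\text{finite group}]$ is left artinian, so this module has finite length. The delicate point is uniformity in $H$: the finite groups $N_G(H)/H$ vary, so one must argue that the \emph{number} of orbits (not the sizes of the groups) is what controls the length bound. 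This is exactly what $\Sy$-type gives via the injectivity in Definition~\ref{def-S-type}(ii), and the bound $\#(U\backslash G/U)$ is the natural candidate; making the $\sup_{H\subseteq V}$ finite is the heart of the argument.

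I expect the main obstacle to be the bookkeeping in the last paragraph: pinning down precisely which $V$ makes Definition~\ref{def-S-type}(ii) applicable for all $H\subseteq V$, and then checking that for such $H$ the $R[N_G(H)]$-module $R[(G/U)^H]$ genuinely has length at most $\#(U\backslash G/U)\cdot\ell$ where $\ell=\ell(R)$ bounds $\mathrm{length}_{R[\Gamma]}R[\Gamma/S]$ over all finite quotients $\Gamma$ arising as $N_G(H)/H$ and all subgroups $S$ — here one uses that each such length is at most $\mathrm{length}_R(R)\cdot\#\Gamma/\#S \le \mathrm{length}_R(R)\cdot[\Gamma:S]$, and more carefully that it is at most $\mathrm{length}_R(R[\Gamma/S])<\infty$; since we only need finiteness of the $\sup$ over $H\subseteq V$ and $\#\Gamma$ need not be bounded, the cleanest route is to observe the stronger fact that for $H\subseteq V$ small enough the stabilizers $S_j/H$ can be taken trivial (because $(G/U)^H$ is a union of $N_G(H)$-orbits each of which, by Lemma~\ref{lin-fin} applied inside, is free over $N_G(H)/H$ when $H$ is the stabilizer of the point — i.e., $(G/U)^H = N_G(H)/H$ on each orbit when $H$ is chosen to already be the pointwise stabilizer of a generating configuration), making $R[(G/U)^H]$ a free $R[N_G(H)/H]$-module of rank $\le\#(U\backslash G/U)$, whose length as an $R[N_G(H)]$-module is then at most $\#(U\backslash G/U)\cdot\mathrm{length}_R(R)$, uniformly. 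Once the permutation modules have uniformly bounded length, the Corollary to Lemma~\ref{length-upper-bound} finishes the finite-generated $\Rightarrow$ finite-length implication, and the final sentence (locally artinian and locally noetherian Grothendieck category) follows because the generators $R[G/U]$ have finite length, hence are both artinian and noetherian objects.
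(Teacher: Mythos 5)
Your overall strategy (reduce to finite length of $R[G/U]$, then bound it via the Corollary to Lemma~\ref{length-upper-bound}) matches the paper's, and your reading of Definition~\ref{def-S-type}(ii) correctly identifies $\#(U\backslash G/U)$ as the relevant uniform quantity. But the step where you try to turn this into a length bound has a genuine gap, in two places.

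First, the orbit structure is misread. By condition (i) of Definition~\ref{def-S-type} together with Lemma~\ref{lin-fin}, $(G/U)^V$ is a \emph{single} $N_G(V)$-orbit, namely $N_G(V)/(N_G(V)\cap U)$ (this is what the paper writes as $\Psi^V\cong H_V/(H_V\cap(U/V))$); it is not a union of at most $\#(U\backslash G/U)$ orbits. What condition (ii) controls is the number of $N_U(V)$-orbits on $(G/U)^V$, i.e.\ the number of $\bigl(N_G(V)\cap U, N_G(V)\cap U\bigr)$-double cosets, not the number of $N_G(V)$-orbits. Relatedly, the claim that ``for $H$ small enough the stabilizers can be taken trivial'' is false: the stabilizer of a point of $(G/U)^V$ in $N_G(V)/V$ is $(N_G(V)\cap U)/V$, which is typically nontrivial (e.g.\ for $G=\Sy_\Psi$, $U=\Sy_{\Psi|T}$, $V=\Sy_{\Psi|T'}$ with $T\subsetneq T'$, the stabilizer of the inclusion $T\hookrightarrow T'$ in $\Sy_{T'}$ is $\Sy_{T'|T}\neq 1$). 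So $R[(G/U)^V]$ is not a free $R[N_G(V)/V]$-module.

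Second, even granting freeness, your concluding bound ``length $\le\#(U\backslash G/U)\cdot\mathrm{length}_R(R)$'' does not follow: the length of a free $R[\Gamma]$-module of rank $r$ is $r\cdot\mathrm{length}_{R[\Gamma]}(R[\Gamma])$, and $\mathrm{length}_{R[\Gamma]}(R[\Gamma])$ grows with the finite group $\Gamma=N_G(V)/V$ (already for $R=\mathbb Q$, $\Gamma=\Sy_n$ this is $\sum_\lambda d_\lambda$). So the $\sup$ over $V$ would not be finite along this route. The paper's way around this is the one missing idea in your sketch: bound length by the dimension of the endomorphism algebra. For $F$ a finite extension of $\mathbb Q$ chosen so $F[\Psi^V]$ is a sum of absolutely simple $F[H_V]$-modules, one has $\mathrm{length}_{F[H_V]}(F[\Psi^V])\le\dim_F\End_{F[H_V]}(F[\Psi^V])$, and since $\Psi^V$ is a single $H_V$-orbit with point-stabilizer $N_U(V)/V$, this endomorphism algebra has $F$-dimension $\#\bigl[(N_G(V)\cap U)\backslash\Psi^V\bigr]$, which condition (ii) bounds by $\#(U\backslash G/U)$ for $V$ small. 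That is the uniform bound. Finally, to pass from $F$ to a general left-artinian $\mathbb Q$-algebra $R$ you also need Lemma~\ref{simple-prod} (both parts), which converts the bound over $F[H_V]$ into one over $(R\otimes F)[H_V]$ with an extra factor $\mathrm{length}_{R\otimes F}(R\otimes F)$; your replacement estimate $\mathrm{length}_R(R)\cdot[\Gamma:S]$ is again not uniform in $\Gamma$ and does not serve this purpose.
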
 
\begin{proof} Any smooth finitely generated $R[G]$-module is a quotient of a finite 
sum of $R[G]$-modules of type $R[G/U]$ for some subgroups $U$ from 
a fixed base $B$ of open subgroups of $G$. Therefore, it suffices 
to check that the $R[G]$-modules $R[G/U]$ are of finite length. 

Fix a base $B$ of open subgroups of $G$ as in definition of group of 
$\Sy$-type, some $U\in B$ and set $\Psi:=G/U$. As $G$ is a group of 
$\Sy$-type, $\Psi^V\cong H_V/(H_V\cap(U/V))$ is an $H_V$-orbit for 
all $V\in B$, $V\subseteq U$, where $H_V:=N_G(V)/V$, so \begin{equation} 
\label{end-bound}\End_{F[H_V]}(F[\Psi^V])\cong F[\Psi^V]^{H_V\cap(U/V)}
\cong F[(H_V\cap(U/V))\backslash\Psi^V]\end{equation} 
for any finite field extension $F|\mathbb Q$. The natural projection 
$(H_V\cap(U/V))\backslash\Psi^V=(N_G(V)\cap U)\backslash\Psi^V\to 
U\backslash\Psi$ is injective for sufficiently small $V$, and 
therefore, length of the $F[H_V]$-module $F[\Psi^V]$ does not exceed 
\[\dim_F\End_{F[H_V]}(F[\Psi^V])=\dim_FF[(H_V\cap(U/V))
\backslash\Psi^V]\le\dim_FF[U\backslash\Psi]=\#[U\backslash\Psi].\] 
For a fixed $V$, we choose $F$ so that the $F[H_V]$-module $F[\Psi^V]$ 
is a sum of absolutely simple modules. Clearly, length of the 
$R[G]$-module $R[\Psi]$ does not exceed length of the 
$(R\otimes F)[G]$-module $(R\otimes F)[\Psi]$. By Lemma 
\ref{length-upper-bound}, length of the $(R\otimes F)[G]$-module 
$(R\otimes F)[\Psi]$ does not exceed \[\sup_{V\in B}
\mathrm{length}_{(R\otimes F)[H_V]}((R\otimes F)[\Psi^V]).\] 
On the other hand, by Lemma \ref{simple-prod} (i), 
$\mathrm{length}_{(R\otimes F)[H_V]}((R\otimes F)[\Psi^V])=
\mathrm{length}_{(R\otimes F)}(R\otimes F)\cdot
\mathrm{length}_{F[H_V]}(F[\Psi^V])$, so finally, 
$\mathrm{length}_{(R\otimes F)[G]}((R\otimes F)[\Psi])\le
\mathrm{length}_{R\otimes F}(R\otimes F)\cdot\#[U\backslash\Psi]$, 
which is finite by Lemma \ref{simple-prod} (ii). \end{proof} 

\section{Smooth representations: coinduction, simple objects 
and injectives} 
\subsection{Induction and coinduction} Let $A\subset R$ be a pair of associative 
unital topological rings with a base of neighbourhoods of 0 given by a collection 
of left ideals. The restriction functor $R\mbox{-mod}\to A\mbox{-mod}$ admits a 
left adjoint $R\otimes_A(-)$ (induction): $\Hom_A(E,W)=\Hom_R(R\otimes_AE,W)$. 
Let $R\mbox{-mod}^{\mathrm{sm}}$ be the category of left $R$-modules 
such that any element is annihilated by an open left ideal. 
Then the restriction functor induces a functor 
$R\mbox{-mod}^{\mathrm{sm}}\to A\mbox{-mod}^{\mathrm{sm}}$ 
admitting a right adjoint (coinduction), sending a smooth 
$A$-module $E$ to the smooth part of the $R$-module $\Hom_A(R,E)$: 
$\Hom_A(W,E)=\Hom_R(W,\Hom_A(R,E)^{\mathrm{sm}})$. Clearly, if a 
smooth $A$-module $E$ is injective then $\Hom_A(R,E)^{\mathrm{sm}}$ 
is an injective smooth $R$-module. In particular, if the restriction 
to $A$ of a smooth $R$-module $W$ is injective the adjunction 
morphism $W\to\Hom_A(R,W)^{\mathrm{sm}}$ gives an embedding into 
an injective smooth $R$-module. 

Our only examples of such topological rings will be skew group 
rings $A\langle G\rangle$ with the base of 
open left ideals $I_U$ indexed by a base $B$ of open subgroups $U$ 
of $G$, where $I_U$ is generated by elements $u-1$ for all $u\in U$. 

\begin{lemma} \label{finite-coset} For any group $G$, any finite collection 
$H_1,\dots,H_N$ of subgroups of $G$ of infinite index and any finite collection of 
$\xi_{ij}\in G$ one has $G\neq\bigcup_{i=1}^N(\coprod_{j=1}^M\xi_{ij}H_i)$. \end{lemma}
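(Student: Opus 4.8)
This is B.~H.~Neumann's covering lemma: a group is never the union of finitely many left cosets of subgroups of infinite index (the inner $\coprod$ in the statement only restricts the configurations one must rule out, so we may ignore it and simply write $G=\bigcup_{i,j}\xi_{ij}H_i$). The plan is to prove the standard sharper contrapositive: \emph{if $G=\bigcup_{i=1}^N\bigcup_{j=1}^{M}\xi_{ij}H_i$, then $[G:H_i]<\infty$ for at least one $i$}. First I would group the cosets according to the subgroup they translate, so that without loss of generality $H_1,\dots,H_N$ are pairwise distinct (each surviving $H_i$ is still one of the original subgroups), and then induct on $N$.

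The base case $N=1$ is immediate: $G$ is covered by at most $M$ left cosets of $H_1$, so $[G:H_1]\le M<\infty$. For the inductive step I would single out $H_N$ and examine the cosets $\xi_{Nj}H_N$, $1\le j\le M$, occurring in the cover. If those alone already cover $G$, then $[G:H_N]<\infty$ and we are done. Otherwise pick $x\in G$ lying in none of them; then $xH_N$ is disjoint from every $\xi_{Nj}H_N$, hence $xH_N\subseteq\bigcup_{i<N}\bigcup_j\xi_{ij}H_i$, a union involving only cosets of $H_1,\dots,H_{N-1}$.

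The key move is then to translate: for an arbitrary $g\in G$ one has $gH_N=(gx^{-1})(xH_N)\subseteq\bigcup_{i<N}\bigcup_j(gx^{-1}\xi_{ij})H_i$, again a finite union of left cosets of $H_1,\dots,H_{N-1}$. Replacing in the original covering each of the $M$ cosets $\xi_{Nj}H_N$ by such a finite family yields a new covering of $G$ by finitely many left cosets of $H_1,\dots,H_{N-1}$ alone; by the induction hypothesis one of these has finite index in $G$. This finishes the induction, and feeding in the assumption that every $H_i$ has infinite index gives the lemma by contraposition.

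I expect the only (mild) obstacle to be the bookkeeping in the replacement step: one must observe that a single finite index set — namely all pairs $(i,j)$ with $i<N$ — serves simultaneously for every coset $gH_N$, with only the representatives $gx^{-1}\xi_{ij}$ depending on $g$, so that the resulting cover is genuinely finite; and that the preliminary reduction to pairwise-distinct $H_i$ loses nothing. The infinite-index hypothesis itself is used nowhere inside the induction — it enters only at the very last line.
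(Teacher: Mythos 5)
Your proof is correct, and it is the classical B.~H.~Neumann argument; the paper's proof of Lemma~\ref{finite-coset} is also an induction on the number of distinct subgroups, but the inductive step is organized differently. You single out one subgroup $H_N$, choose $x$ with $xH_N$ disjoint from all $H_N$-cosets in the cover, and then use translation by $gx^{-1}$ to \emph{replace} every $H_N$-coset by finitely many cosets of $H_1,\dots,H_{N-1}$, obtaining a new finite cover of $G$ itself with one fewer subgroup. The paper instead first reduces to pairwise \emph{non-commensurable} subgroups, introduces the partial order $H_1\preceq H_2\Leftrightarrow[H_1:H_1\cap H_2]<\infty$, picks $H_s$ maximal for $\preceq$, translates a coset $\xi H_s$ inside the union of the other cosets, and then \emph{intersects with $H_s$}: one gets $H_s$ covered by finitely many cosets of $H_i\cap H_s$ ($i\ne s$), and maximality of $H_s$ is exactly what guarantees $[H_s:H_i\cap H_s]=\infty$, so the induction hypothesis applies inside $H_s$. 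Your route avoids the non-commensurability normalization and the partial order entirely and is somewhat more elementary; the paper's extra machinery is not wasted, however, since the same relation $\preceq$ (and the maximal-element trick) is reused immediately afterwards in the proof of Lemma~\ref{con}. Both arguments are sound; the only thing worth stating a bit more explicitly in yours is that the preliminary reduction to pairwise-distinct $H_i$ changes nothing because $[G:H_i]=\infty$ is a property of the subgroup, not of the index $i$, and that the new cover is finite because a single index set $\{(i,\ell):i<N,\,1\le\ell\le M\}$ works uniformly for all $M$ of the replaced cosets.
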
 
\begin{proof} We proceed by induction on $N\ge 1$, the case $N=1$ being trivial. 
Suppose that $G=\bigcup_{i=1}^N(\coprod_{j=1}^M\xi_{ij}H_i)$ for some 
$\xi_{ij}\in G$. If $H_s$ and $H_t$ are commensurable for some $s\neq t$ 
then $G=\bigcup_{i\neq t}(\coprod_{j=1}^{M[H_t:H_s\cap H_t]}\xi_{ij}'H_i)$ 
for some $\xi_{ij}'\in G$, contradicting the induction assumption, so 
we further assume that $H_i$ are pairwise non-commensurable. 

On a set of pairwise non-conmensurable subgroups of $G$ define the following relation 
$H_1\preceq H_2$ if $[H_1:H_1\cap H_2]<\infty$. This is a partial order: if $H_1\preceq H_2$ 
and $H_2\preceq H_3$ then $[H_1\cap H_2:H_1\cap H_2\cap H_3]\le[H_2:H_2\cap H_3]<\infty$ 
(since the natural map $H_1\cap H_2/H_1\cap H_2\cap H_3\to H_2/H_2\cap H_3$ is injective). 

Let $H_s$ be maximal among $H_i$ with respect to $\preceq$. Fix $\xi\in G$ 
outside the set $\xi_{s1}H_s\coprod\cdots\coprod\xi_{sM}H_s$. Then 
$\xi H_s\subseteq\bigcup_{i\neq s}(\coprod_{j=1}^M\xi_{ij}H_i)$. Omitting those $\xi_{ij}$ 
for which $\xi H_s\cap\xi_{ij}H_i=\varnothing$ and replacing appropriately 
the remaining $\xi_{ij}$'s, we may assume that $\xi^{-1}\xi_{ij}\in H_s$. 

Finally, $H_s=\bigcup_{i\neq s}(\coprod_{j=1}^M\xi^{-1}\xi_{ij}H_i\cap H_s)$, 
contradicting induction assumption. \end{proof}

\begin{lemma} \label{con} Let $G$ be a group and $E$ be an abelian group. For any subgroup 
$H\supseteq G$ define the additive map \[\kappa_H:E[G/H]\to\Maps(G,E)\quad\text{{\rm by}}
\quad e[g]\mapsto\left(\xi\mapsto\left\{\begin{array}{ll}e&\text{{\rm if} $\xi^{-1}\in[g]$}\\ 
0&\text{{\rm otherwise}}\end{array}\right.\right)\quad\text{{\rm for any $e\in E$ 
and $g\in G$}}.\] Let $\{H_i\}$ be a collection of pairwise non-commensurable subgroups 
of $G$. 

Then the map $\sum_i\kappa_{H_i}:\bigoplus_iE[G/H_i]\to\Maps(G,E)$ is $G$-equivariant (if 
$\Maps(G,E)$ is endowed with the standard $G$-action: $\varphi^g(\xi):=\varphi(\xi g)$), 
injective and factors through $\Maps(H\backslash G,E)$. \end{lemma}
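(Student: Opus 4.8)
The statement of Lemma~\ref{con} has three assertions: $G$-equivariance, injectivity, and factoring through $\Maps(H\backslash G, E)$ — but wait, the last clause as stated mentions a single $H$, which must be a typo for "factors through $\bigoplus_i \Maps(H_i\backslash G, E)$" or similar; I will interpret it as each $\kappa_{H_i}$ landing in the $H_i$-invariants $\Maps(H_i\backslash G, E)\subseteq\Maps(G,E)$ for the left translation action, so the whole sum lands there summand-wise. Let me address the three pieces in turn.

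First, $G$-equivariance and the factoring are routine bookkeeping, so I would do them quickly. For equivariance: $\kappa_H(e[g])$ is the function supported on $\{\xi : \xi^{-1}\in gH\}$, i.e. on $(gH)^{-1} = Hg^{-1}$, with value $e$ there. The $G$-action sends $[g]\mapsto [hg]$ on the left, and on $\Maps(G,E)$ sends $\varphi$ to $\xi\mapsto\varphi(\xi h)$; one checks the support of $\kappa_H(e[hg])$ is $H(hg)^{-1} = Hg^{-1}h^{-1}$, which is exactly the translate describing $\kappa_H(e[g])^h$. A direct substitution confirms the values match. For the factoring: the support $Hg^{-1}$ of each basis image is a union of left $H$-cosets read the right way — precisely, $\kappa_H(e[g])$ is constant on right cosets $\{h'\xi: h'\}$? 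No: I need to check on which side the invariance lives. Since the support is $Hg^{-1}$ (a set stable under left multiplication by $H$) and the value is constant, $\kappa_H(e[g])(h'\xi) = \kappa_H(e[g])(\xi)$ for all $h'\in H$, so the image lies in $\Maps(H\backslash G, E)$, functions on left $H$-cosets. This is the intended meaning of the last clause.

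Second, and this is the main obstacle, injectivity of $\sum_i\kappa_{H_i}$. Suppose $\sum_i \alpha_i$ maps to zero, with $\alpha_i = \sum_j e_{ij}[g_{ij}]\in E[G/H_i]$, the $[g_{ij}]$ distinct in $G/H_i$ for each fixed $i$, and the $e_{ij}\neq 0$. The function $\kappa_{H_i}(\alpha_i)$ is supported on $\bigcup_j H_i g_{ij}^{-1}$, and on a point $\xi$ lying in $H_i g_{ij}^{-1}$ (for the unique such $j$, by distinctness of cosets) it takes value $e_{ij}$. So vanishing of the total sum says: for every $\xi\in G$, the sum over those $(i,j)$ with $\xi\in H_i g_{ij}^{-1}$ of $e_{ij}$ is zero. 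The key point is to find, for any fixed $(i_0,j_0)$, a point $\xi$ that lies in $H_{i_0}g_{i_0 j_0}^{-1}$ but in no other $H_i g_{ij}^{-1}$ — this would force $e_{i_0 j_0}=0$, a contradiction. Equivalently, writing $\xi = h g_{i_0 j_0}^{-1}$ with $h$ ranging over $H_{i_0}$, I need $h\in H_{i_0}$ such that $h g_{i_0 j_0}^{-1}$ avoids every $H_i g_{ij}^{-1}$ with $(i,j)\neq(i_0,j_0)$; that is, $h$ avoids $H_i g_{ij}^{-1}g_{i_0 j_0}$ for all such $(i,j)$. The pieces with $i = i_0$ are harmless: $H_{i_0}g_{i_0 j}^{-1}g_{i_0 j_0}\cap H_{i_0} = \varnothing$ unless $g_{i_0 j}^{-1}g_{i_0 j_0}\in H_{i_0}$, i.e. unless $j = j_0$, by distinctness of cosets. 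So I must show $H_{i_0}$ is not covered by the finitely many sets $H_{i_0}\cap H_i\eta_{ij}$ with $i\neq i_0$ (and $\eta_{ij} := g_{ij}^{-1}g_{i_0 j_0}$). Each $H_{i_0}\cap H_i\eta_{ij}$, if nonempty, is a coset of $H_{i_0}\cap H_i^{\eta}$ for an appropriate conjugate, hence a coset of a subgroup of $H_{i_0}$ of index $[H_{i_0}: H_{i_0}\cap H_i']$ which is infinite because $H_{i_0}$ and $H_i$ are non-commensurable. Now I invoke Lemma~\ref{finite-coset} (with $G$ there taken to be $H_{i_0}$, the subgroups being these intersections of infinite index): the group $H_{i_0}$ cannot be a finite union of cosets of subgroups of infinite index, so the desired $h$ exists.

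Third, I should double-check the reduction to non-commensurable subgroups is already in the hypothesis — it is ("pairwise non-commensurable $H_i$"), so no reduction step is needed, unlike in Lemma~\ref{finite-coset}. The one delicate check is that $H_{i_0}\cap H_i\eta_{ij}$, when nonempty, really is a coset of $H_{i_0}\cap (H_i)^{\eta_{ij}}$ of infinite index in $H_{i_0}$: if $h_0\in H_{i_0}\cap H_i\eta_{ij}$ then $H_{i_0}\cap H_i\eta_{ij} = (H_{i_0}\cap H_i h_0)= h_0\cdot(h_0^{-1}H_{i_0}h_0\cap h_0^{-1}H_i h_0)\cap\dots$ — I would spell this out carefully so the index computation $[H_{i_0}: H_{i_0}\cap H_i^{h}] = \infty$ (equivalent to non-commensurability of $H_{i_0}$ and $H_i^h$, equivalent to that of $H_{i_0}$ and $H_i$) is clean. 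Granting that, Lemma~\ref{finite-coset} applies verbatim and finishes the argument. I expect the bookkeeping in the equivariance computation and the coset-index lemma to be the only places needing genuine care; the structural idea — "reduce vanishing to a point-separation statement, then quote the covering lemma" — is the heart of it.
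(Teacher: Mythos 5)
Your structural plan—reduce vanishing to a point-separation statement, then invoke Lemma~\ref{finite-coset}—is essentially the paper's approach, and the equivariance and factoring steps are fine. However, the injectivity argument has a genuine gap at exactly the point you flagged as ``delicate.'' You claim that the index $[H_{i_0}:H_{i_0}\cap H_i]$ is infinite ``because $H_{i_0}$ and $H_i$ are non-commensurable.'' This does not follow: non-commensurability of $H_{i_0}$ and $H_i$ only says that \emph{at least one} of $[H_{i_0}:H_{i_0}\cap H_i]$ and $[H_i:H_{i_0}\cap H_i]$ is infinite. It is perfectly possible to have $[H_{i_0}:H_{i_0}\cap H_i]<\infty$ while $[H_i:H_{i_0}\cap H_i]=\infty$ (think of $H_{i_0}$ essentially a finite-index subgroup of $H_i$, or more crudely $H_{i_0}=1$ and $H_i$ any infinite-index subgroup). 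In that case the sets $H_{i_0}\cap H_i\eta_{ij}$ are cosets of a \emph{finite}-index subgroup of $H_{i_0}$, and Lemma~\ref{finite-coset} gives you nothing.

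The fix is the one the paper makes: you cannot choose $(i_0,j_0)$ arbitrarily. Instead, among the indices $i$ with $\alpha_i\neq 0$, pick $i_0$ so that $H_{i_0}$ is \emph{maximal} with respect to the partial order $\preceq$ from the proof of Lemma~\ref{finite-coset} ($H\preceq H'$ iff $[H:H\cap H']<\infty$). Maximality combined with pairwise non-commensurability forces $H_{i_0}\not\preceq H_i$ for all other $i$, which is precisely the needed statement $[H_{i_0}:H_{i_0}\cap H_i]=\infty$. After that your argument goes through: the nonempty sets $H_{i_0}\cap H_i\eta_{ij}$ are then cosets of infinite-index subgroups of $H_{i_0}$ (and, as you correctly note, the conjugation by an element $h_0\in H_{i_0}$ preserves this index since $H_{i_0}^{h_0}=H_{i_0}$), so Lemma~\ref{finite-coset} applies and yields the desired point $\xi$. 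With this correction, your proof matches the paper's in substance—the paper phrases the contradiction via a support inclusion $H_{i_s}\subseteq\bigcup_{t\neq s}(\coprod_j H_{i_t}\xi_{tj})$ rather than via point-separation, but this is the same calculation packaged differently.
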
 
\begin{proof} As $\xi^{-1}\in[g]$ if and only if $(h\xi)^{-1}\in[g]$ for any $h\in H$, 
$\kappa_H([g])(h\xi)=\kappa_H([g])(\xi)$ for any $h\in H$, i.e., $\kappa_H$ factors 
through $\Maps(H\backslash G,E)$. Namely, $\kappa_H$ transforms $e[g]$ to the 
delta-function with non-zero value $e$ supported on $[g^{-1}]\in H\backslash G$. 

Suppose that $\sum_i\kappa_{H_i}$ is not injective, so its kernel contains an element 
$\sum_{t=1}^N\alpha_t$ for some non-zero $\alpha_t\in E[G/H_{i_t}]$. Let $H_{i_s}$ be 
a maximal subgroup with respect to the partial order $\preceq$ defined 
in the proof of Lemma \ref{finite-coset}. Then looking at the support of both sides 
of the equality $-\kappa_{H_{i_s}}(\alpha_s)=\sum_{t\neq s}\kappa_{H_{i_t}}(\alpha_t)$, 
we get an inclusion $H_{i_s}\subseteq\bigcup_{t\neq s}(\coprod_jH_{i_t}\xi_{tj})$ into 
a finite union of cosets. But this contradicts to Lemma \ref{finite-coset}, 
so $\sum_i\kappa_{H_i}$ is injective. 

To check the $G$-equivariantness ($\kappa_H(e[g'g])(\xi)=\kappa_H(e[g])(\xi g')$), it 
suffices to note that $\xi^{-1}\in[g'g]$ if and only if $(\xi g')^{-1}\in[g]$. \end{proof}

\begin{lemma} \label{coinduction-gener} Let $G$ be a Roelcke precompact group and $H\subseteq G$ 
be an open subgroup. Let $R$ be an associative unitary ring endowed with the trivial $G$-action 
and $E$ be a left $R$-module considered as a trivial $H$-module. Suppose that there is a 
collection $\{G_{\Lambda}\}$ of pairwise non-commensurable open subgroups of $G$ such that 
$\sum_{\Lambda}\#\{\text{{\rm finite $U$-orbits in $G/G_{\Lambda}$}}\}=\#(H\backslash G/U)$ 
for any $U$ from a base of open subgroups of $G$. Then the left $R[G]$-module $W$ coinduced 
by $E$ {\rm (}i.e., the smooth part of $\Hom_{R[H]}(R[G],E)=\Maps_H(G,E)${\rm )} is 
isomorphic to $\bigoplus_{\Lambda}E[G/G_{\Lambda}]$. \end{lemma}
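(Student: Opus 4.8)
The plan is to exhibit the map $\sum_\Lambda\kappa_{G_\Lambda}\colon\bigoplus_\Lambda E[G/G_\Lambda]\to\Maps(G,E)$ from Lemma \ref{con} and show that its image is precisely the $R[G]$-submodule $W$ coinduced by $E$; together with the injectivity already proved in Lemma \ref{con}, this gives the desired isomorphism. First I would recall that, by the adjunction discussion preceding Lemma \ref{finite-coset}, $W$ is the smooth part of $\Maps_H(G,E)=\{\varphi\colon G\to E\mid\varphi(h\xi)=\varphi(\xi)\ \forall h\in H\}$, where $G$ acts by right translation and $E$ carries the trivial $H$-action; here ``smooth'' means annihilated by some open left ideal $I_U$, i.e. fixed by some open subgroup $U$ acting on the right. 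So $W=\bigcup_U\Maps_H(G,E)^U=\bigcup_U\Maps(H\backslash G/U,E)$, the union being over a base $B$ of open subgroups $U$ of $G$.

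Next I would check the two inclusions. For ``$\subseteq$'': Lemma \ref{con} already tells us each $\kappa_{G_\Lambda}$ is $G$-equivariant and factors through $\Maps(G_\Lambda\backslash G,E)$; since $G_\Lambda$ is open, any element of $E[G/G_\Lambda]$ has finite support, so its image is a finite sum of delta-functions supported on $G_\Lambda\backslash G$, hence lies in $\Maps_{G_\Lambda}(G,E)$ and is fixed on the right by $\bigcap$(stabilizers of the finitely many cosets in the support), an open subgroup. Thus the image of $\sum_\Lambda\kappa_{G_\Lambda}$ lands in the smooth part of $\Maps_H(G,E)$ — one only needs $H\subseteq G_\Lambda$ here to ensure $H$-invariance on the left; if that fails one works instead with the $H$-invariant functions directly, but in our setting the $G_\Lambda$ are among the open subgroups and the delta-functions are automatically left-$G_\Lambda$-invariant, so left-$H$-invariance is not what is needed — rather, $\Maps_H(G,E)$ consists of right-$H$... let me restate carefully: with the $G$-action $\varphi^g(\xi)=\varphi(\xi g)$, the relevant invariance is $\varphi(\xi)=\varphi(h\xi)$, which holds for a delta-function supported on $[g^{-1}]\in H\backslash G$ iff that support is a union of $H$-orbits on the right of... in any case this is the routine bookkeeping of Lemma \ref{con} and I would simply invoke it.

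The heart of the argument, and the step I expect to be the main obstacle, is the reverse inclusion $W\subseteq\mathrm{im}(\sum_\Lambda\kappa_{G_\Lambda})$, and this is exactly where the hypothesis on orbit-counts enters. Fix an open $U\in B$. A function in $\Maps(H\backslash G/U,E)$ is an arbitrary $E$-valued function on the finite set $H\backslash G/U$, so the space of such functions (as an $R$-module, ignoring the $G$-action) is $E^{\oplus\#(H\backslash G/U)}$. On the other side, the $U$-invariants of $\bigoplus_\Lambda E[G/G_\Lambda]$ decompose as $\bigoplus_\Lambda E[(G/G_\Lambda)/U]$, i.e. $\bigoplus_\Lambda E[\{\text{finite }U\text{-orbits in }G/G_\Lambda\}]$, whose $R$-rank is $\sum_\Lambda\#\{\text{finite }U\text{-orbits in }G/G_\Lambda\}$; the hypothesis says this equals $\#(H\backslash G/U)$. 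Since $\sum_\Lambda\kappa_{G_\Lambda}$ is injective (Lemma \ref{con}) and $G$-equivariant, it restricts to an injection on $U$-invariants $\bigoplus_\Lambda E[(G/G_\Lambda)/U]\hookrightarrow\Maps(H\backslash G/U,E)$ between modules of equal finite $R$-rank over whatever ring is convenient — but injectivity between free modules of equal rank need not be surjective over a general ring. I would circumvent this by working first over a field (or by a direct combinatorial argument): one shows directly that the delta-functions coming from the finite $U$-orbits in the various $G/G_\Lambda$ map, under $\sum_\Lambda\kappa_{G_\Lambda}$, to a spanning set — indeed to a basis, by the equality of cardinalities together with linear independence — of the delta-functions on $H\backslash G/U$, i.e. the map of $U$-invariants is a bijection on the underlying ``index sets''. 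Concretely: $\kappa_{G_\Lambda}$ sends the delta-function at a coset $[g]\in(G/G_\Lambda)/U$ to (a multiple of) the delta-function at $[g^{-1}]\in H\backslash G/U$ when this coset is finite, and the count hypothesis guarantees that as $\Lambda$ and $[g]$ range over all finite $U$-orbits, these targets run bijectively over $H\backslash G/U$; injectivity from Lemma \ref{con} forces the bijection. Passing to the union over $U\in B$ yields $W=\mathrm{im}(\sum_\Lambda\kappa_{G_\Lambda})\cong\bigoplus_\Lambda E[G/G_\Lambda]$, the last isomorphism by injectivity of the map. The delicate point to get right is that the map on each $U$-level is genuinely surjective and not merely a rank-preserving injection, which is why I would phrase it as an identification of the discrete index sets (finite $U$-orbits $\leftrightarrow$ double cosets) rather than a dimension count, the latter being used only to confirm the bijection.
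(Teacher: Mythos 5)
Your overall structure is right and matches the paper's: use the injective $G$-equivariant map $\sum_\Lambda\kappa_{G_\Lambda}$ from Lemma~\ref{con}, then establish surjectivity by passing to $U$-invariants for $U$ ranging over a base. You also correctly spot the subtlety that an injection between free $R$-modules of equal finite rank need not be surjective. But the ``direct combinatorial argument'' you then commit to is factually wrong, and this is where the gap lies.

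Concretely: the $U$-invariants of $E[G/G_\Lambda]$ are spanned by the sums $\sum_{[g']\in O}e[g']$ over finite $U$-orbits $O\subset G/G_\Lambda$. By the formula in Lemma~\ref{con}, the image of such a basis element under $\kappa_{G_\Lambda}$ is the indicator function of the set $\bigcup_{[g']\in O}G_\Lambda(g')^{-1}=G_\Lambda g^{-1}U$, i.e.\ of a single $(G_\Lambda,U)$-double coset. This is \emph{not} a delta-function on $H\backslash G/U$: whenever $H\subsetneqq G_\Lambda$ (which happens for all $\Lambda$ but one in the intended applications), the set $G_\Lambda g^{-1}U$ is a union of \emph{several} $(H,U)$-double cosets. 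For example, with $\Lambda$ giving $G_\Lambda=G$, the image is the constant function, i.e.\ the sum of \emph{all} delta-functions on $H\backslash G/U$. So the map of $U$-invariants is emphatically not monomial, and there is no ``identification of discrete index sets'' to be had; equal cardinalities plus injectivity simply does not force surjectivity without further input. Your parenthetical fallback ``work first over a field'' is the correct idea, but you never explain how to get from there back to a general $R$-module $E$, and the lemma as stated is for arbitrary $E$.

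The paper closes exactly this gap by a two-step reduction: since both $U$-invariant sides are free $E$-modules on index sets independent of $E$ and the matrix of the map is defined over $\mathbb Z$, surjectivity for all $E$ reduces to surjectivity for $E=\mathbb Z$; and surjectivity of a $\mathbb Z$-linear map between finite-rank free $\mathbb Z$-modules holds if and only if it holds after reduction modulo every prime. Over a prime field, injectivity (from Lemma~\ref{con}) together with equality of finite dimensions (the counting hypothesis) forces bijectivity, and one lifts back. You should replace your combinatorial paragraph with this reduction. One further small point: both your argument and the lemma's proof quietly need the image of $\sum_\Lambda\kappa_{G_\Lambda}$ to land in $\Maps_H(G,E)$, which requires $H\subseteq G_\Lambda$ (true in the application, Proposition~\ref{coinduction}); your middle paragraph correctly senses this issue but then dismisses it as ``routine bookkeeping'' without resolving it.
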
 
\begin{proof} Lemma \ref{con} provides a natural injective morphism 
$\sum_{\Lambda\subseteq J}\kappa_{\Lambda}:
\bigoplus_{\Lambda\subseteq J}E[G/G_{\Lambda}]\to W$. To check its surjectivity, we verify 
the surjectivity of the induced maps $\bigoplus_{\Lambda\subseteq J}E[G/G_{\Lambda}]^U
\to W^U=\Maps(H\backslash G/U,E)$, where $U$ runs over a base of open subgroups of $G$. 

As $W^U$ is spanned by the delta-functions on $G_J\backslash G/U$, it suffices to check 
the surjectivity in the case $E=\mathbb Z$, which in turn is equivalent to the cases where 
$E$ runs over all prime fields. If $E$ is a field then the surjectivity is equivalent 
to the coincidence of dimensions of the source and the target, i.e., to the equality 
$\sum_{\Lambda}\#\{\text{finite $U$-orbits in $G/G_{\Lambda}$}\}=\#(H\backslash G/U)$. 
\end{proof} 

\begin{proposition} \label{coinduction} Let $G$ be the group of automorphisms 
of a countable $\Psi$, which is either a set or a vector space over a finite field, 
fixing a finite subset of $\Psi$. Let $R$ be an associative unitary ring endowed 
with the trivial $G$-action, $J\subset\Psi$ be a $G$-closed finite subset and $E$ be 
a left $R$-module considered as a trivial $G_J$-module. Then the left $R[G]$-module 
coinduced by $E$ is isomorphic to $\bigoplus_{\Lambda}E[G/G_{\Lambda}]$, where 
$\Lambda$ runs over the $G$-closed subsets of $J$. \end{proposition}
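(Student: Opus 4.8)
The plan is to deduce Proposition \ref{coinduction} from Lemma \ref{coinduction-gener} by exhibiting the right collection of subgroups $\{G_\Lambda\}$ and verifying the combinatorial identity in its hypothesis. First I would take $\{G_\Lambda\}$ to be the family $\{G_\Lambda\}$ where $\Lambda$ ranges over the $G$-closed subsets of $J$; here $G$ is the automorphism group fixing a marked finite set $\Psi_0$, so every $G$-closed set contains $\Psi_0$, and the $G$-closed subsets of $J$ form a finite poset. I must check these subgroups $G_\Lambda$ are pairwise non-commensurable open subgroups: openness is clear since $\Lambda$ is finite, and non-commensurability follows because for $\Lambda\neq\Lambda'$ one of them, say $\Lambda\not\subseteq\Lambda'$, gives $G_{\Lambda'}\not\subseteq$ any finite-index overgroup of $G_\Lambda$ — concretely $G_{\Lambda\cap\Lambda'}$ strictly contains $G_\Lambda$ with infinite index because $\Psi$ is infinite (there are infinitely many ways to move a point of $\Lambda\smallsetminus\Lambda'$ while fixing $\Lambda\cap\Lambda'$). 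This uses the $\Sy$-type structure of the examples, available from the Examples after Definition \ref{def-S-type}.

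The core of the argument is the identity $\sum_\Lambda \#\{\text{finite $U$-orbits in }G/G_\Lambda\} = \#(G_J\backslash G/U)$ for every $U=G_T$ in the base of open subgroups (with $T\subset\Psi$ finite, $G$-closed, and we may enlarge $T$ to contain $J$). Here $H=G_J$. The right-hand side $\#(G_J\backslash G/G_T)$ was computed in the Examples following Definition~\ref{def-S-type}: a double coset $G_J g G_T$ is recorded by the pair consisting of $\Lambda_0:=g(T)\cap J$ (a $G$-closed subset of $J$) together with the embedding data of how $g^{-1}(\Lambda_0)\subseteq T$ sits inside $T$ — more precisely by the isomorphism class, under $\Aut(J)=N_G(G_J)/G_J$ acting on the left and $\Aut(T)$ on the right, of the partial map. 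For the left-hand side, a finite $G_T$-orbit in $G/G_\Lambda$ corresponds to an element of $(G/G_\Lambda)^{V}$ that is actually $G_T$-stable, i.e. to a $G_T$-fixed point of $G/G_\Lambda$ in the sense $(G/G_\Lambda)^{G_T}$, which by the Examples is the set of embeddings $\Lambda\hookrightarrow T$ (compatible with the marking $\Psi_0$); the finiteness of the orbit forces the image to be $G$-closed inside $T$, hence forces the orbit to be a single point, and two such embeddings $\Lambda\hookrightarrow T$ give the same $G_T$-orbit iff they differ by an element of $\Aut(\Lambda)=N_G(G_\Lambda)/G_\Lambda$. The bookkeeping then matches: summing over $\Lambda\subseteq J$ the number of $N_G(G_\Lambda)$-orbits of $G$-closed embeddings $\Lambda\hookrightarrow T$ reproduces exactly the count of pairs $(\Lambda_0,\iota)$ describing $G_J\backslash G/G_T$, where the correspondence sends a double coset to $\Lambda_0=g(T)\cap J\cong$ a $G$-closed subset of $J$ and to the induced embedding of that subset into $T$.

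The main obstacle I anticipate is making the ``finite $U$-orbit'' count on the left-hand side rigorous and matching it cleanly to the double-coset description on the right. Two subtleties: (a) one must argue that a $G_T$-orbit in $G/G_\Lambda$ is finite if and only if it consists of a single point whose associated subset of $T$ (the image of $\Lambda$) is $G$-closed — this is where Roelcke precompactness (Lemma \ref{ascending-chains-opens}) and the $\Sy$-type hypothesis (the $N_G(V)$-transitivity and injectivity conditions of Definition \ref{def-S-type}, verified for these groups in the Examples) enter, much as in the proof of Lemma \ref{lin-fin-length}; (b) one must check the identification is independent of the auxiliary choice of $T\supseteq J$, which is automatic once the formula is phrased $\Aut(J)$- and $\Aut(T)$-equivariantly. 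Granting the counting identity for all $U=G_T$, Lemma \ref{coinduction-gener} applies verbatim (its hypothesis that $G$ be Roelcke precompact holds since groups of $\Sy$-type are Roelcke precompact by Definition \ref{def-S-type}, and $G_J$ is open), yielding $W\cong\bigoplus_\Lambda E[G/G_\Lambda]$ with $\Lambda$ over the $G$-closed subsets of $J$, which is the claim.
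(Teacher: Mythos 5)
Your overall strategy is the paper's: reduce to Lemma~\ref{coinduction-gener} by taking the $G_{\Lambda}$ for $\Lambda$ running over the $G$-closed subsets of $J$ and verifying the cardinality identity, and your final sentence exhibits exactly the bijection the paper uses, namely $[g]\mapsto\bigl(\Lambda_0:=g(T)\cap J,\ g^{-1}|_{\Lambda_0}:\Lambda_0\hookrightarrow T\bigr)$ (the paper writes it as $[\sigma]\mapsto(\Lambda(\sigma),[\sigma^{-1}])$ with $G_{\Lambda(\sigma)}=\langle G_J,\sigma G_T\sigma^{-1}\rangle=G_{J\cap\sigma(T)}$, which is the same data). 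So the approach matches and the core observations --- pairwise non-commensurability, openness, and that a finite $G_T$-orbit in $G/G_{\Lambda}$ is a single $G_T$-fixed point determined by an embedding $\Lambda\hookrightarrow T$ --- are all correct.

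There is, however, a genuine slip in the middle of your counting: you quotient by $\Aut(J)\times\Aut(T)$ on the right-hand side and by $\Aut(\Lambda)=N_G(G_{\Lambda})/G_{\Lambda}$ on the left-hand side, but neither quotient should be there. The groups $G_J$, $G_T$, $G_{\Lambda}$ are \emph{pointwise} stabilizers, so the element $\Lambda_0=g(T)\cap J$ and the embedding $g^{-1}|_{\Lambda_0}$ depend only on the double coset $G_JgG_T$ with no residual $\Aut$-ambiguity (indeed, replacing $g$ by $hgu$, $h\in G_J$, $u\in G_T$, changes neither $\Lambda_0$ nor $g^{-1}|_{\Lambda_0}$, since $h$ is identity on $J\supseteq\Lambda_0$ and $u$ is identity on $T$). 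Likewise each $G_T$-fixed point $[g]\in (G/G_{\Lambda})^{G_T}$ corresponds to a \emph{distinct} embedding $g|_{\Lambda}:\Lambda\hookrightarrow T$, because $gG_{\Lambda}=g'G_{\Lambda}$ forces $g|_{\Lambda}=g'|_{\Lambda}$ on the nose. The Example after Definition~\ref{def-S-type} that you invoke confirms this: it identifies $G_T\backslash G/G_T$ with plain pairs $(T_0,\iota)$, not with orbits of such pairs. Fortunately, since your spurious quotients appear on both sides, your \emph{conclusion} (the bijection in the last sentence, with no quotients) is the correct one and the argument goes through once those two clauses are deleted. One further point worth making explicit, which both you and the paper pass over lightly, is the surjectivity of the correspondence: given a $G_T$-fixed $[\tau]\in G/G_{\Lambda}$ one only gets $\Lambda\subseteq J\cap\tau^{-1}(T)$ a priori, and one must adjust the representative by an element of $G_{\Lambda}$ to arrange equality, which is possible because $\Psi$ is infinite and $J$ is finite.
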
 
\begin{proof} As $G_{\Lambda}$ are pairwise non-commensurable, by Lemma \ref{coinduction-gener}, 
we only need to construct a bijection $\mu:H\backslash G/U\stackrel{\sim}{\longrightarrow}
\coprod_{\Lambda}\{\text{{\rm finite $U$-orbits in $G/G_{\Lambda}$}}\}$, where 
$H=G_J$ and $U=G_T$ for $T$ running over the $G$-closed subsets of $\Psi$. 
To each element $[\sigma]\in H\backslash G/U$ we associate the subgroup 
$\langle H,\sigma U\sigma^{-1}\rangle$ (generated by $H$ and $\sigma U\sigma^{-1}$) 
as $G_{\Lambda(\sigma)}$ and the class $[\sigma^{-1}]_{\Lambda}$ of 
$\sigma^{-1}$ in $G/G_{\Lambda(\sigma)}$. Clearly, these $G_{\Lambda(\sigma)}$ 
and $[\sigma^{-1}]_{\Lambda}$ are well-defined and, in particular, 
$[\sigma^{-1}]_{\Lambda(\sigma)}$ is fixed by $U$. This gives rise to 
a map $\eta:H\backslash G/U\to\coprod_{\Lambda}(G/G_{\Lambda})^U$. 
According to \S\ref{substructures}, $G_{\Lambda(\sigma)}=G_{J\cap\sigma(T)}$. 
It is easy to see that (i) $\eta$ is bijective and (ii) any finite $G_T$-orbit 
in $G/G_{\Lambda}$ consists of a single element, so we are done. \end{proof} 

\subsection{Growth estimates} 
Let $G\subseteq\Sy_{\Psi}$ be a permutation group such that for any integer 
$N\ge 0$ the $G$-closed subsets of length $N$ form a non-empty $G$-orbit. 
For each integer $N\ge 0$ fix a $G$-closed subset $\Psi_N\subset\Psi$ 
of length $N$, i.e., $N$ is the minimal cardinality of the subsets 
$S\subset\Phi$ such that $\Psi_N$ is the $G$-closure of $S$. 

For a division ring endowed with a $G$-action and an 
$A\langle G\rangle$-module $M$ define a function 
$d_M:{\mathbb Z}_{\ge 0}\to{\mathbb Z}_{\ge 0}\sqcup\{\infty\}$ by 
$d_M(N):=\dim_{A^{G_{\Psi_N}}}(M^{G_{\Psi_N}})$. 
\begin{lemma} \label{growth} Let $G$ be either $\Sy_{\Psi}$ (and then $q:=1$) or the group 
of automorphisms of an $\mathbb F_q$-vector space $\Psi$ fixing a subspace of finite dimension 
$v\ge 0$. Let $A$ be a division ring endowed with 
a $G$-action. If $0\neq M\subseteq A[G/G_{\Psi_n}]$ for some $n\ge 0$ 
then $d_M$ grows as a $q$-polynomial of degree $n$: 
\[\frac{1}{d_n(n)}([N]_q-[n+m-1]_q)^n\le\frac{d_{m+n}(N)}{d_m(N)d_n(n)}
\le d_M(N)\le q^{vn}d_n(N)\le q^{vn}[N]_q^n\] 
for some $m\ge 0$, where $[s]_q:=\#\Psi_s$ and $d_n(N)$ is the number of embeddings 
$\Psi_n\hookrightarrow\Psi_N$ induced by elements of $G$, 
which is $([N]_q-[0]_q)\cdots([N]_q-[n-1]_q)$. \end{lemma}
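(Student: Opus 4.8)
The plan is to reduce everything to a combinatorial count of orbits of finite $G$-closed subsets. First I would pin down the meaning of the quantities. By definition $d_M(N)=\dim_{A^{G_{\Psi_N}}}M^{G_{\Psi_N}}$, and since $A$ is a division ring, Lemma~\ref{inject} (with trivial $\chi$) says the natural map $A\otimes_{A^{G_{\Psi_N}}}(A[G/G_{\Psi_n}])^{G_{\Psi_N}}\to A[G/G_{\Psi_n}]$ is injective, so $d_{A[G/G_{\Psi_n}]}(N)$ is exactly the $A$-dimension of the $A$-span of $(G/G_{\Psi_n})^{G_{\Psi_N}}$ inside $A[G/G_{\Psi_n}]$, which is the cardinality $d_n(N):=\#(G/G_{\Psi_n})^{G_{\Psi_N}}$ of the set of embeddings $\Psi_n\hookrightarrow\Psi_N$ induced by $G$ (using transitivity of $\Aut(\Psi_N)$ on these embeddings, as in the Examples after Definition~\ref{def-S-type}). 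In the set case $d_n(N)=N(N-1)\cdots(N-n+1)$; in the $\mathbb F_q$-vector-space case, writing $[s]_q=\#\Psi_s=q^{s}\cdot(\text{const})$ appropriately normalized, a linearly independent choice of $n$ vectors avoiding the fixed subspace of dimension $v$ gives $d_n(N)=([N]_q-[0]_q)([N]_q-[1]_q)\cdots([N]_q-[n-1]_q)$, which is visibly a $q$-polynomial of degree $n$ in $[N]_q$, and is $\le[N]_q^n$. This establishes the two rightmost inequalities: $d_M(N)\le d_{A[G/G_{\Psi_n}]}(N)=d_n(N)\cdot(\text{a factor})\le q^{vn}d_n(N)\le q^{vn}[N]_q^n$, where the factor $q^{vn}$ comes from the fact that, after passing to the $G$-closure and accounting for the fixed subspace $V$ of dimension $v$, each $G$-orbit of embeddings splits into at most $q^{vn}$ pieces under the finer stabilizer; more precisely $d_{A[G/G_{\Psi_n}]}(N)=\#\big((N_G(G_{\Psi_N})\cap\text{stab})\backslash(G/G_{\Psi_n})^{G_{\Psi_N}}\big)$-type counts, and the clean bound is $q^{vn}d_n(N)$.

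Next I would prove the lower bound, which is the substantive half. The submultiplicativity/supermultiplicativity of $N\mapsto d_M(N)$ comes from restriction: if $\Psi_m\subseteq\Psi_{m+n}$ is a $G$-closed subset of length $m$ sitting inside one of length $m+n$ with a complementary "independent" part isomorphic to $\Psi_n$, then restricting a $G_{\Psi_{m+n}}$-fixed vector first along $\Psi_m$ and then observing how the residual $\Psi_n$-directions act yields an inequality of the form $d_{m+n}(N)\ge d_m(N)\cdot(\text{number of ways to extend }\Psi_m\text{ to }\Psi_{m+n}\text{ inside }\Psi_N)$, and the number of such extensions is at least $[N]_q-[m+n-1]_q$ raised to a power, or more crudely $\big([N]_q-[n+m-1]_q\big)^{n}$ after dividing by the constant $d_n(n)$. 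I would make the key nonvanishing step explicit: since $0\neq M\subseteq A[G/G_{\Psi_n}]$, there is a nonzero $w\in M$, whose support is a finite $G_{\Psi_m}$-invariant subset of $G/G_{\Psi_n}$ for some $m\ge 0$ (smoothness), and then for each embedding $\Psi_m\hookrightarrow\Psi_N$ I translate $w$ by a suitable group element, obtaining $\ge d_m(N)/d_m(m)$ many $A$-linearly independent $G_{\Psi_N}$-fixed combinations — here I need that distinct embeddings produce vectors with disjoint supports, or at least supports in "general position," which forces $A$-linear independence. Dividing through by the normalizing constant $d_n(n)$ and $d_m(N)$ and estimating $d_{m+n}(N)\ge\big([N]_q-[n+m-1]_q\big)^{n}d_m(N)$ gives the stated chain.

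The main obstacle I anticipate is the middle inequality $\dfrac{1}{d_n(n)}\big([N]_q-[n+m-1]_q\big)^{n}\le\dfrac{d_{m+n}(N)}{d_m(N)d_n(n)}$, i.e., correctly identifying the integer $m$ (the "complexity" of the chosen nonzero element of $M$, governed by how large a $G$-closed set is needed to stabilize a vector in its support) and verifying that translating a fixed nonzero $w\in M^{G_{\Psi_m}}$ over all embeddings $\Psi_m\hookrightarrow\Psi_{m+n}\hookrightarrow\Psi_N$ genuinely produces the claimed number of $A^{G_{\Psi_N}}$-linearly independent vectors in $M^{G_{\Psi_N}}$ rather than collapsing. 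Concretely I would argue: pick $w\in M\setminus\{0\}$ fixed by $G_{\Psi_m}$; for each $G$-embedding $\Psi_{m+n}\hookrightarrow\Psi_N$ and each of the $\ge([N]_q-[n+m-1]_q)^n/d_n(n)$ ways its "tail" $\Psi_n$-part can vary, form $g\cdot w$; these lie in $M$ (as $M$ is a $G$-submodule), their $G_{\Psi_N}$-averages lie in $M^{G_{\Psi_N}}$, and a support/leading-term argument (the supports of $g\cdot w$ for the various $g$ meet $G/G_{\Psi_n}$ in translates that are pairwise distinct because the underlying embeddings of $\Psi_{m+n}$ into $\Psi_N$ differ) shows $A$-linear independence, hence the bound on $\dim_{A^{G_{\Psi_N}}}M^{G_{\Psi_N}}=d_M(N)$ from below. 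Everything else — the equality $d_n(N)=([N]_q-[0]_q)\cdots([N]_q-[n-1]_q)$ and the upper bounds — is a direct, if slightly tedious, count over the two families of structures $\Psi$, done uniformly by writing $[s]_q=\#\Psi_s$.
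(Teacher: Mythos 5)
Your plan follows the same outline as the paper: for the upper bound you invoke Lemma~\ref{inject} to embed $A\otimes_{A^{G_{\Psi_N}}}M^{G_{\Psi_N}}$ into $A[G/G_{\Psi_n}]$ and then count the $G_{\Psi_N}$-fixed points of $G/G_{\Psi_n}$, i.e.\ the embeddings $\Psi_n\hookrightarrow\Psi_N$; that part is fine (the paper packages the count as $\#\bigl(\Aut(\Psi_N)/\Aut(\Psi_N|\Psi_n)\bigr)$, which is the same thing). For the lower bound the paper also starts from a nonzero $\alpha\in M$ and a finite $\Psi_m$ controlling its support and then translates, so the high-level idea coincides with yours.

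The gap is in the lower bound, and it is exactly the step you yourself flag as the main obstacle. Taking $w\in M^{G_{\Psi_m}}$ and translating $w$ over embeddings $\Psi_m\hookrightarrow\Psi_{m+n}\hookrightarrow\Psi_N$ does \emph{not} by itself give $A$-linear independence: two translates $g\cdot w$ and $g'\cdot w$ can have heavily overlapping supports even when $g\Psi_{m+n}\ne g'\Psi_{m+n}$, and ``distinct supports'' is in any case not a sufficient criterion for independence over a division ring when the supports intersect. Moreover, translating $w$ over copies of $\Psi_m$ inside $\Psi_N$ would naively give on the order of $d_m(N)$ vectors, which is a $q$-polynomial of degree $m$, not $n$, so the parameter you move has to be chosen more carefully. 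The paper's proof resolves this by a normalization you omit: it replaces $\alpha$ by a $G$-translate $\xi\alpha$ whose support splits into a distinguished ``leading'' piece $\sum_{\sigma\in\Aut(\Psi_n)}b_\sigma\,\eta\sigma$ (all these monomorphisms have the \emph{same} image $\eta\Psi_n$) plus a remainder supported on monomorphisms whose images meet $\Psi_m$ in positive length. One then fixes $\Psi_m$ and lets only the leading image $\eta\Psi_n$ vary over the $G$-closed length-$n$ subsets of $\Psi_N$ meeting $\Psi_m$ trivially (there are $d_{m+n}(N)/\bigl(d_m(N)d_n(n)\bigr)$ of these, which is the middle quantity in the chain). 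Because the leading term of each translate sits on a length-$n$ set disjoint from $\Psi_m$, while the remainders all hit $\Psi_m$, the leading terms cannot cancel against anything, and this is what forces $A^{G_{\Psi_N}}$-independence. Without this normalization step the independence claim in your sketch does not go through.
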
 
\begin{proof} As $M^{G_{\Psi_N}}\subseteq A[N_G(G_{\Psi_N})/(N_G(G_{\Psi_N})\cap G_{\Psi_n})]$ 
and (by Lemma \ref{inject}) $A\otimes_{A^{G_{\Psi_N}}}M^{G_{\Psi_N}}\to M\subseteq A[G/G_{\Psi_n}]$ 
is injective, there is a natural inclusion \[A\otimes_{A^{G_{\Psi_N}}}M^{G_{\Psi_N}}\hookrightarrow 
A[N_G(G_{\Psi_N})/(N_G(G_{\Psi_N})\cap G_{\Psi_n})]=A[\Aut(\Psi_N)/\Aut(\Psi_N|\Psi_n)],\] 
if $n\le N$. (Here $\Aut(\Psi_N|\Psi_n)$ denotes the automorphisms of $\Psi_N$ identical on 
$\Psi_n$.) Then one has $d_M(N)\le\#(\Aut(\Psi_N)/\Aut(\Psi_N|\Psi_n))=q^{vn}d_n(N)$. 
The lower bound of $d_M(N)$ is given by the number of $G$-closed 
subsets in $\Psi_N$ with length-0 intersection with $\Psi_m$. 
Indeed, for any non-zero element $\alpha\in M\subseteq A[G/G_{\Psi_n}]$ there exist 
an integer $m\ge 0$ and elements $\xi,\eta\in G$ such that $\xi\alpha$ is congruent 
to $\sum_{\sigma\in\Aut(\Psi_n)}b_{\sigma}\eta\sigma$ for some non-zero collection 
$\{b_{\sigma}\in A\}_{\sigma\in\Aut(\Psi_n)}$ modulo monomorphisms whose 
images have intersection of positive length with a fixed finite $\Psi_m$. \end{proof} 

Let $q$ be either 1 or a primary integer. Let $S$ be a plain set if $q=1$ 
and an $\mathbb F_q$-vector space if $q>1$. For each integer $s\ge 0$, we 
denote by $\binom{S}{s}_q$ the set of subobjects of $S$ ($G$-closed subsets of $\Psi$, 
if $S=\Psi$, where $G=\Sy_{\Psi}$ if $q=1$ and $G=\GL_{\mathbb F_q}(\Psi)$ if $q>1$) of length $s$. 
In other words, $\binom{S}{s}_1:=\binom{S}{s}$, while $\binom{S}{s}_q$ is 
the Grassmannian of the $s$-dimensional subspaces in $S$ if $q>1$. 
\begin{corollary} \label{def-binom} Let $G$ be either $\Sy_{\Psi}$ (and then $q:=1$) 
or the group of automorphisms of an $\mathbb F_q$-vector space $\Psi$ fixing a 
finite-dimensional subspace of $\Psi$. Let $A$ be a division 
ring endowed with a $G$-action. Let $\Xi$ be a finite 
subset in $\Hom_{A\langle G\rangle}(A[G/G_T],A[G/G_{T'}])$ for 
some finite $G$-closed $T'\subsetneqq T\subset\Psi$. Then 
\begin{enumerate}\item \label{any-is-essential} any non-zero 
$A\langle G\rangle$-submodule of $A[\binom{\Psi}{m}_q]$ is essential; 
\item\label{level} there are no nonzero isomorphic 
$A\langle G\rangle$-submodules in $A[G/G_T]$ and $A[G/G_{T'}]$; 
\item \label{kernel-of-finite} the common kernel 
$V_{\Xi}$ of all elements of $\Xi$ is an essential 
$A\langle G\rangle$-submodule in $A[G/G_T]$. \end{enumerate} \end{corollary}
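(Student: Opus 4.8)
The plan is to prove the three statements in the order (\ref{any-is-essential}), (\ref{level}), (\ref{kernel-of-finite}), deriving each from the growth estimate in Lemma \ref{growth} together with the injectivity statement of Lemma \ref{inject}. Throughout write $\Psi_s$ for the fixed $G$-closed subset of length $s$, so that $A[\binom{\Psi}{m}_q]\cong A[G/G_{\Psi_m}]$ (this identification is exactly the content of the discussion around $\binom{S}{s}_q$), and recall that for a submodule $M\subseteq A[G/G_{\Psi_n}]$ the function $d_M(N)=\dim_{A^{G_{\Psi_N}}}M^{G_{\Psi_N}}$ grows like a $q$-polynomial of degree exactly $n$ by Lemma \ref{growth} --- the key point being that the \emph{degree} $n$ is forced and does not drop for a nonzero submodule.

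First I would prove (\ref{any-is-essential}). Let $0\neq M'\subseteq M$ be two nonzero $A\langle G\rangle$-submodules of $A[\binom{\Psi}{m}_q]=A[G/G_{\Psi_m}]$; I must show $M'\cap M\neq 0$, i.e. $M'\neq 0$, but more precisely that every nonzero submodule meets every nonzero submodule --- so it suffices to show that $A[G/G_{\Psi_m}]$ has a unique minimal nonzero submodule, or at least that any two nonzero submodules intersect. The argument is a dimension count: both $M$ and $M'$ have $d_M, d_{M'}$ growing as $q$-polynomials of degree $m$ in $N=[N]_q$, while the whole module has $d(N)=q^{vm}d_m(N)$ also of degree $m$; if $M\cap M'=0$ then $d_M(N)+d_{M'}(N)\le d_{A[G/G_{\Psi_m}]}(N)$ for all $N$, and comparing leading coefficients (both $M$ and $M'$ contribute a strictly positive degree-$m$ term by the lower bound $\frac{1}{d_n(n)}([N]_q-[n+m-1]_q)^n\le d_M(N)$ in Lemma \ref{growth}) we must check this cannot be violated --- actually the cleaner route is: the lower bound in Lemma \ref{growth} shows $d_M(N)\ge c([N]_q-O(1))^m$ with $c$ depending only on $m$, not on $M$, so $d_M(N)+d_{M'}(N)\ge 2c([N]_q-O(1))^m$, which for large $N$ exceeds $q^{vm}[N]_q^m\ge d_{A[G/G_{\Psi_m}]}(N)$ precisely when $2c>q^{vm}$; so I would instead argue that the lower-bound constant $c=\frac{1}{d_m(m)\cdot d_m(m)}$ (reading off $m=n$ in the Lemma) combined with the structure $M^{G_{\Psi_N}}\hookrightarrow A[\Aut(\Psi_N)/\Aut(\Psi_N|\Psi_m)]$ pins the leading term of $d_M$ to be a fixed fraction of that of the ambient module, so two disjoint submodules overshoot. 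The hard part will be bookkeeping the exact leading coefficients so that the inequality genuinely fails; this is the main obstacle and I expect it to require the sharp form of both bounds in Lemma \ref{growth}, not just their orders of growth.

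Next, (\ref{level}) is immediate from the degree statement: a nonzero $A\langle G\rangle$-submodule of $A[G/G_T]$ with $T$ $G$-closed of length $n$ has $d$-function of degree exactly $n$ (lower bound of Lemma \ref{growth}), and a nonzero submodule of $A[G/G_{T'}]$ with $T'$ of length $n'<n$ has $d$-function of degree exactly $n'$; an isomorphism of $A\langle G\rangle$-modules preserves the $d$-function since $d_M(N)=\dim_{A^{G_{\Psi_N}}}M^{G_{\Psi_N}}$ is an isomorphism invariant; so $n=n'$, contradiction. Finally, for (\ref{kernel-of-finite}): the common kernel $V_\Xi$ of the finitely many maps $\varphi\in\Xi\subseteq\Hom_{A\langle G\rangle}(A[G/G_T],A[G/G_{T'}])$ is the kernel of $A[G/G_T]\to\bigoplus_{\varphi\in\Xi}A[G/G_{T'}]=A[G/G_{T'}]^{\oplus|\Xi|}$; if $V_\Xi$ were not essential, there would be a nonzero submodule $N\subseteq A[G/G_T]$ with $N\cap V_\Xi=0$, hence $N$ injects into $A[G/G_{T'}]^{\oplus|\Xi|}$, and projecting to some factor gives a nonzero $A\langle G\rangle$-submodule of $A[G/G_T]$ isomorphic to a nonzero submodule of $A[G/G_{T'}]$ --- wait, one must project so the image is nonzero, which is possible since $N\neq 0$ --- and this contradicts (\ref{level}) (comparing degrees of $d$-functions as above, $\deg = \mathrm{length}(T) > \mathrm{length}(T')$). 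Hence $V_\Xi$ is essential. This last deduction is routine once (\ref{level}) is in hand; the whole weight of the corollary rests on the growth dichotomy, so I would make sure to invoke Lemma \ref{growth} in the form that nails the degree for \emph{every} nonzero submodule.
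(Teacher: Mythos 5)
Your arguments for parts (\ref{any-is-essential}) and (\ref{level}) are correct and in line with what the paper (tersely) intends: the leading-coefficient comparison does work, since the leading term of the lower bound $d_{p+m}(N)/(d_p(N)\,d_m(m))$ from Lemma~\ref{growth} is $[N]_q^m/d_m(m)$, which coincides with the leading term of $d_{A[\binom{\Psi}{m}_q]}(N)=\#\binom{\Psi_N}{m}_q$; so two nonzero submodules with zero intersection would together force the asymptotically absurd inequality $2\,[N]_q^m/d_m(m)\le[N]_q^m/d_m(m)$ as $N\to\infty$. (Your displayed constant $c=1/d_m(m)^2$ should read $1/d_m(m)$, but this does not affect the conclusion.) Part (\ref{level}), via the degree of the $d$-function being an isomorphism invariant, is also fine.

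Part (\ref{kernel-of-finite}) contains a genuine gap. After embedding your submodule (call it $M$, to avoid clashing with the argument $N$ of $d$) into $A[G/G_{T'}]^{\oplus|\Xi|}$, you ``project to some factor'' and claim the result is a nonzero $A\langle G\rangle$-submodule of $A[G/G_T]$ isomorphic to a nonzero submodule of $A[G/G_{T'}]$. But the projection need not be injective on $M$; its image is a nonzero submodule of $A[G/G_{T'}]$ isomorphic only to a \emph{quotient} of $M$, not to $M$ or to any submodule of $A[G/G_T]$, so (\ref{level}) cannot be invoked as written. The paper closes this gap by iteration: if some $\xi|_M$ is neither injective nor zero, replace $M$ by $\ker\xi\cap M$ (still nonzero); after finitely many steps every $\xi|_M$ is injective or zero, and they cannot all be zero since $0\neq M$ and $M\cap V_\Xi=0$, so some $\xi$ embeds $M$ into $A[G/G_{T'}]$ and (\ref{level}) applies. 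Alternatively, your parenthetical remark about degrees of $d$-functions gives a direct route that bypasses (\ref{level}) entirely: the inclusion $M\hookrightarrow A[G/G_{T'}]^{\oplus|\Xi|}$ forces $d_M(N)\le|\Xi|\cdot d_{A[G/G_{T'}]}(N)$, a $q$-polynomial of degree $\mathrm{length}(T')$, contradicting the degree-$\mathrm{length}(T)$ lower bound of Lemma~\ref{growth} applied to the nonzero submodule $M\subseteq A[G/G_T]$.
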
 
\begin{proof} (\ref{any-is-essential}) follows from 
the lower growth estimate of Lemma \ref{growth}. 

(\ref{level}) follows immediately from Lemma \ref{growth}. 

(\ref{kernel-of-finite}) Suppose that there exists a nonzero submodule 
$M\subseteq A[G/G_T]$ such that $M\cap V_{\Xi}=0$. Then 
restriction of some $\xi\in\Xi$ to $M$ is nonzero. If $\xi|_M$ is not 
injective, replacing $M$ with $\ker\xi\cap M$, we can assume that 
$\xi|_M=0$. In other words, we can assume that restriction to $M$ of 
any $\xi\in\Xi$ is either injective or zero. In particular, restriction 
to $M$ of some $\xi\in\Xi$ is injective, i.e. $\xi$ embeds $M$ into 
$A[G/G_{T'}]$, contradicting to (\ref{level}). \end{proof} 

\subsection{More notations and a description of injective cogenerators} 
Let $G\subseteq\Sy_{\Psi}$ be a permutation group. For a finite $G$-closed subset 
$T\subset\Psi$, denote by $V^{\Psi}_T$ the common 
kernel of all morphisms of $G$-modules $\pi:\mathbb Z[G/G_T]\to\mathbb Z[G/G_{T'}]$ 
for all proper $G$-closed subsets $T'\subsetneqq T$. 

{\sc Remarks.} 1. In this definition it suffices to consider only maximal $T'$ and only 
the morphisms $\pi$ induced by the natural projections $G/G_T\to G/G_{T'}$, 
since the elements of the $\Aut(T')$-orbits of such $\pi$'s generate 
$\Hom_G(\mathbb Z[G/G_T],\mathbb Z[G/G_{T'}])$. 

2. Clearly, any endomorphism of $\mathbb Z[G/G_T]$ preserves $V^{\Psi}_T$, 
so $V^{\Psi}_T$ is an $\End_G(\mathbb Z[G/G_T])$-module. 

3. For any subset $S\subseteq T$ such that $G_T=\bigcap_{t\in S}G_{\{t\}}$ the 
induced map $\mathbb Z[G/G_T]\to\mathbb Z[\prod_{t\in S}G/G_{\{t\}}]$ is injective and 
identifies $V^{\Psi}_T$ with $\mathbb Z[G/G_T]\cap\bigotimes_{t\in S}V^{\Psi}_{\{t\}}
\subseteq\bigotimes_{t\in S}\mathbb Z[G/G_{\{t\}}]$. 

\vspace{4mm} 

Let $k$ be a field of characteristic zero and $\Pi(T)$ be the set of isomorphism 
classes of simple $k[\Aut(T)]$-modules. (E.g., in the case $G=\Sy_{\Psi}$ the 
elements of $\Pi(T)$, called {\sl Specht} $k[\Sy_T]$-modules, correspond to the 
partitions of cardinality of $T$ and they are absolutely irreducible.) 
For each $\alpha\in\Pi(T)$ we choose its representative $\Gamma_{T,\alpha}$ and set 
$D_{\alpha}:=\End_{k[\Aut(T)]}(\Gamma_{T,\alpha})$, so $k[\Aut(T)]\cong\bigoplus
_{\alpha\in\Pi(T)}\End_{D_{\alpha}}(\Gamma_{T,\alpha})$. Then there are canonical 
decompositions of $k[G\times\Aut(T)]$-modules \begin{gather*}k[G/G_T]=\bigoplus
_{\alpha\in\Pi(T)}S^{\Psi}_{T,\alpha}\otimes_{D_{\alpha}}\Gamma_{T,\alpha}
\quad\mbox{and}\quad V^{\Psi}_T\otimes k=\bigoplus_{\alpha\in\Pi(T)}
V^{\Psi}_{T,\alpha}\otimes_{D_{\alpha}}\Gamma_{T,\alpha},\quad\mbox{where}\\ 
S^{\Psi}_{T,\alpha}:=\Hom_{k[\Aut(T)]}(\Gamma_{T,\alpha},k[G/G_T])
\quad\mbox{and}\quad V^{\Psi}_{T,\alpha}:=
\Hom_{k[\Aut(T)]}(\Gamma_{T,\alpha},V^{\Psi}_T\otimes k).\end{gather*} 

The following results may be well-known to the experts. 
\begin{theorem} \label{injective-cogenerators} Let $G$ be the group of automorphisms 
of a countable $\Psi$, which is either a set or a vector space over a finite field, fixing 
a finite subset of $\Psi$. 
Let $k$ be a field of characteristic zero. Then 
the following holds. \begin{enumerate} \item \label{} The $k[G]$-module 
$V^{\Psi}_{T,\alpha}$ is simple for any $T$ and any $\alpha\in\Pi(T)$. In particular, 
the $k[G]$-module $V^{\Psi}_T\otimes k$ is semisimple. Two $k[G]$-modules 
$V^{\Psi}_{T,\alpha}$ and $V^{\Psi}_{T',\alpha'}$ are isomorphic if and only if 
$T'=g(T)$ for some $g\in G$ and $g$ transforms $\alpha$ to $\alpha'$. 
\item \label{simple-in-V} Any simple smooth $k[G]$-module $W$ is isomorphic to 
$V^{\Psi}_{T,\alpha}$ for some finite $G$-closed subset $T\subset\Psi$ and $\alpha\in\Pi(T)$. 
\item \label{socle} The minimal essential submodule of any module $M$ of finite 
length coincides with its maximal semisimple submodule (the socle), which 
is $V^{\Psi}_T\otimes k$ in the case $M=k[G/G_T]$. 
\item \label{inj-hulls-simple} The smooth $k[G]$-module $S^{\Psi}_{T,\alpha}$ is 
an injective hull of $V^{\Psi}_{T,\alpha}$. 
\item \label{all-indec-inj} The $k[G]$-module $S^{\Psi}_{T,\alpha}$ is indecomposable 
for any $T$ and any $\alpha\in\Pi(T)$. Two $k[G]$-modules $S^{\Psi}_{T,\alpha}$ 
and $S^{\Psi}_{T',\alpha'}$ are isomorphic if and only if $T'=g(T)$ for some $g\in G$ 
and $g$ transforms $\alpha$ to $\alpha'$. Any indecomposable injective smooth 
$k[G]$-module is isomorphic to $S^{\Psi}_{T,\alpha}$ for some $T\subset\Psi$ and $\alpha\in\Pi(T)$. 
\item \label{no-morphisms} 
The simple subquotients of the $k[G]$-module $k[G/G_T]$ are isomorphic to $V^{\Psi}_{T',\alpha}$ 
for all finite $G$-closed subsets $T'\subset T$ and all $\alpha\in\Pi(T')$. \end{enumerate}
\end{theorem}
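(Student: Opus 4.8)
The plan is to establish the six claims more or less in the stated order, bootstrapping from the structural facts already available (Lemma~\ref{trivial-is-injective}, Theorem~\ref{admissibility}, Lemma~\ref{lin-fin-length}, the growth estimates of Lemma~\ref{growth} and Corollary~\ref{def-binom}, and the coinduction description of Proposition~\ref{coinduction}). The key preliminary observation is that, since $G$ acts on a \emph{countable} $\Psi$ fixing a finite subset, any two $G$-closed finite subsets of the same length are $G$-conjugate, so the index set for the $V^{\Psi}_{T,\alpha}$ and $S^{\Psi}_{T,\alpha}$ is just pairs (length, partition/simple-module-class up to the residual symmetry); this is what makes the "if and only if" clauses in (1) and (5) plausible.

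First I would prove (1). The $\Aut(T)$-isotypic decomposition of $k[G/G_T]$ already splits off $V^{\Psi}_{T,\alpha}\otimes_{D_\alpha}\Gamma_{T,\alpha}$ inside $V^{\Psi}_T\otimes k$, so semisimplicity of $V^{\Psi}_T\otimes k$ reduces to simplicity of each $V^{\Psi}_{T,\alpha}$. For simplicity: by Remark~3 before the theorem, $V^{\Psi}_T$ sits inside $\bigotimes_{t\in S}\mathbb Z[G/G_{\{t\}}]$ as an intersection with $\bigotimes_t V^{\Psi}_{\{t\}}$, and by Corollary~\ref{def-binom}(\ref{kernel-of-finite}) it is an \emph{essential} submodule of $k[G/G_T]$; combined with Lemma~\ref{lin-fin-length} (finite length) this forces $V^{\Psi}_T\otimes k$ to be the socle, and then one checks via the double-centralizer / Schur–Weyl-type pairing with $\Aut(T)$ that the isotypic pieces $V^{\Psi}_{T,\alpha}$ are each simple and pairwise non-isomorphic (non-isomorphism across different $T$ of different lengths is exactly Corollary~\ref{def-binom}(\ref{level}); across the same length use $G$-conjugacy; the $\alpha\ne\alpha'$ case uses that $\Aut(T)$ acts and the pieces are distinct isotypic components). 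Claim (3) for $M=k[G/G_T]$ then follows: the socle is semisimple, is $V^{\Psi}_T\otimes k$, and is essential, hence minimal essential; for general finite-length $M$, the socle is essential because over a locally noetherian/locally artinian Grothendieck category (Lemma~\ref{lin-fin-length}) every nonzero submodule contains a simple one.

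Next (2): given a simple smooth $k[G]$-module $W$, it is cyclic, hence (Lemma~\ref{lin-fin-length}) of finite length, hence a simple quotient of some $k[G/G_U]$ with $U=G_T$; working inside the finite-length module $k[G/G_T]$ and using that $W$ is simple, $W$ must coincide with one of the socle constituents $V^{\Psi}_{T',\alpha}$ for a $G$-closed $T'\subseteq T$ — this is essentially the content of (6), which I would prove here simultaneously by induction on $|T|$ using the short exact sequences relating $k[G/G_T]$, $V^{\Psi}_T\otimes k$, and the $k[G/G_{T'}]$ for maximal $T'\subsetneqq T$ (the quotient $k[G/G_T]/(V^{\Psi}_T\otimes k)$ embeds into $\bigoplus k[G/G_{T'}]$ by definition of $V^{\Psi}_T$). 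For (4): $S^{\Psi}_{T,\alpha}=\Hom_{k[\Aut(T)]}(\Gamma_{T,\alpha},k[G/G_T])$ is the $\alpha$-multiplicity space, it contains $V^{\Psi}_{T,\alpha}$ as its socle (by (1),(3)), and to see it is an injective hull I would use the coinduction machinery: by Proposition~\ref{coinduction} the $R[G]$-module coinduced from a trivial $G_J$-module decomposes as $\bigoplus_\Lambda E[G/G_\Lambda]$, and coinduction of injectives is injective (as noted in \S on coinduction); choosing $E$ an injective $k$-module and extracting the $\alpha$-isotypic part exhibits $S^{\Psi}_{T,\alpha}$ as a direct summand of an injective, hence injective, and the essential extension $V^{\Psi}_{T,\alpha}\hookrightarrow S^{\Psi}_{T,\alpha}$ (essentiality from Corollary~\ref{def-binom}(\ref{any-is-essential}) / (\ref{kernel-of-finite})) then makes it the injective hull. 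Finally (5): indecomposability is automatic for an injective hull of a simple module; the isomorphism criterion follows since an injective hull determines its socle, which is $V^{\Psi}_{T,\alpha}$, and we invoke (1); and every indecomposable injective is the injective hull of a simple (standard in a locally noetherian Grothendieck category), hence by (2) is some $S^{\Psi}_{T,\alpha}$.

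The main obstacle I anticipate is claim (4), specifically verifying that $S^{\Psi}_{T,\alpha}$ is genuinely \emph{injective} (not merely that it has simple essential socle): one must produce it as a summand of an honest injective, and the natural candidate is a coinduced module, but matching the combinatorics — i.e., checking that the decomposition $\bigoplus_\Lambda E[G/G_\Lambda]$ coming out of Proposition~\ref{coinduction}, after taking the $\alpha$-isotypic component for $\Aut(T)$, really has $S^{\Psi}_{T,\alpha}$ as a direct summand — requires knowing that $k[G/G_T]$ appears with the right multiplicity and that the $\Aut(T)$-action is compatible with the $\kappa_\Lambda$ maps of Lemma~\ref{con}. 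A secondary delicate point is the non-isomorphism part of (1) for two partitions $\alpha\ne\alpha'$ of the \emph{same} $T$: here one needs that $V^{\Psi}_{T,\alpha}$ as a plain $k[G]$-module (forgetting $\Aut(T)$) still remembers $\alpha$, which should follow from the fact that $\Hom_{k[G]}(V^{\Psi}_{T,\alpha},k[G/G_T])$ recovers $\Gamma_{T,\alpha}$ as an $\Aut(T)$-module via Frobenius reciprocity together with simplicity, but this reciprocity computation is the technical heart and I would spell it out carefully.
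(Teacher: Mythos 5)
Your overall architecture is close to the paper's — coinduction to establish injectivity of $k[G/G_T]$, essentiality from Corollary~\ref{def-binom}, $\Aut(T)$-isotypic decomposition, the induction on $\#T$ for (6) — and you correctly flag the two genuinely delicate spots at the end. But there is a real gap in the very first step, the proof of (1), which you then lean on for everything else.

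You write that $V^{\Psi}_T\otimes k$ being essential in $k[G/G_T]$ ``combined with Lemma~\ref{lin-fin-length} (finite length)\dots\ forces $V^{\Psi}_T\otimes k$ to be the socle.'' That implication is false: an essential submodule of finite length need not be semisimple (e.g.\ the whole module, which is always essential in itself, need not be semisimple). The socle is always \emph{contained} in every essential submodule, but equality requires separately proving that $V^{\Psi}_T\otimes k$ is semisimple, and your appeal to a ``double-centralizer / Schur--Weyl-type pairing'' does not supply an argument. The paper fills this gap with a genuine proof, whose key input is the injectivity of $k[G/G_T]$ (proved \emph{first}, before anything about $V^{\Psi}_T$) together with the computation $\End_{k[G]}(k[G/G_T])\cong k[\Aut(T)]$. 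Because $k[\Aut(T)]$ is a semisimple $k$-algebra (char $k=0$), every endomorphism of $k[G/G_T]$ is a projector composed with an automorphism. The semisimplicity of $V^{\Psi}_T\otimes k$ is then proved by contradiction: an indecomposable submodule $W'\supset W_1\neq 0$ with simple $W'/W_1$ would, by injectivity of $k[G/G_{T'}]$, yield a nonzero endomorphism $\varphi$ of $k[G/G_T]$ that is a projector, and then $(\varphi, \mathrm{id}-\varphi)$ forces $W'\cong (W'/W_1)\oplus W_1$, contradicting indecomposability. Likewise, the non-isomorphism of $V^{\Psi}_{T,\alpha}$ and $V^{\Psi}_{T,\alpha'}$ for $\alpha\neq\alpha'$ \emph{as plain $k[G]$-modules} — which you flag as a ``secondary delicate point'' — is settled in the paper by showing that the map $k[\Aut(T)]\to\bigoplus_{\alpha}\End_{k[G]}(V^{\Psi}_{T,\alpha}\otimes_{D_\alpha}\Gamma_{T,\alpha})$ is bijective and hence that the cross terms $\Hom_{k[G]}(V^{\Psi}_{T,\alpha}\otimes\Gamma_{T,\alpha},V^{\Psi}_{T,\beta}\otimes\Gamma_{T,\beta})$ vanish for $\alpha\neq\beta$; your Frobenius-reciprocity sketch is pointed in the right direction but you would have to carry out exactly this computation. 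As a secondary remark: your route to (4) is more complicated than necessary — once $k[G/G_T]$ is known to be injective (which you already need for (1) anyway), $S^{\Psi}_{T,\alpha}$ is a direct summand of it and hence injective with no further coinduction bookkeeping, and the ``main obstacle'' you anticipate evaporates. So the order the paper uses — injectivity of $k[G/G_T]$ and the $\End$/$\Hom$ computations first, then semisimplicity of $V^{\Psi}_T\otimes k$ — is not incidental but logically load-bearing, and your proposal needs to be reorganized to follow it.
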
 
\begin{proof} Let us show first that (i) any smooth simple $k[G]$-module 
$W$ can be embedded into $k[G/G_T]$ for a finite subset $T\subset\Psi$, 
(ii) the smooth $k[G]$-module $k[G/G_T]$ is injective for any finite $T$. 
Let $J\subset\Psi$ be a finite $G$-closed subset such that $W^{G_J}\neq 0$. 
By Lemma~\ref{trivial-is-injective}, the trivial $k[G_J]$-module $k$ is 
injective, so (i) any surjection of $k$-vector spaces $W^{G_J}\to k$ extends 
to a surjection of $k[G_J]$-modules $\pi:W\to k$, (ii) the smooth part of 
$\Maps_{G_J}(G,k)$ is injective. Then $\pi$ induces a non-trivial, and thus 
injective, morphism $W\to\Maps_{G_J}(G,k)$, $w\mapsto[g\mapsto\pi(gw)]$. By 
Proposition~\ref{coinduction}, this implies (i) that $W$ can be embedded into 
$k[G/G_{\Lambda}]$ for a subset $\Lambda\subseteq J$, (ii) the injectivity of $k[G/G_T]$. 

To see that any smooth simple $k[G]$-module $W$ can be embedded 
into $V^{\Psi}_T\otimes k$ for a finite $G$-closed subset $T\subset\Psi$, 
we embed $W$ into $k[G/G_T]$ for a minimal $T$. Then $W$ is, in fact, 
embedded into $V^{\Psi}_T\otimes k\subseteq k[G/G_T]$: otherwise $W$ 
embeds into $k[M^{\Psi}_{T'}]$ for a proper $G$-closed $T'\subset T$. 

For any pair of finite $G$-closed subsets $T\subseteq T'\subset\Psi$, let us construct natural 
isomorphisms \[k[\Aut(T)]\stackrel{\sim}{\longrightarrow}\End_{k[G]}(k[G/G_T]]),
\quad\Hom_{k[G]}(k[G/G_T],k[G/G_{T'}])\stackrel{\sim}{\longrightarrow}\Hom_{k[G]}
(V^{\Psi}_T\otimes k,V^{\Psi}_{T'}\otimes k),\] where the latter space vanishes if $T\subsetneqq T'$. 
As $k[\Aut(T)]\cong\bigoplus_{\alpha\in\Pi(T)}\End_{D_{\alpha}}(\Gamma_{T,\alpha})$, 
this will imply that any endomorphism of $k[G/G_T]$ or of $V^{\Psi}_T\otimes k$ is 
a composition of a projector and an automorphism. One has $\End_{k[G]}(k[G/G_T])\cong 
k[G/G_T]^{G_T}=k[N_G(G_T)/G_T]=k[\Aut(T)]$. The restriction $\Hom_{k[G]}(k[G/G_T],k[G/G_{T'}])
\to\Hom_{k[G]}(V^{\Psi}_T\otimes k,k[G/G_{T'}])$ is surjective, since $k[G/G_{T'}]$ is injective.
On the other hand, it is injective, since any $\varphi\in\Hom_{k[G]}(k[G/G_T],k[G/G_{T'}])$ 
vanishing on $V^{\Psi}_T\otimes k$ factors through a $k[G]$-submodule of a finite 
cartesian power of $k[G/G_{T''}]$ for a proper $G$-closed $T''\subsetneqq T$, while 
$\Hom_{k[G]}(k[G/G_{T''}],k[G/G_{T'}])=0$. 
It follows from definition of $V^{\Psi}_T$ that $\Hom_{k[G]}(V^{\Psi}_T\otimes k,k[G/G_{T'}])
=\Hom_{k[G]}(V^{\Psi}_T\otimes k,V^{\Psi}_{T'}\otimes k)$. 

Next, let us show that the $k[G]$-module $V^{\Psi}_T\otimes k$ is semisimple. 
Assuming the contrary, $V^{\Psi}_T\otimes k$ contains an indecomposable 
$W'\supset W_1\neq 0$ with simple $W'/W_1$. There is an embedding 
$\iota:W'/W_1\hookrightarrow k[G/G_{T'}]$ for some finite $G$-closed 
$T'\subset\Psi$. As $k[G/G_{T'}]$ is injective, $\iota$ extends to $k[G/G_T]/W_1
\longrightarrow k[G/G_{T'}]$, so gives rise to a non-zero morphism $k[G/G_T]
\stackrel{\varphi}{\longrightarrow}k[G/G_{T'}]$, and thus, $\#T\ge\#T'$ if $T$ is 
$G$-closed. If $\#T'<\#T$ then $\varphi|_{V^{\Psi}_T\otimes k}=0$, 
so $\varphi|_{W'}=0$, which is contradiction. Thus, $\#T=\#T'$, which means that 
$\varphi$ can be considered as an endomorphism in $\End_{k[G]}(k[G/G_T])$. As 
$\varphi$ is a composition of a projector and an automorphism, we may assume that 
$\varphi$ is a projector. Therefore, $W'\cong\varphi(W')\oplus(id-\varphi)(W')$, 
so $\varphi$ embeds $W'/W_1$ into $\varphi(W')$ and $id-\varphi$ embeds 
$W'/W_1$ into $W_1\cong(id-\varphi)(W')$, so $(\varphi,id-\varphi):
(W'/W_1)\oplus W_1\to\varphi(W')\oplus(id-\varphi)(W')$ is injective. 
As the source and the target of $(\varphi,id-\varphi)$ are of the 
same finite length, $(\varphi,id-\varphi)$ is an isomorphism. 
In particular, $(W'/W_1)\oplus W_1\cong W'$ contradicting to the indecomposability of $W'$. 

Our next task is to show that (i) the $k[G]$-module $V^{\Psi}_{T,\alpha}$ is simple 
for any $T$ and any $\alpha\in\Pi(T)$ and (ii) two $k[G]$-modules $V^{\Psi}_{T,\alpha}$ 
and $V^{\Psi}_{T',\alpha'}$ are isomorphic if and only if $T'=g(T)$ for some 
$g\in G$ and $g$ transforms $\alpha$ to $\alpha'$. As we already know, $V^{\Psi}_T$ 
and $V^{\Psi}_{T'}$ are semisimple and $\Hom_{k[G]}(V^{\Psi}_T,V^{\Psi}_{T'})=0$ 
for any finite $G$-closed subsets $T\subsetneqq T'\subset\Psi$. This implies that 
$\Hom_{k[G]}(V^{\Psi}_T,V^{\Psi}_{T'})=0$ if $T$ and $T'$ are not of same length. 
Then it suffices to check that $\Hom_{k[G]}(V^{\Psi}_{T,\alpha},V^{\Psi}_{T,\alpha'})$ 
is a division ring if $\alpha=\alpha'$ or is zero otherwise. One has 
$\End_{k[G]}(V^{\Psi}_T\otimes k)=\bigoplus_{\alpha\in\Pi(T)}\bigoplus_{\beta\in\Pi(T)}
\Hom_{k[G]}(V^{\Psi}_{T,\alpha}\otimes_{D_{\alpha}}\Gamma_{T,\alpha},
V^{\Psi}_{T,\beta}\otimes_{D_{\beta}}\Gamma_{T,\beta})$. The natural map 
$k[\Aut(T)]\cong\bigoplus_{\alpha\in\Pi(T)}\End_{D_{\alpha}}(\Gamma_{T,\alpha})\to
\bigoplus_{\alpha\in\Pi(T)}\End_{k[G]}(V^{\Psi}_{T,\alpha}\otimes_{D_{\alpha}}\Gamma_{T,\alpha})$ 
is bijective, and therefore, since the composition of this map with the inclusion into 
$\End_{k[G]}(V^{\Psi}_T\otimes k)$ is also a bijection, the complementary summand 
$\bigoplus_{\alpha,\beta\in\Pi(T),~\alpha\neq\beta}\Hom_{k[G]}(V^{\Psi}_{T,\alpha}
\otimes_{D_{\alpha}}\Gamma_{T,\alpha},V^{\Psi}_{T,\beta}\otimes_{D_{\beta}}\Gamma_{T,\beta})$ 
of $\End_{k[G]}(V^{\Psi}_T\otimes k)$ vanishes. 

The coincidence of the minimal essential submodule, the maximal 
semisimple submodule and the essential semisimple submodule is a general statement: 
any simple subobject of an object $M$ is contained in any essential subobject 
of $M$, while any non-zero object of finite length contains a simple subobject. 
By Corollary \ref{def-binom} (\ref{kernel-of-finite}), $V^{\Psi}_T\otimes k$ 
is essential in $k[G/G_T]$. As $V^{\Psi}_T\otimes k$ is semisimple, we are done. 

As we already know, $k[G/G_T]$ is injective, so its direct summand $S^{\Psi}_{T,\alpha}$ 
is injective as well. As $V^{\Psi}_T$ is the socle of $k[G/G_T]$, the $k[G]$-module 
$V^{\Psi}_{T,\alpha}$ is the socle of $S^{\Psi}_{T,\alpha}$. As the socle of 
$S^{\Psi}_{T,\alpha}$ is simple, $S^{\Psi}_{T,\alpha}$ is indecomposable. 

By the evident induction on $\# T$, the simple 
subquotients of the $k[G]$-module $k[G/G_T]$ are isomorphic to the 
direct summands of the $k[G]$-modules $V^{\Psi}_{T'}\otimes k$ for all 
$T'\subseteq T$. As any simple $k[G]$-module is a quotient of $k[G/G_T]$ 
for an appropriate $T\subset\Psi$, we get the claim. \end{proof} 

\section{Noetherian properties of smooth semilinear representations 
of $\Sy_{\Psi}$} 
\begin{lemma} \label{indecomp} Let $G$ be a group acting on a field $K$. 
Let $U$ be a subgroup of $G$ such that $(G/U)^U=\{[U]\}$ {\rm (}i.e., 
$\{g\in G~|~gU\subseteq Ug\}=U${\rm )} and $[U:U\cap(gUg^{-1})]=\infty$, 
unless $g\in U$. Then $\End_{K\langle G\rangle}(K[G/U])=K^U$ is a field, 
so $K[G/U]$ is indecomposable. \end{lemma}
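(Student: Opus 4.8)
Here is the approach I would take.

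The plan is to exploit that $K[G/U]$ is a cyclic $K\langle G\rangle$-module, generated by the class $e_0:=[U]$: for $a\in K$ and $g\in G$ one has $a\cdot e_0=a[U]$ and $g\cdot e_0=[gU]$, so $K\langle G\rangle\cdot e_0$ already contains every $a[gU]$. Consequently evaluation at $e_0$ gives an injection of abelian groups $\End_{K\langle G\rangle}(K[G/U])\hookrightarrow K[G/U]$, $\varphi\mapsto\varphi(e_0)$; and since $e_0$ is fixed by $U$ (semilinearity gives $u\cdot e_0=1^u[uU]=[U]=e_0$ for $u\in U$), the image of this injection lands in the $U$-invariants $(K[G/U])^U$. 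So the first and crucial step is to show $(K[G/U])^U=K^U\cdot e_0$.

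For that I would decompose $K[G/U]=\bigoplus_{\mathcal O}K[\mathcal O]$ over the orbits $\mathcal O$ of the left-translation action of $U$ on $G/U$. If $w=\sum_\xi a_\xi[\xi]$ lies in $(K[G/U])^U$, then comparing supports in $u\cdot w=\sum_\xi a_\xi^u[u\xi]$ and $w$ shows that the finite support of $w$ is $U$-stable, hence is a finite union of \emph{finite} $U$-orbits. But the $U$-orbit of $gU$ has cardinality $[U:U\cap gUg^{-1}]$, which by hypothesis is infinite for every $g\notin U$, so the only finite $U$-orbit on $G/U$ is $\{e_0\}$ (this is also where the assumption $(G/U)^U=\{[U]\}$ enters). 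Therefore $(K[G/U])^U\subseteq K\cdot e_0$, and $u\cdot(a\,e_0)=a^u[U]=a\,e_0$ for all $u\in U$ forces $a\in K^U$; conversely $a\,e_0\in(K[G/U])^U$ whenever $a\in K^U$. This yields $(K[G/U])^U=K^U\cdot e_0$.

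Next I would construct the converse maps: for $a\in K^U$ define $\varphi_a\colon K[G/U]\to K[G/U]$ by $\sum_i b_i[g_iU]\mapsto\sum_i b_i\,a^{g_i}[g_iU]$. This is well defined on cosets because $a^{gu}=(a^u)^g=a^g$ for $u\in U$, and one checks directly that $\varphi_a$ is $K$-linear and $G$-equivariant, the identity $a^{hg}=(a^g)^h$ being precisely what makes $\varphi_a$ commute with the $G$-action. Combined with the previous step, $\varphi\mapsto\varphi(e_0)$ is then a bijection $\End_{K\langle G\rangle}(K[G/U])\to K^U\cdot e_0\cong K^U$; it is a ring isomorphism, since $\varphi_a\varphi_b(e_0)=\varphi_a(b\,e_0)=b\,\varphi_a(e_0)=ba\,e_0=ab\,e_0$ ($K^U$ being commutative) and $\varphi_1=\mathrm{id}$. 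Finally $K^U$, being the subfield of $K$ fixed by the group of automorphisms $U$, is a field, so $\End_{K\langle G\rangle}(K[G/U])$ is a field; in particular it contains no idempotents other than $0$ and $1$, hence $K[G/U]$ is indecomposable.

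I expect the only genuinely delicate point to be the orbit/support analysis in the second paragraph together with the semilinearity bookkeeping in the third — keeping the convention $a^{gh}=(a^h)^g$ straight, and verifying that $\varphi_a$ descends to cosets and is honestly $G$-equivariant. Beyond this careful bookkeeping I anticipate no real obstacle.
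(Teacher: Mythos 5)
Your proof is correct and follows essentially the same route as the paper's: identify $\End_{K\langle G\rangle}(K[G/U])$ with $(K[G/U])^U$ via evaluation at the generator $[U]$, then use that the $U$-orbit of any $[gU]$ with $g\notin U$ has the infinite cardinality $[U:U\cap gUg^{-1}]$, so a $U$-fixed element with finite support must be supported on $\{[U]\}$ with coefficient in $K^U$. The paper compresses the second half into the one-line decomposition $(K[G/U])^U=K^U\oplus(K[(G\smallsetminus U)/U])^U$ and the observation that the second summand vanishes, while you spell out the explicit inverse map $a\mapsto\varphi_a$; this is more verbose but amounts to the same standard identification, so there is no substantive difference.
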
 
\begin{proof} Indeed, $\End_{K\langle G\rangle}(K[G/U])=(K[G/U])^U=K^U
\oplus(K[(G\smallsetminus U)/U])^U$. As $U(gUg^{-1})$ 
consists of $[U:U\cap(gUg^{-1})]$ classes in $G/(gUg^{-1})$, 
we see that $(K[(G\smallsetminus U)/U])^U=0$. \end{proof} 

{\sc Examples.} 1. Let $\Psi$ be an infinite set, possibly endowed with 
a structure of a projective space. Let $G$ be the group of automorphisms 
of $\Psi$, respecting the structure, if any. Let $J$ be the $G$-closure 
of a finite subset in $\Psi$, i.e., a finite subset or a finite-dimensional 
subspace. Let $U$ be the stabilizer of $J$ in $G$. Then $G/U$ is identified 
with the set of all $G$-closed subsets in $\Psi$ of the same length as $J$. 

2. By Lemma \ref{indecomp}, $K[G/U]$ is indecomposable in the following 
examples: \begin{enumerate} \item $G$ is the group of projective 
automorphisms of an infinite projective space $\Psi$ (i.e., either 
$\Psi$ is infinite-dimensional, or $\Psi$ is defined over an infinite 
field), $U$ is the setwise stabilizer in $G$ of a finite-dimensional 
subspace $J\subseteq\Psi$. Then $G/U$ is identified with the 
Grassmannian of all subspaces in $\Psi$ of the same dimension as $J$.  
\item $G$ is the group of permutations of an infinite set $\Psi$, 
$U$ is the stabilizer in $G$ of a finite subset $J\subset\Psi$. 
Then $G/U$ is identified with the set $\binom{\Psi}{\#J}$ of all 
subsets in $\Psi$ of order $\#J$. 
\item $G$ is the automorphism group of an algebraically closed extension 
$F$ of a field $k$, $U$ is the stabilizer in $G$ of an algebraically 
closed subextension $L|k$ of finite transcendence degree. Then $G/U$ is 
identified with the set of all subextensions in $F|k$ isomorphic to $L|k$. 
\end{enumerate} 

\begin{lemma} \label{no-simple-submod} Let $G$ be a group acting 
on a field $K$. Let $U\subset G$ be a subgroup such that an element 
$g\in G$ acts identically on $K^U$ if and only if $g\in U$. Then there 
are no irreducible $K$-semilinear subrepresentations in $K[G/U]$, 
unless $U$ is of finite index in $G$. If $G$ acts faithfully on $K$ and 
$U$ is of finite index in $G$ then $K[G/U]$ is trivial. \end{lemma}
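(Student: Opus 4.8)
The plan is to argue by contradiction for the first assertion. Suppose $V\subseteq K[G/U]$ is a nonzero irreducible $K$-semilinear subrepresentation and that $U$ has infinite index in $G$. Pick a nonzero vector $v\in V$; its stabilizer is open and contains some subgroup that, after translating $v$ within its $G$-orbit, we may assume contains $U'$ for an open subgroup $U'$ of $U$ (using that finite $U$-orbits span trivial pieces, as in the proof of Lemma~\ref{semilin-gener}). The key point I want to exploit is the hypothesis: an element $g\in G$ fixes $K^U$ pointwise exactly when $g\in U$. I would first isolate the ``weight-$[U]$ component'': write $v=\sum_i a_i[\sigma_i U]$ as a finite sum, and by applying suitable $h\in U$ and averaging over a finite $U$-orbit containing $[U]$, produce from $v$ a nonzero vector supported on the single coset $[U]$ (if the support meets the orbit of $[U]$) — this forces $V$ to contain a nonzero vector of the form $a[U]$ with $a\in K^\times$, hence (scaling) $[U]\in V$, hence $V=K[G/U]$ by irreducibility together with $G$-semilinearity. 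But $K[G/U]$ is not irreducible once $[G:U]=\infty$: its proper nonzero submodule $K[G/U]^{\circ}$ (finite sums with coefficient-sum zero, in the sense of $P[S]^{\circ}$) is a $K\langle G\rangle$-submodule since $G$ permutes the basis, and it is proper because the augmentation $\sum a_i[\sigma_i U]\mapsto\sum a_i$ is a nonzero $K$-semilinear map $K[G/U]\to K$. This contradiction handles the generic case; the remaining case, where the support of $v$ avoids the $G$-orbit of $[U]$, is reduced to it by translating $v$ by a $G$-element carrying one of its support cosets to $[U]$, then intersecting with the open stabilizer.

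The main obstacle is the averaging step: I need to know that the $U$-orbit of each coset in the support of $v$ is \emph{finite} in order to average, and this is exactly where the openness of the stabilizer of $v$ enters — the stabilizer is open, hence contains $U'\cap\bigcap_i \sigma_i U\sigma_i^{-1}$ for an open $U'\le U$, so each $\sigma_i U$ lies in a finite $U$-orbit. The subtler issue is ensuring the averaged vector is nonzero; here I would use the hypothesis on $U$ to separate the coset $[U]$ from the others: if $\sigma U\ne U$ then $\sigma\notin U$, so some $a\in K^U$ is moved by $\sigma$, and applying $\sigma^{-1}\cdot$ (multiplication by $a$ via semilinearity) detects the difference — this is precisely the Dedekind-independence mechanism already used in Lemma~\ref{inject}, and I would invoke that lemma directly: the coefficients of distinct cosets in a $U$-fixed vector are $K^U$-linearly independent over $K$ only if they are not all scalar multiples, so a $U$-fixed vector of minimal support is supported on a single coset.

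For the second assertion, suppose $G$ acts faithfully on $K$ and $[G:U]=n<\infty$. Then $U$ is open (it has finite index and the action is smooth by hypothesis on the semilinear representation — or directly, $G$ is a permutation group so finite-index subgroups containing an open subgroup are open; here faithfulness plus Lemma~\ref{inject} applied to $K\supseteq K^U$ gives $[K:K^U]\le n$, and in fact I want $[K:K^U]=n$). I would show $K[G/U]$ is trivial by verifying that the $K^U$-span of the obvious $G$-invariant vector $e:=\sum_{\sigma U\in G/U}[\sigma U]$ already tensors up to all of $K[G/U]$: by Hilbert 90 for the finite group $G/U'$ where $U'$ is the normal core — more cleanly, apply Theorem~\ref{Satz90} with the precompact group $G$ replaced by... — actually the cleanest route is to observe $K[G/U]\cong\operatorname{Ind}_U^G K$ and, since $U$ is open of finite index and $K|K^U$ is Galois-like (faithfulness gives $K^U$-algebra isomorphism $K\otimes_{K^U}K[G/U]\cong\prod$), deduce $(K[G/U])^G\otimes_{K^G}K\xrightarrow{\sim}K[G/U]$ from classical Speiser/Hilbert~90 for $\operatorname{Gal}$-type finite actions, exactly as cited after Theorem~\ref{Satz90} for finite $G$. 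The potential obstacle is justifying that the $U$-action data makes $K|K^U$ behave like a Galois extension of degree $[G:U]$; this follows from faithfulness via Lemma~\ref{inject} (giving $[K:K^U]\ge$ the relevant count) combined with the Artin-type bound $[K:K^U]\le[G:U]$.
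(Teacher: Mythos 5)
Your step of ``isolating the weight-$[U]$ component'' is where the argument breaks, and the break is structural, not a detail to be filled in. First, the $U$-orbit of $[U]$ in $G/U$ is the singleton $\{[U]\}$, so averaging over it returns $v$ unchanged; a $U$-fixed vector in $K[G/U]$ need not be supported on $[U]$ alone. Second, here is a concrete counterexample to your claim: take $G=\mathbb Z/2=\{1,\sigma\}$, $U=1$, $K$ a quadratic extension of $K^G$ with $\sigma$ the nontrivial automorphism (so the hypothesis on $U$ is just faithfulness). Then $K\cdot([1]+[\sigma])$ is a $K\langle G\rangle$-submodule of $K[G/U]$ isomorphic to $K$, hence irreducible, but it contains no nonzero vector supported on a single coset. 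Third, and fatally, your step uses nothing about $[G:U]$ being infinite; since $K[G/U]^{\circ}$ is a proper nonzero submodule whenever $U\neq G$ (the augmentation is a nonzero semilinear map to $K$), your argument would equally conclude that $K[G/U]$ has no irreducible subrepresentations whenever $U\neq G$, directly contradicting the second assertion of the lemma, which exhibits $K[G/U]\cong K^{[G:U]}$ when $[G:U]<\infty$. Note also that the lemma carries no smoothness or topology hypothesis: $G$ is merely a group acting on a field, so ``open stabilizer'' and ``finite $U$-orbit of each coset in the support'' are not available.

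The Dedekind-independence mechanism you gesture at (via Lemma~\ref{inject}) is indeed the right tool, but it must be aimed differently: it does not deliver a vector of $V$ supported on a single coset, it delivers a \emph{morphism out of} $V$. Precisely, the hypothesis that $g$ acts trivially on $K^U$ iff $g\in U$ says that the multiplicative characters $(K^U)^{\times}\to K^{\times}$, $f\mapsto f^g$, indexed by $g\in G/U$, are pairwise distinct, hence $K$-linearly independent by Artin. Thus for any nonzero $\alpha\in V\subseteq K[G/U]$ there is $Q\in(K^U)^{\times}$ such that the (well-defined, $K$-semilinear, $G$-equivariant) map $K[G/U]\to K$, $\sum b_g[g]\mapsto\sum b_gQ^{g}$, is nonzero on $\alpha$. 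Restricted to $V$ this is a nonzero morphism $V\to K$; if $V$ is irreducible it is an isomorphism, so $V^G\cong K^G\neq 0$, hence $K[G/U]^G\neq 0$, which is impossible when $[G:U]=\infty$ since $G/U$ then has no finite $G$-orbits. So the correct conclusion is $V\cong K$, not $V=K[G/U]$. Your treatment of the finite-index case (pass to the normal core $U'=\bigcap_g gUg^{-1}$ and invoke Speiser/Hilbert~90 for the finite group $G/U'$) matches the paper and is essentially fine.
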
 

{\sc Example.} Let $G$ be a group acting on a field $K$; $U\subset G$ 
be a maximal proper subgroup. Assume that $K^U\neq K^G$. Then we are 
under assumptions of Lemma \ref{no-simple-submod}. 

More particularly, if $G=\Sy_{\Psi}$, $U=\Sy_{\Psi,I}$ (so 
$G/U\cong\binom{\Psi}{\#I}$) and $K^{\Sy_{\Psi,I}}\neq k:=
K^{\Sy_{\Psi}}$ for a finite subset $I\subset\Psi$ then there are no 
irreducible $K$-semilinear subrepresentations in $K[\binom{\Psi}{\#I}]$. 

\begin{proof} By Artin's independence of characters theorem (applied 
to the one-dimensional characters $g:(K^U)^{\times}\to K^{\times}$), 
the morphism $K[G/U]\to\prod_{(K^U)^{\times}}K$, given by 
$\sum_gb_g[g]\mapsto(\sum_gb_gf^g)_{f\in(K^U)^{\times}}$, is injective. 
Then, for any non-zero element $\alpha\in K[G/U]$, there exists 
an element $Q\in K^U$ such that the morphism $K[G/U]\to K$, given by 
$\sum_gb_g[g]\mapsto\sum_gb_gQ^g$, does not vanish on $\alpha$. 
Then $\alpha$ generates a subrepresentation $V$ surjecting onto $K$. 
If $V$ is irreducible then it is isomorphic to $K$, so $V^G\neq 0$. 
In particular, $K[G/U]^G\neq 0$, which can happen only if index of 
$U$ in $G$ is finite. 

If $[G:U]<\infty$ set $U'=\cap_{g\in G/U}gUg^{-1}$. This is a normal 
subgroup of finite index. Then $K[G/U']=K\otimes_{K^{U'}}K^{U'}[G/U']$ 
and $K^{U'}[G/U']\cong(K^{U'})^{[G:U']}$ is trivial by 
Hilbert's theorem 90, so we get $K[G/U']\cong K^{[G:U']}$. \end{proof} 

\begin{lemma} \label{restriction} Let $G$ be a permutation 
group, $A$ be an associative ring endowed with a smooth 
$G$-action and $U\subseteq G$ be an open subgroup. Then 
any smooth $A\langle G\rangle$-module is also smooth when 
considered as an $A\langle U\rangle$-module. Suppose that 
the set $U\backslash G/U'$ is finite for any open subgroup 
$U'\subseteq G$. Then the restriction of any smooth finitely generated 
$A\langle G\rangle$-module to $A\langle U\rangle$ 
is a finitely generated $A\langle U\rangle$-module. \end{lemma}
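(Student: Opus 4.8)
The plan is to prove Lemma~\ref{restriction} in two halves, corresponding to its two assertions.

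For the first assertion (smoothness is preserved under restriction), the argument is essentially a tautology. First I would recall that a smooth $A\langle G\rangle$-module $V$ is, by definition, a module over the topological ring $A\langle G\rangle$ (with the base of open left ideals $I_W$ indexed by open subgroups $W$ of $G$, as described in the excerpt) such that every element of $V$ is annihilated by some $I_W$. Restricting the module structure along the inclusion $A\langle U\rangle\hookrightarrow A\langle G\rangle$, an element $v\in V$ annihilated by $I_W$ is then annihilated by $I_W\cap A\langle U\rangle$, which contains $I_{W\cap U}$ for $A\langle U\rangle$; since $W\cap U$ is an open subgroup of $U$, the module is smooth over $A\langle U\rangle$. (Here I use that $A$ carries a smooth $G$-action, so its restriction is a smooth $U$-action, making $A\langle U\rangle$ a legitimate topological ring of the stated type.)

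For the second assertion the plan is to reduce to the generators $A[G/U']$. A smooth finitely generated $A\langle G\rangle$-module is a quotient of a finite direct sum $\bigoplus_{i}A[G/U_i']$ with $U_i'$ open; since quotients of finitely generated modules are finitely generated, it suffices to show that each $A[G/U_i']$ is finitely generated as an $A\langle U\rangle$-module. Now decompose $G/U'$ into $U$-orbits: $G/U'=\coprod_{O\in U\backslash G/U'}O$, giving $A[G/U']=\bigoplus_{O}A[O]$ as $A\langle U\rangle$-modules. By hypothesis the index set $U\backslash G/U'$ is finite, so this is a finite direct sum, and each $A[O]\cong A[U/(U\cap gU'g^{-1})]$ for a representative $g$ of $O$ is a cyclic $A\langle U\rangle$-module (generated by $[g]$, whose stabilizer in $U$ is the open subgroup $U\cap gU'g^{-1}$). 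Hence $A[G/U']$ is finitely generated over $A\langle U\rangle$, completing the proof.

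I do not expect a genuine obstacle here: this is the semilinear analogue of the implication (\ref{ii})$\Leftrightarrow$(\ref{vi}) of Lemma~\ref{equiv-pseudoprofinite}, and the only point requiring a moment's care is the bookkeeping that each $U$-orbit $O$ in $G/U'$ really is a cyclic (not merely finitely generated) $A\langle U\rangle$-module with an \emph{open} point-stabilizer, so that $A[O]$ is itself smooth and the direct sum decomposition is compatible with the $A\langle U\rangle$-module structure. The finiteness hypothesis $\#(U\backslash G/U')<\infty$ is exactly what turns the (a priori infinite) direct sum into a finite one; note that this hypothesis is weaker than requiring $G$ to be Roelcke precompact, since only double cosets with the \emph{fixed} subgroup $U$ on the left are constrained.
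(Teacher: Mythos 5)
Your proof is correct and matches the paper's argument: reduce to the generators $A[G/U']$, decompose $G/U'$ into $U$-orbits, observe each orbit gives a cyclic $A\langle U\rangle$-module with open stabilizer, and use the finiteness of $U\backslash G/U'$ to get a finite direct sum. Your treatment of the first (smoothness) assertion, which the paper leaves implicit, is also the expected one.
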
 
\begin{proof} The $A\langle G\rangle$-modules $A[G/U']$ for all open 
subgroups $U'$ of $G$ form a generating family of the category of 
smooth $A\langle G\rangle$-modules. It suffices, thus, to check 
that $A[G/U']$ is a finitely generated $A\langle U\rangle$-module for all open 
subgroups $U'$ of $G$. Choose representatives $\alpha_i\in G/U'$ of 
the elements of $U\backslash G/U'$. Then $G/U'=\coprod_iU\alpha_i$, 
so $A[G/U']\cong\bigoplus_iA[U/(U\cap\alpha_iU'\alpha_i^{-1})]$ 
is a finitely generated $A\langle U\rangle$-module. \end{proof} 

{\sc Examples.} 1. The finiteness assumption of Lemma \ref{restriction} 
is valid for any open subgroup $G$ of a group of $\Sy$-type, 
as well as for any compact group $G$. 

2. The restriction functor splits the indecomposable generators 
into finite direct sums of indecomposable generators via canonical 
isomorphisms of $A\langle G_J\rangle$-modules $A[\binom{\Psi}{t}_q]
=\bigoplus_{\Lambda\subseteq J}M_{\Lambda}$, where $M_{\Lambda}$ is 
the free $A$-module on 
the set of all subobjects of $\Psi$ of length $t$ and meeting $J$ along $\Lambda$. 

\begin{lemma} \label{level-of-quotient} Let $s\ge 0$ be an integer 
and $M$ be a quotient of the $K\langle\Sy_{\Psi}\rangle$-module 
$K[\binom{\Psi}{s}]$ by a non-zero submodule $M_0$. 
Then there is a finite subset $I\subset\Psi$ such that the 
$K\langle\Sy_{\Psi|I}\rangle$-module $M$ is isomorphic to a 
quotient of $\bigoplus_{j=0}^{s-1}K[\binom{\Psi\smallsetminus I}{j}]
^{\oplus\binom{\# I}{s-j}}$. \end{lemma}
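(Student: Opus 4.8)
The plan is to restrict the given quotient $M$ to an open subgroup $\Sy_{\Psi|I}$ for a suitably chosen finite $I\subset\Psi$ and to exploit the canonical decomposition of $K[\binom{\Psi}{s}]$ into $\Sy_{\Psi|I}$-submodules recorded in the Example following Lemma~\ref{restriction}: for any finite $I\subset\Psi$ one has $K[\binom{\Psi}{s}]=\bigoplus_{\Lambda\subseteq I}M_{\Lambda}$ as $K\langle\Sy_{\Psi|I}\rangle$-modules, where $M_{\Lambda}$ is the $K$-span of the $[T]$ with $T\cap I=\Lambda$, and there are semilinear isomorphisms $M_{\Lambda}\cong K[\binom{\Psi\smallsetminus I}{s-\#\Lambda}]$, $[\Lambda\sqcup T']\mapsto[T']$. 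Grouping the $\binom{\#I}{s-j}$ summands with $\#\Lambda=s-j$, the $\Sy_{\Psi|I}$-submodule $\bigoplus_{\varnothing\neq\Lambda\subseteq I}M_{\Lambda}$ is exactly $\bigoplus_{j=0}^{s-1}K[\binom{\Psi\smallsetminus I}{j}]^{\oplus\binom{\#I}{s-j}}$ (once $\#I\geq s$). So it suffices to find a finite $I$ for which the composite $\bigoplus_{\varnothing\neq\Lambda}M_{\Lambda}\hookrightarrow K[\binom{\Psi}{s}]\twoheadrightarrow M$ is surjective; since $M_{0}$ is a $\Sy_{\Psi}$-submodule and $\bigoplus_{\varnothing\neq\Lambda}M_{\Lambda}$ is a $\Sy_{\Psi|I}$-submodule, this is equivalent to $M_{0}+\bigoplus_{\varnothing\neq\Lambda}M_{\Lambda}=K[\binom{\Psi}{s}]$, and then the second isomorphism theorem presents $M$ as a quotient of $\bigoplus_{\varnothing\neq\Lambda}M_{\Lambda}$. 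Finally, denoting by $\pi_{\varnothing}\colon K[\binom{\Psi}{s}]\to M_{\varnothing}=K[\binom{\Psi\smallsetminus I}{s}]$ the $K$-linear projection killing every $M_{\Lambda}$ with $\Lambda\neq\varnothing$, the desired equality amounts to $\pi_{\varnothing}(M_{0})=M_{\varnothing}$.

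To achieve this, I would first fix a nonzero $\alpha=\sum_{t=1}^{n}c_{t}[T_{t}]\in M_{0}$ with the $T_{t}$ distinct $s$-element subsets and $c_{t}\in K^{\times}$, and choose any finite $I\subset\Psi$ with $\#I$ large enough, say $\#I\geq s+n$ (it will not matter whether $I$ contains the $T_{t}$). It then remains to show that every basis vector $[T^{*}]$ with $T^{*}\in\binom{\Psi\smallsetminus I}{s}$ lies in $\pi_{\varnothing}(M_{0})$; since these $[T^{*}]$ span $M_{\varnothing}$ over $K$, that gives $\pi_{\varnothing}(M_{0})=M_{\varnothing}$. Given $T^{*}$, pick $y_{t}\in T_{t}\smallsetminus T_{1}$ for $t=2,\dots,n$ (possible as the $T_{t}$ are distinct of equal size) and build $g\in\Sy_{\Psi}$ which restricts to some bijection $T_{1}\to T^{*}$ and satisfies $g(y_{t})\in I$ for all $t\geq 2$. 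Then $g\alpha=c_{1}^{g}[T^{*}]+\sum_{t\geq 2}c_{t}^{g}[gT_{t}]$ with $g(y_{t})\in gT_{t}\cap I$ for $t\geq 2$, so $\pi_{\varnothing}(g\alpha)=c_{1}^{g}[T^{*}]$ with $c_{1}^{g}\neq 0$; as $M_{0}$ is stable under multiplication by $K$, the element $(c_{1}^{g})^{-1}g\alpha$ lies in $M_{0}$ and has $\pi_{\varnothing}$-image $[T^{*}]$, as wanted.

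The one step that needs care is the construction of such a $g$. One must push the fixed set $T_{1}$ onto an arbitrary prescribed $s$-set $T^{*}$ disjoint from $I$ while simultaneously forcing each of the other supporting sets $T_{t}$ to meet $I$. The point is that the ``difference points'' $y_{t}\in T_{t}\smallsetminus T_{1}$ lie outside $T_{1}$, so their images are not constrained by the requirement $g(T_{1})=T^{*}$: one may send the (at most $n-1$) distinct $y_{t}$ to distinct points of $I$, which is legitimate since $I$ is disjoint from $T^{*}$ and $\#I\geq n$, and then extend the resulting finite partial injection to a permutation of the infinite set $\Psi$ in the usual way. Everything else is bookkeeping with the direct-sum decomposition, the $K$-linearity of $\pi_{\varnothing}$, and the second isomorphism theorem; note that the hypothesis $M_{0}\neq 0$ enters only through the single element $\alpha$, and that faithfulness of the $\Sy_{\Psi}$-action on $K$ is not used.
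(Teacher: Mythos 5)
Your proof is correct, and it takes a genuinely different route from the paper's. Both arguments reduce the claim to the surjectivity of $\pi_{\varnothing}|_{M_0}$ onto $M_{\varnothing}=K[\binom{\Psi\smallsetminus I}{s}]$, but they differ in how $I$ is chosen and which group does the work. The paper picks $\alpha=\sum_{S\subseteq J}a_{S}[S]\in M_0$ with $J$ the union of supports, fixes one $s$-set $S$ with $a_S\neq 0$, and sets $I:=J\smallsetminus S$; with that choice $S$ is the unique support of $\alpha$ disjoint from $I$, so $\pi_{\varnothing}(\alpha)=a_{S}[S]$ is a single term for free, and a translate by $\sigma\in\Sy_{\Psi|I}$ carrying $S$ to a prescribed $T^{*}\subseteq\Psi\smallsetminus I$ immediately yields $\pi_{\varnothing}(\sigma\alpha)=a_{S}^{\sigma}[T^{*}]$. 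You instead let $I$ be any sufficiently large finite set not tied to the supports and then use elements $g\in\Sy_{\Psi}$ (not just $\Sy_{\Psi|I}$) to simultaneously push $T_1$ onto $T^{*}$ and push witnesses $y_t\in T_t\smallsetminus T_1$ into $I$; this is legitimate because $M_0$ is $\Sy_{\Psi}$-stable and $\pi_{\varnothing}$ is $K$-linear, even though $g$ need not normalize the decomposition $\bigoplus_{\Lambda}M_{\Lambda}$. Your argument is slightly more flexible in the choice of $I$ (and the bound $\#I\geq s+n$ is safe, though $\#I\geq n-1$ already suffices for placing the distinct $y_t$'s), at the cost of the explicit permutation construction; the paper's choice of $I$ eliminates that construction by building the right combinatorics into $I$ from the start. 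Both give the same conclusion via the second isomorphism theorem.
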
 
\begin{proof} Let $\alpha=\sum_{S\subseteq J}a_S[S]\in M_0$ be 
a non-zero element for a finite set $J\subset\Psi$. Fix some 
$S\subseteq J$ with $a_S\neq 0$. Set $I:=J\smallsetminus S$. 
Then the morphism of $K\langle\Sy_{\Psi|I}\rangle$-modules 
$K\langle\Sy_{\Psi|I}\rangle\alpha\oplus
\bigoplus_{\varnothing\neq\Lambda\subseteq I}
K[\binom{\Psi\smallsetminus I}{s-\#\Lambda}]\to K[\binom{\Psi}{s}]$, 
given (i) by the inclusion on the first summand and (ii) 
by $[T]\mapsto[T\cup\Lambda]$ on the summand corresponding 
to $\Lambda$, is surjective. \end{proof} 

In the following result, our principal examples of the ring $A$ will be 
division rings endowed with an $\Sy_{\Psi}$-action, though localization of 
$\mathbb Z[x~|~x\in\Psi]$ at all non-constant indecomposable polynomials 
gives one more example. 
\begin{proposition} \label{noetherian-generators} Let $A$ be an associative 
left noetherian ring endowed with an arbitrary $\Sy_{\Psi}$-action. 
Then the left $A\langle U\rangle$-module $A[\Psi^s]$ is noetherian for 
any integer $s\ge 0$ and any open subgroup $U\subseteq\Sy_{\Psi}$. 
If the $\Sy_{\Psi}$-action on $A$ is smooth then any smooth finitely generated 
$A\langle\Sy_{\Psi}\rangle$-module is noetherian. \end{proposition}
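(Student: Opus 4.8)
The plan is to reduce the second assertion to the first and to prove the first by strong induction on $s$, the combinatorial input being a version over an arbitrary left noetherian ring of Lemma~\ref{level-of-quotient}.

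\emph{Reduction of the second assertion to the first.} If the $\Sy_{\Psi}$-action on $A$ is smooth, a smooth finitely generated $A\langle\Sy_{\Psi}\rangle$-module $M$ is a quotient of a finite direct sum $\bigoplus_i A[\Sy_{\Psi}/U_i]$ with $U_i\subseteq\Sy_{\Psi}$ open (send the coordinate $[\sigma U_i]$ to $\sigma v_i$ for a finite generating set $v_1,\dots,v_n$, with $U_i$ fixing $v_i$). Since the pointwise stabilizers of finite subsets form a base of the topology, $U_i\supseteq\Sy_{\Psi|T_i}$ for some finite $T_i\subset\Psi$, so $A[\Sy_{\Psi}/U_i]$ is a quotient of $A[\Sy_{\Psi}/\Sy_{\Psi|T_i}]$, which is the span of the single $\Sy_{\Psi}$-orbit of injective $\#T_i$-tuples inside $A[\Psi^{\#T_i}]$, hence a direct summand of $A[\Psi^{\#T_i}]$. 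Granting the first assertion with $U=\Sy_{\Psi}$ (the whole group is open in itself), $A[\Psi^{\#T_i}]$ is noetherian; a direct summand, a quotient, and a finite direct sum of noetherian modules are again noetherian, so $M$ is noetherian.

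\emph{First assertion: setting up the induction.} Given an open $U\subseteq\Sy_{\Psi}$, pick a finite $I\subset\Psi$ with $\Sy_{\Psi|I}\subseteq U$. Every $A\langle U\rangle$-submodule of $A[\Psi^s]$ is a fortiori an $A\langle\Sy_{\Psi|I}\rangle$-submodule, so it is enough to prove $A[\Psi^s]$ noetherian over $A\langle\Sy_{\Psi|I}\rangle$. Writing $\Omega:=\Psi\smallsetminus I$ and identifying $\Sy_{\Psi|I}=\Sy_{\Omega}$, sorting the coordinates of a tuple by membership in $I$ or in $\Omega$ gives an isomorphism of $A\langle\Sy_{\Omega}\rangle$-modules
\[A[\Psi^s]\;\cong\;A[\Omega^s]\ \oplus\ \bigoplus_{r=0}^{s-1}A[\Omega^r]^{\oplus\binom{s}{r}(\#I)^{s-r}}.\]
We argue by strong induction on $s$. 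For $s=0$ the module is $A$ itself, and the $A\langle\Sy_{\Psi}\rangle$-submodules are the $\Sy_{\Psi}$-stable left ideals, which satisfy the ascending chain condition since $A$ is left noetherian. For $s\ge 1$, the summands $A[\Omega^r]$ with $r<s$ are noetherian over $A\langle\Sy_{\Omega}\rangle$ by the inductive hypothesis (applied to the infinite set $\Omega$), so the first assertion reduces to: for every infinite set $\Omega$, $A[\Omega^s]$ is a noetherian $A\langle\Sy_{\Omega}\rangle$-module. Decomposing $A[\Omega^s]$ into $\Sy_{\Omega}$-orbit summands, those indexed by tuples with a repeated coordinate are direct summands of some $A[\Omega^r]$ with $r<s$, hence noetherian; so it remains to show that $A[\Sy_{\Omega}/\Sy_{\Omega|T}]$ is noetherian over $A\langle\Sy_{\Omega}\rangle$ for $\#T=s$.

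\emph{The core.} Let $0\subsetneqq N_1\subseteq N_2\subseteq\cdots$ be an ascending chain of submodules of $A[\Sy_{\Omega}/\Sy_{\Omega|T}]$. By (the version over an arbitrary left noetherian $A$ of) Lemma~\ref{level-of-quotient}: choosing a nonzero $\alpha\in N_1$, writing out its finite support and deleting from it the points not occurring in one monomial of $\alpha$, one obtains a finite $I'\subset\Omega$ and an explicit surjection onto $A[\Sy_{\Omega}/\Sy_{\Omega|T}]$ from $A\langle\Sy_{\Omega|I'}\rangle\alpha$ together with finitely many permutation modules $A[\Sy_{\Omega\smallsetminus I'}/\Sy_{(\Omega\smallsetminus I')|T_j}]$ with $\#T_j<s$. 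Passing to $Q:=A[\Sy_{\Omega}/\Sy_{\Omega|T}]/N_1$ annihilates the first summand, so $Q$ is, as an $A\langle\Sy_{\Omega\smallsetminus I'}\rangle$-module, a quotient of a finite direct sum of those lower-level modules; each of them is a direct summand of $A[(\Omega\smallsetminus I')^{\#T_j}]$, which is noetherian over $A\langle\Sy_{\Omega\smallsetminus I'}\rangle$ by the inductive hypothesis (applied to the infinite set $\Omega\smallsetminus I'$). Hence $Q$ is noetherian over $A\langle\Sy_{\Omega\smallsetminus I'}\rangle$, so the chain $N_2/N_1\subseteq N_3/N_1\subseteq\cdots$ of its $A\langle\Sy_{\Omega}\rangle$-submodules — a fortiori of its $A\langle\Sy_{\Omega\smallsetminus I'}\rangle$-submodules — stabilizes, and therefore so does the original chain.

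\emph{Expected main obstacle.} The real work is the promised generalization of Lemma~\ref{level-of-quotient} from a base field to an arbitrary left noetherian ring $A$, and from $\binom{\Psi}{s}$ to the permutation module $A[\Sy_{\Psi}/\Sy_{\Psi|T}]$: one must exhibit the level-lowering map as a morphism of $A\langle\Sy_{\Psi|I'}\rangle$-modules and verify its surjectivity without using that the coefficients form a field, keeping track of which $r<s$ actually occur. Since the proof of Lemma~\ref{level-of-quotient} is purely module-theoretic — it constructs an explicit surjection from lower-level generators and never divides — I expect it to transcribe essentially verbatim; once it is in place, the remainder is the bookkeeping of the induction together with the stability of the noetherian property under passing to subrings acting on a fixed module, to direct summands, to quotients, and to finite direct sums.
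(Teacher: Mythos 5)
Your reduction of the second assertion to the first is sound, as is the outer induction on $s$ and the sorting decomposition of $A[\Psi^s]$ over $A\langle\Sy_{\Omega}\rangle$. The genuine gap is exactly where you flag the ``expected main obstacle'': the generalization of Lemma~\ref{level-of-quotient} to an arbitrary left noetherian ring $A$ is false as stated, and your belief that its proof ``never divides'' is mistaken. Unwinding the surjectivity claim there: for a basis element $[S']$ with $S'\cap I=\varnothing$, one chooses $\sigma\in\Sy_{\Psi|I}$ with $\sigma(S_0)=S'$ (where $S_0$ is the chosen $s$-subset of $J$ with $a_{S_0}\neq 0$ and $I=J\smallsetminus S_0$) and computes $\sigma\alpha=a_{S_0}[S']+(\text{terms whose support meets }I)$; the latter terms are covered by the lower-level summands, so one obtains only that $a_{S_0}[S']$ is in the image. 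The step from ``$a_{S_0}[S']$ is hit'' to ``$[S']$ is hit'' divides by $a_{S_0}$, which is legitimate over the field $K$ but not over a general $A$. If $a_{S_0}$ is a nonunit, the cokernel of your level-lowering map is a nonzero module supported on the tuples disjoint from $I'$, and there is no reason for it to be controlled by the inductive hypothesis; so $Q$ need not be a quotient of lower-level modules and the chain does not stabilize for free.

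The paper's proof avoids this by spending the noetherianity of $A$ at a different point: it projects the submodule $M\subseteq A[\Psi^s]$ onto the finite free $A$-module $A[I_0^s]$ for a fixed $s$-element set $I_0$, uses that $A[I_0^s]$ is a noetherian $A$-module to extract finitely many elements $\alpha_1,\dots,\alpha_N\in M$ whose projections generate $\pi_0(M)$, and then shows that $M/\sum_jA\langle U\rangle\alpha_j$ is a subquotient of the lower-level module $\sum_{\gamma}A[\Psi^s_{\gamma}]$, hence noetherian by induction. The hypothesis on $A$ thus produces finitely many generators of a projection rather than inverting a single coefficient, which is exactly what your route needs but cannot supply. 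Replacing your single-$\alpha$ core step with this projector argument would close the gap while keeping the rest of your structure (reduction to $\Sy_{\Omega}$, strong induction on $s$, orbit decomposition) intact.
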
 
\begin{proof} We need to show that any $A\langle U\rangle$-submodule 
$M\subset A[\Psi^s]$ is finitely generated for all $U=\Sy_{\Psi|S}$ with finite 
$S\subset\Psi$. We proceed by induction on $s\ge 0$, the case $s=0$ 
being trivial. Assume that $s>0$ and the $A\langle U\rangle$-modules 
$A[\Psi^j]$ are noetherian for all $j<s$. Fix a subset 
$I_0\subset\Psi\smallsetminus S$ of order $s$. 

Let $M_0$ be the image of $M$ under the $A$-linear projector 
$\pi_0:A[\Psi^s]\to A[I_0^s]\subset A[\Psi^s]$ omitting all 
$s$-tuples containing elements other than those of $I_0$. As $A$ 
is noetherian and $I_0^s$ is finite, the $A$-module $M_0$ is 
finitely generated. Let the $A$-module $M_0$ be generated by the images of 
some elements $\alpha_1,\dots,\alpha_N\in M\subseteq A[\Psi^s]$. 
Then $\alpha_1,\dots,\alpha_N$ belong to the $A$-submodule 
$A[I^s]$ of $A[\Psi^s]$ for some finite subset $I\subset\Psi$. 

Let $J\subset I\cup S$ be the complement to $I_0$. For 
each pair $\gamma=(j,x)$, where $1\le j\le s$ and $x\in J$, set 
$\Psi^s_{\gamma}:=\{(x_1,\dots,x_s)\in\Psi^s~|~x_j=x\}$. This is 
a smooth $\Sy_{\Psi|J}$-set. Then the set $\Psi^s$ is the union 
of the $\Sy_{\Psi|J}$-orbit consisting of $s$-tuples of pairwise 
distinct elements of $\Psi\smallsetminus J$ and of a finite union 
of $\Sy_{\Psi|J}$-orbits embeddable into $\Psi^{s-1}$: 
$\bigcup_{\gamma}\Psi^s_{\gamma}\cup\bigcup_{1\le i<j\le s}\Delta_{ij}$, 
where $\Delta_{ij}:=\{(x_1,\dots,x_s)\in\Psi^s~|~x_i=x_j\}$ are diagonals. 

As (i) $M_0\subseteq\sum_{j=1}^NA\alpha_j+
\sum_{\gamma\in\{1,\dots,s\}\times J}A[\Psi^s_{\gamma}]$, (ii) 
$g(M_0)\subset A[\Psi^s]$ is determined by $g(I_0)$, (iii) for any 
$g\in U$ such that $g(I_0)\cap J=\varnothing$ there exists 
$g'\in U_J$ with $g(I_0)=g'(I_0)$ ($U_J$ acts transitively 
on the $s$-configurations in $\Psi\smallsetminus J$), 
one has inclusions of $A\langle U_J\rangle$-modules 
\[\sum_{j=1}^NA\langle U\rangle\alpha_j\subseteq M\subseteq\sum_{g\in U}
g(M_0)\subseteq\sum_{g\in U_J}g(M_0)
+\sum_{\gamma\in\{1,\dots,s\}\times J}A[\Psi^s_{\gamma}].\] 
On the other hand, $g(M_0)\subseteq g(\sum_{j=1}^NA\alpha_j)+
\sum_{\gamma\in\{1,\dots,s\}\times J}A[\Psi^s_{\gamma}]$ 
for $g\in U_J$, 
and therefore, the $A\langle U_J\rangle$-module 
$M/\sum_{j=1}^NA\langle U\rangle\alpha_j$ becomes a subquotient of the 
noetherian, by the induction assumption, $A\langle U_J\rangle$-module 
$\sum_{\gamma\in\{1,\dots,s\}\times J}A[\Psi^s_{\gamma}]$, so 
the $A\langle U_J\rangle$-module $M/\sum_{j=1}^NA\langle U\rangle\alpha_j$ 
is finitely generated, and thus, $M$ is finitely generated as well. \end{proof} 

\begin{corollary} \label{noetherian} Let $A$ be a left noetherian 
associative ring endowed with a smooth $\Sy_{\Psi}$-action. Then 
{\rm (i)} any smooth finitely generated left $A\langle\Sy_{\Psi}\rangle$-module 
$W$ is noetherian if considered as a left 
$A\langle U\rangle$-module for any open subgroup $U\subseteq\Sy_{\Psi}$; 
{\rm (ii)} the category of smooth $A\langle\Sy_{\Psi}\rangle$-modules 
is locally noetherian, i.e., any smooth finitely generated left 
$A\langle\Sy_{\Psi}\rangle$-module is noetherian. \end{corollary}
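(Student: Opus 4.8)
Corollary \ref{noetherian} follows essentially by bootstrapping from Proposition \ref{noetherian-generators}.

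This is essentially a repackaging of Proposition \ref{noetherian-generators}: the real work -- the induction on $s$ -- is already done there, and the plan is only to reduce an arbitrary smooth finitely generated module to the permutation modules $A[\Psi^s]$.

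For part (i), I would argue as follows. Let $W$ be a smooth finitely generated $A\langle\Sy_{\Psi}\rangle$-module. Then $W$ is a quotient, as an $A\langle\Sy_{\Psi}\rangle$-module, of a finite direct sum $\bigoplus_{i=1}^n A[\Sy_{\Psi}/U_i]$ with the $U_i$ open. Each $U_i$ contains $\Sy_{\Psi|S_i}$ for a finite $S_i\subset\Psi$, so $A[\Sy_{\Psi}/U_i]$ is a quotient of $A[\Sy_{\Psi}/\Sy_{\Psi|S_i}]$. Writing $s_i:=\#S_i$ and identifying $S_i$ with $\{1,\dots,s_i\}$, the $\Sy_{\Psi}$-set $\Sy_{\Psi}/\Sy_{\Psi|S_i}$ is the set of injections $\{1,\dots,s_i\}\hookrightarrow\Psi$, i.e. the sub-$\Sy_{\Psi}$-set of $\Psi^{s_i}$ of tuples with pairwise distinct entries. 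Since the $\Sy_{\Psi}$-orbits of $\Psi^{s}$ are indexed by the partitions of $\{1,\dots,s\}$ recording which coordinates coincide, this locus is a union of orbits, so $A[\Sy_{\Psi}/\Sy_{\Psi|S_i}]$ is a direct summand of the $A\langle\Sy_{\Psi}\rangle$-module $A[\Psi^{s_i}]$, and remains a direct summand after restriction to $A\langle U\rangle$ for any open $U\subseteq\Sy_{\Psi}$.

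Now by Proposition \ref{noetherian-generators} the $A\langle U\rangle$-module $A[\Psi^{s_i}]$ is noetherian for every open $U$. Since noetherianness is inherited by direct summands, finite direct sums and quotients, the module $\bigoplus_{i=1}^n A[\Sy_{\Psi}/U_i]$, and hence its quotient $W$, is noetherian over $A\langle U\rangle$; it remains smooth by Lemma \ref{restriction}. This proves (i), and (ii) is the special case $U=\Sy_{\Psi}$ (equivalently, it is the second assertion of Proposition \ref{noetherian-generators}).

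I do not expect any genuine obstacle here: the only step that is not formal bookkeeping is the observation that the pairwise-distinct locus splits $A[\Psi^{s}]$ off as an $A\langle\Sy_{\Psi}\rangle$-direct summand, and that is immediate because it is a union of $\Sy_{\Psi}$-orbits. One should also double-check that finite generation of $W$ over $A\langle\Sy_{\Psi}\rangle$ is all that is needed -- it is, since the argument only uses that $W$ is a quotient of finitely many generators, not that it be finitely generated over the smaller ring $A\langle U\rangle$.
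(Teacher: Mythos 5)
Your proof is correct and is essentially the paper's argument, just spelled out in more detail: the paper condenses the reduction to "$W$ is a quotient of a finite direct sum of $A[\Psi^m]$" and then cites Proposition \ref{noetherian-generators}, whereas you make the intermediate step explicit — $A[\Sy_{\Psi}/U_i]$ is a quotient of $A[\Sy_{\Psi}/\Sy_{\Psi|S_i}]$, which is a direct summand of $A[\Psi^{s_i}]$ because the injective tuples form a single $\Sy_{\Psi}$-orbit. (Minor nit: that locus is one orbit, not merely a union of orbits, though either observation suffices.)
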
 
\begin{proof} The module $W$ is a quotient of a finite direct 
sum of $A[\Psi^m]$ for some integer $m\ge 0$, while $A[\Psi^m]$ 
are noetherian by Proposition \ref{noetherian-generators}. 
\end{proof} 

The next result generalizes a description of representations 
$k[\binom{\Psi}{s}]$ of $\Sy_{\Psi}$ from \cite{CaminaEvans}. 
\begin{lemma} \label{length-binom} In notation preceeding Corollary 
\ref{def-binom}, let $q\ge 1$ be either 1 or a primary number, $S$ 
be a finite or infinite set (if $q=1$) or an $\mathbb F_q$-vector 
space, $G$ be the automorphism group of $S$. Let the morphism of 
$G$-modules $\partial_s^S:{\mathbb Z}[\binom{S}{s}_q]\to
{\mathbb Z}[\binom{S}{s-1}_q]$ be defined by 
$[T]\mapsto\sum_{T'\subset T}[T']$. Then {\rm (i)} 
$\partial_{s+1}^S\otimes\mathbb Q$ is surjective if and only if length 
of $S$ is $>2s$, {\rm (ii)} the representation $\mathbb Q[\binom{S}{s}_q]$ 
of $G$ is of length $\min(s,\mathrm{length}(S)-s)+1$ and 
{\rm (iii)} the irreducible quotients of its composition series are 
absolutely irreducible and pairwise non-isomorphic. \end{lemma}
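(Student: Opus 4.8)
The plan is to settle the statement for a finite $S$ by the classical representation theory of the Johnson and Grassmann schemes, and then to transfer it to the infinite case with the finiteness tools of the present note (the corollary following Lemma~\ref{length-upper-bound}, together with Lemmas~\ref{growth} and~\ref{inject}).

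\emph{Finite $S$.} Write $n:=\mathrm{length}(S)$, $t:=\min(s,n-s)$. The $G$-orbits on $\binom{S}{s}_q\times\binom{S}{s}_q$ are classified by the length of the intersection of the two members, running through the $t+1$ values $\max(0,2s-n),\dots,s$; since this invariant is symmetric, $(G,\mathrm{Stab}_G([T_0]))$ is a Gelfand pair — classically for $(\Sy_n,\Sy_s\times\Sy_{n-s})$ when $q=1$ and for $(\GL_n(\mathbb F_q),P)$ with $P$ a maximal parabolic when $q>1$ — so $\End_{\mathbb Q[G]}(\mathbb Q[\binom{S}{s}_q])$ is commutative of dimension $t+1$; as $G$ is finite, $\mathbb Q[\binom{S}{s}_q]$ is a semisimple multiplicity-free sum of $t+1$ pairwise non-isomorphic simple modules. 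This is (ii), and the same count over $\overline{\mathbb Q}$ gives $\mathrm{length}_{\overline{\mathbb Q}[G]}\overline{\mathbb Q}[\binom{S}{s}_q]=t+1$. Because the eigenvalues of the Johnson (resp. Grassmann) scheme are rational, that endomorphism algebra is $\cong\mathbb Q^{t+1}$, so each constituent has endomorphism ring $\mathbb Q$ and is absolutely irreducible, giving (iii). Finally, put on $\mathbb Q[\binom{S}{s}_q]$ and $\mathbb Q[\binom{S}{s+1}_q]$ the $G$-invariant inner products for which the members of the Grassmannians are orthonormal; the transpose of $\partial_{s+1}^S$ is the ``up'' operator $[T]\mapsto\sum_{T\subset T'}[T']$, hence $\partial_{s+1}^S$ is onto iff this operator is injective, which by the normalized matching (Peck) property of the Boolean (resp. subspace) lattice happens precisely when $s<n/2$, i.e. $n>2s$. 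This is (i); see \cite{CaminaEvans} for $q=1$.

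\emph{Infinite $S$.} Now $G=\Sy_\Psi$ (with $\Psi=S$) or $G=\GL_{\mathbb F_q}(\Psi)$, so $\mathrm{length}(S)=\infty$ and $\min(s,\mathrm{length}(S)-s)=s$. Since $\binom{S}{s}_q=\bigcup_{S_0}\binom{S_0}{s}_q$ over finite subobjects $S_0$, each enlargeable to have $\mathrm{length}(S_0)>2s$, and $\partial_{s+1}^{S_0}$ is a restriction of $\partial_{s+1}^S$, the finite case yields $\binom{S}{s}_q\subseteq\partial_{s+1}^S(\mathbb Q[\binom{S}{s+1}_q])$, proving (i). For (ii), set $\phi_i:=\partial_{i+1}^S\circ\cdots\circ\partial_s^S\colon\mathbb Q[\binom{S}{s}_q]\to\mathbb Q[\binom{S}{i}_q]$ for $0\le i\le s$ (so $\phi_s=\mathrm{id}$) and $F_i:=\ker\phi_i$, $F_{-1}:=\mathbb Q[\binom{S}{s}_q]$; as $\phi_{i-1}=\partial_i^S\circ\phi_i$ this is a chain $F_{-1}\supseteq F_0\supseteq\cdots\supseteq F_s=0$ with $F_{i-1}/F_i\cong\phi_i(\mathbb Q[\binom{S}{s}_q])\cap\ker\partial_i^S$. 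Choosing a finite $R\subseteq S$ with $\mathrm{length}(R)>2s$, the maps $\partial_j^R$ ($j\le s$) are all onto, so $\phi_i^R$ is onto $\mathbb Q[\binom{R}{i}_q]$, and $\ker\partial_i^R\neq 0$ because $\#\binom{R}{i}_q>\#\binom{R}{i-1}_q$; hence $0\neq\ker\partial_i^R\subseteq\phi_i(\mathbb Q[\binom{S}{s}_q])\cap\ker\partial_i^S$, and all $s+1$ quotients $F_{i-1}/F_i$ are non-zero. On the other hand $\mathbb Q[\binom{S}{s}_q]^{G_R}=\mathbb Q[\binom{R}{s}_q]$ for $R$ a finite subobject, of length $\le s+1$ over $\mathbb Q[\Aut(R)]$ by the finite case, so the corollary after Lemma~\ref{length-upper-bound} bounds $\mathrm{length}_{\mathbb Q[G]}\mathbb Q[\binom{S}{s}_q]$ by $s+1$; therefore it equals $s+1=\min(s,\mathrm{length}(S)-s)+1$ and every $F_{i-1}/F_i$ is simple. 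For (iii): via $\phi_i$ each simple $F_{i-1}/F_i$ embeds into $\mathbb Q[\binom{S}{i}_q]$, hence into $\mathbb Q[G/G_{\Psi_i}]$, so by Lemma~\ref{growth} its function $N\mapsto\dim_{\mathbb Q}(F_{i-1}/F_i)^{G_{\Psi_N}}$ grows as a $q$-polynomial of degree exactly $i$; thus the $s+1$ composition factors are pairwise non-isomorphic. Running the whole argument over $\overline{\mathbb Q}$ gives $\mathrm{length}_{\overline{\mathbb Q}[G]}\overline{\mathbb Q}[\binom{S}{s}_q]=s+1=\mathrm{length}_{\mathbb Q[G]}\mathbb Q[\binom{S}{s}_q]$, and since $\overline{\mathbb Q}[\binom{S}{s}_q]=\mathbb Q[\binom{S}{s}_q]\otimes_{\mathbb Q}\overline{\mathbb Q}$, no composition factor acquires length under extension of scalars, i.e. they are all absolutely irreducible.

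\emph{Main difficulty.} The genuine content is the finite case, built on a package of classical facts — the Gelfand property of $(\Sy_n,\Sy_s\times\Sy_{n-s})$ and of $(\GL_n(\mathbb F_q),P)$, the rationality of the eigenvalues of the $q$-Johnson (Grassmann) scheme, and the normalized matching property of the Boolean and subspace lattices — which are textbook for $q=1$ but for $q>1$ live in the theory of the Grassmann scheme and of unipotent representations of $\GL_n(\mathbb F_q)$. Organizing these into the stated trichotomy, and checking that every finite-level boundary operator $\partial_j^R$ really is a restriction of $\partial_j^S$ so that the transfer to infinite $S$ is legitimate, is the step that needs the most care.
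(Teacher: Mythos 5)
Your proof is correct, and the broad strategy — finite case via the commutative endomorphism ring / Gelfand‐pair count, then a limiting argument for infinite $S$ bounded above by the corollary of Lemma~\ref{length-upper-bound} — is the same as the paper's. The details diverge in two places, however. For surjectivity of $\partial_{s+1}^S$ over a finite $S$ with $\mathrm{length}(S)>2s$, the paper gives a self-contained argument: each $\partial_j^S\otimes\mathbb Q$ ($j\le s$) is non-injective by a dimension count, the composition $\partial_1^S\cdots\partial_s^S$ is the non-zero augmentation, and since the semisimple module $\mathbb Q[\binom{S}{s}_q]$ has length $s+1$, each $\partial_j^S$ must kill exactly one composition factor and hence be surjective. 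You instead invoke the normalized matching (Peck) property of the Boolean/subspace lattice via the transposed ``up'' operator — an external but classical fact, for which the $q>1$ case (Kantor) is less standard than $q=1$. For distinguishing the $s+1$ composition factors in the infinite case, the paper cites injectivity and indecomposability of $\mathbb Q[\binom{S}{s}_q]$ (so the socle is simple, \cite[Prop.~6.9]{BucurDeleanu}); you instead place each $\ker\partial_i^S$ inside $\mathbb Q[G/G_{\Psi_i}]$ and apply Lemma~\ref{growth} to separate them by the degree of their $d$-function growth, and you also make the filtration $F_i=\ker(\partial_{i+1}^S\cdots\partial_s^S)$ explicit to get the length lower bound, where the paper is terser. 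Your route is somewhat more explicit and reuses the note's own growth machinery, whereas the paper's is shorter and avoids the external matching input; both are valid. (Incidentally, the paper's phrase ``direct sum of $s+1$ pairwise non-isomorphic absolutely irreducible'' in the finite case should read $\min(s,\mathrm{length}(S)-s)+1$; your $t+1$ is the correct count.)
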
 
\begin{proof} Fix a subobject $T$ of $S$ of length $s$. Then any element 
$\varphi\in\Hom_G({\mathbb Z}[\binom{S}{s}_q],{\mathbb Z}[\binom{S}{t}_q])$ 
is determined by the image of $T$. As $\varphi(T)$ is fixed by the 
stabilizer of $T$ and the latter acts transitively on the set of all 
subobjects of $S$ of length $t$ and with a given length of intersection 
with $T$, one has $\varphi([T])=
\sum_{i=0}^{\min(s,t)}a_i\sum_{\mathrm{length}(T'\cap T)=i}[T']$ 
for a collection of coefficients $a_i\in\mathbb Q$ if $S$ is finite 
and $\varphi([T])=a\sum_{T'\subseteq T}[T']$ for a coefficient 
$a\in\mathbb Q$ if $S$ is infinite. 

Assume first that $S$ is finite. Comparing the cases 
$t=s$ and $t=s-1$ and arguing by induction, we see that (i) 
${\mathbb Q}[\binom{S}{s}_q]$ is a direct sum of $s+1$ pairwise 
non-isomorphic absolutely irreducible subrepresentation, (ii) 
${\mathbb Q}[\binom{S}{s}_q]$ embeds into ${\mathbb Q}[\binom{S}{s+1}_q]$. 

Assume that length of $S$ is $>2s$. The morphisms 
$\partial^S_s\otimes\mathbb Q,\partial_{s-1}^S\otimes\mathbb Q,
\dots,\partial^S_1\otimes\mathbb Q$ are non-injective, but they 
cannot drop length of modules by more than 1, since their 
composition 
$\partial^S_s\partial_{s-1}^S\cdots\partial^S_1$ is non-zero. 
This means that their kernels are irreducible, and thus, 
they are surjective. 

Now assume that $S$ is infinite. Clearly, the surjectivity of 
$\partial^S_s\otimes\mathbb Q$ and the irreducibility of its kernel 
follows from the case of sufficiently large finite $S$. By Lemma 
\ref{length-upper-bound}, length of ${\mathbb Q}[\binom{S}{s}_q]$ 
does not exceed $s+1$, so it is precisely $s+1$. 

If length of $S$ is $\le 2s$ then $\partial^S_{s+1}\otimes{\mathbb Q}$ 
is not surjective, since then $\dim_{{\mathbb Q}}{\mathbb Q}[\binom{S}{s+1}_q]=
\#\binom{S}{s+1}_q<\#\binom{S}{s}_q=
\dim_{{\mathbb Q}}{\mathbb Q}[\binom{S}{s}_q]$. 

There are no other irreducible subrepresentation if $S$ is infinite, 
since ${\mathbb Q}[\binom{S}{s}_q]$ is injective and indecomposable, 
cf. \cite[Proposition 6.9]{BucurDeleanu}. \end{proof} 

\begin{corollary} In notation of Lemma \ref{length-binom}, let $A$ 
be a torsion-free commutative integral domain (or a division ring) 
endowed with the trivial $G$-action. Then, any $G$-submodule 
$M\subseteq A[\binom{S}{s}_q]$ with 
$(M\otimes\mathrm{Frac}(A))\cap A[\binom{S}{s}_q]=M$ is 
the kernel of $\partial^S_s\partial_{s-1}^S\cdots\partial_{s-i}^S$ 
for some $i$. In particular, the $A[G]$-module $A[\binom{S}{s}_q]$ 
is of length $s+1$ and $\partial_s^S\otimes A$ is surjective if $A$ 
is a division ring and length of $S$ is $>2s$. \qed \end{corollary}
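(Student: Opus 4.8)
The plan is to reduce the corollary to the already-established Lemma~\ref{length-binom} by a base-change argument, the point being that everything in sight is ``defined over $\mathbb Q$'' in a suitable sense. First I would record that for a torsion-free domain $A$ with trivial $G$-action one has $A[\binom{S}{s}_q]=A\otimes_{\mathbb Z}{\mathbb Z}[\binom{S}{s}_q]$, and tensoring further with $F:=\mathrm{Frac}(A)$ gives $F[\binom{S}{s}_q]=F\otimes_{\mathbb Q}{\mathbb Q}[\binom{S}{s}_q]$; the morphisms $\partial^S_\bullet$ are all obtained from the integral $\partial^S_\bullet$ by this base change, so they commute with it. Since by Lemma~\ref{length-binom}(iii) the composition factors of ${\mathbb Q}[\binom{S}{s}_q]$ are absolutely irreducible and pairwise non-isomorphic, the lattice of $F[G]$-submodules of $F[\binom{S}{s}_q]$ is the same finite chain as over $\mathbb Q$, namely the chain of kernels of the successive compositions $\partial^S_s\partial^S_{s-1}\cdots\partial^S_{s-i}$ for $i=-1,0,1,\dots,s$ (with $i=-1$ giving the whole module and $i=s$ giving $0$, using that the last nonzero kernel is the trivial submodule ${\mathbb Q}[\binom{S}{0}_q]^{\circ}$-type summand). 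So it suffices to show that a $G$-submodule $M\subseteq A[\binom{S}{s}_q]$ saturated in the sense $(M\otimes_A F)\cap A[\binom{S}{s}_q]=M$ is recovered from the $F[G]$-submodule $M\otimes_A F$ by this intersection.

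Next I would argue the saturation step. Given such $M$, set $\widetilde M:=M\otimes_A F$, an $F[G]$-submodule of $F[\binom{S}{s}_q]$, hence equal to $\ker(\partial^S_s\cdots\partial^S_{s-i})\otimes_{\mathbb Q}F$ for some $i$ by the previous paragraph. Since $\partial^S_s\cdots\partial^S_{s-i}$ is defined over $\mathbb Z$, its kernel over $A$ is $\ker(\partial^S_s\cdots\partial^S_{s-i}\otimes A)$, and this is automatically saturated in $A[\binom{S}{s}_q]$ because a quotient of $A[\binom{S}{s-1-i}_q]$-type free modules is torsion-free over $A$ (here one uses that $A$ is a domain and the image is a submodule of a free $A$-module). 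Therefore the intersection $\widetilde M\cap A[\binom{S}{s}_q]$ equals $\ker(\partial^S_s\cdots\partial^S_{s-i}\otimes A)$; on the other hand the hypothesis on $M$ says $M=\widetilde M\cap A[\binom{S}{s}_q]$, and hence $M=\ker(\partial^S_s\partial^S_{s-1}\cdots\partial^S_{s-i})$, which is the first assertion. The length statement for $A[\binom{S}{s}_q]$ then follows since its $G$-submodule lattice is exactly the chain just described, of length $s+1$, and the surjectivity of $\partial^S_s\otimes A$ when $\mathrm{length}(S)>2s$ is inherited from the same statement over $\mathbb Q$ (Lemma~\ref{length-binom}(i)) by right-exactness of $-\otimes_{\mathbb Q}F$ together with $\partial^S_s\otimes A$ being the restriction of $\partial^S_s\otimes F$ to a saturated submodule mapping onto a saturated submodule — more directly, $\mathrm{coker}(\partial^S_s\otimes F)=F\otimes_{\mathbb Q}\mathrm{coker}(\partial^S_s\otimes{\mathbb Q})=0$, and $\mathrm{coker}(\partial^S_s\otimes A)$ injects into $\mathrm{coker}(\partial^S_s\otimes F)$ because $A[\binom{S}{s-1}_q]$ is torsion-free over $A$.

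The main obstacle I anticipate is the case distinction hidden in Lemma~\ref{length-binom} between $S$ finite and $S$ infinite: over $\mathbb Q$ the submodule lattice is described cleanly as the chain of kernels of $\partial^S_\bullet$-compositions only once one knows the composition factors are absolutely irreducible and pairwise non-isomorphic, which is exactly Lemma~\ref{length-binom}(ii)--(iii); for infinite $S$ this in turn rests on ${\mathbb Q}[\binom{S}{s}_q]$ being injective and indecomposable, so I would make sure to invoke those parts rather than re-prove them, and to state the chain uniformly. A secondary point requiring a little care is that ``division ring'' is allowed for $A$: then $M\otimes_A F=M$, the saturation hypothesis is vacuous, and the statement is literally the $F$-linear classification, so no base change is needed — I would simply note this reduces to Lemma~\ref{length-binom} applied over the division ring $A$ in place of $\mathbb Q$, the proof of that lemma using only that the coefficient ring is a field (resp. division ring) of characteristic zero; if characteristic-zero-ness of $A$ is needed I would assume it, matching the hypotheses of Lemma~\ref{length-binom}. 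Apart from that, every step is routine: base change along $\mathbb Z\hookrightarrow A\hookrightarrow F$, torsion-freeness of submodules of free modules over a domain, and right-exactness of tensor product.
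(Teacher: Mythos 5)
The paper gives this corollary no proof beyond \qed, so the intended argument is exactly the kind of base-change reduction to Lemma~\ref{length-binom} that you are carrying out; your computations with $\mathbb Z\hookrightarrow A\hookrightarrow F:=\mathrm{Frac}(A)$, the torsion-freeness of $A[\binom{S}{s-1-i}_q]$ giving saturation of $\ker(\partial^S_s\cdots\partial^S_{s-i}\otimes A)$, and the right-exactness argument for surjectivity of $\partial^S_s\otimes A$ are all correct.

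There is, however, one inference in the middle that does not hold as stated and cannot be repaired ``uniformly.'' You write that because the composition factors of $\mathbb Q[\binom{S}{s}_q]$ are absolutely irreducible and pairwise non-isomorphic (Lemma~\ref{length-binom}(iii)), the lattice of $F[G]$-submodules is ``the same finite \emph{chain} as over $\mathbb Q$.'' Multiplicity-freeness together with absolute irreducibility does give that scalar extension $\mathbb Q\to F$ induces an isomorphism of submodule lattices, but it does \emph{not} imply that this lattice is a chain. Indeed, for finite $S$ the proof of Lemma~\ref{length-binom} shows that $\mathbb Q[\binom{S}{s}_q]$ is a \emph{direct sum} of $\min(s,\mathrm{length}(S)-s)+1$ pairwise non-isomorphic absolutely irreducibles, so its submodule lattice is Boolean, not a chain, and already a single non-socle summand gives a saturated submodule that is not of the form $\ker(\partial^S_s\cdots\partial^S_{s-i})$. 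What produces the chain (uniseriality) is precisely the conjunction of indecomposability and injectivity of $\mathbb Q[\binom{S}{s}_q]$, which Lemma~\ref{length-binom}'s proof establishes only for infinite $S$: simple socle plus $\mathbb Q[\binom{S}{s}_q]/\ker\partial^S_s\cong\mathbb Q[\binom{S}{s-1}_q]$ then gives uniseriality by induction. So your plan to ``state the chain uniformly'' over finite and infinite $S$ cannot work; the first assertion of the corollary, and hence your proof, requires $S$ infinite (equivalently the running hypothesis that the chain of kernels exhausts the saturated submodules), and you should replace the appeal to (iii) alone by the appeal to indecomposability and injectivity of $\mathbb Q[\binom{S}{s}_q]$ that the paper's proof of the lemma actually uses. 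With that repair, the rest of your base-change argument goes through as written.
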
 

\begin{corollary} Let $A$ be a left noetherian associative ring endowed 
with a smooth $\Sy_{\Psi}$-action. Then {\rm (i)} any sum of smooth 
injective left $A\langle\Sy_{\Psi}\rangle$-modules 
is again injective; {\rm (ii)} any smooth injective left 
$A\langle\Sy_{\Psi}\rangle$-module is a sum of uniquely determined (upto 
non-unique isomorphism) collection of indecomposable smooth injective 
left $A\langle\Sy_{\Psi}\rangle$-modules; {\rm (iii)} injective 
hull of a smooth noetherian left $A\langle\Sy_{\Psi}\rangle$-module 
is a finite sum of indecomposables. \end{corollary}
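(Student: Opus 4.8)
The plan is to reduce all three statements to the local noetherianity of the category (Corollary \ref{noetherian}) together with the general theory of injectives in a locally noetherian Grothendieck category, as developed by Gabriel and Matlis. Recall that in a locally noetherian Grothendieck category the following hold: arbitrary direct sums of injectives are injective; every injective object decomposes (essentially uniquely) as a direct sum of indecomposable injectives, each of which is the injective hull of a cocyclic (in fact, uniform) object; and the injective hull of a noetherian object is a finite direct sum of indecomposables. Since the category of smooth $A\langle\Sy_{\Psi}\rangle$-modules is a Grothendieck category (it has a generating family $\{A[\Psi^m]\}_{m\ge 0}$, exact filtered colimits, and a small generator by taking the sum over $m$) and is locally noetherian by Corollary \ref{noetherian}, all three conclusions will follow once these classical facts are invoked in our setting.

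First I would record that the category $\mathcal A$ of smooth $A\langle\Sy_{\Psi}\rangle$-modules is a locally noetherian Grothendieck category: $\mathcal A$ is abelian with exact filtered colimits, the objects $A[\Psi^m]$ form a generating set, and each $A[\Psi^m]$ is noetherian by Proposition \ref{noetherian-generators}; hence $\bigoplus_{m\ge 0}A[\Psi^m]$ is a generator and $\mathcal A$ has a set of noetherian generators. For (i), in a locally noetherian category a direct sum $\bigoplus_i E_i$ of injectives is injective because, to check injectivity it suffices to check the extension property against monomorphisms $N'\hookrightarrow N$ with $N$ noetherian (as noetherian objects generate); then any morphism $N'\to\bigoplus_i E_i$ has image in a finite subsum $\bigoplus_{i\in F}E_i$, which is injective, so it extends. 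For (ii), one applies the structure theorem for injectives in a locally noetherian Grothendieck category: every injective object is a direct sum of indecomposable injectives, and by the Krull--Remak--Schmidt--Azumaya theorem for such decompositions (valid because each indecomposable injective has local endomorphism ring) the collection of isomorphism types and their multiplicities is uniquely determined. For (iii), a noetherian object $W$ has finite uniform (Goldie) dimension, hence its injective hull $E(W)$ contains an essential direct sum of finitely many uniform subobjects; taking injective hulls, $E(W)=\bigoplus_{i=1}^n E(U_i)$ with each $E(U_i)$ indecomposable injective.

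The main obstacle is not any single deep step but the verification that the general machinery genuinely applies — namely that smooth $A\langle\Sy_{\Psi}\rangle$-modules form a \emph{Grothendieck} category (AB5 plus a generator) and that local noetherianity as established in Corollary \ref{noetherian} is exactly the hypothesis needed; I would cite the standard references (e.g. Gabriel's thesis, or Stenström's \emph{Rings of Quotients}, Ch.~V, or \cite{BucurDeleanu}) for the three facts listed above rather than reproving them. One subtlety worth a sentence: ``sum'' here should be read as ``direct sum'', and the indecomposable injectives appearing are precisely the injective hulls of the uniform (equivalently, cocyclic) noetherian objects; in the case $A$ a field of characteristic zero and trivial action these are the $S^{\Psi}_{T,\alpha}$ of Theorem \ref{injective-cogenerators}, but for general $A$ no such explicit list is claimed. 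Assembling these remarks, parts (i)--(iii) follow formally.

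\begin{proof} By Proposition~\ref{noetherian-generators} the objects $A[\Psi^m]$, $m\ge 0$, are noetherian, and they form a generating family of the category $\mathcal A$ of smooth left $A\langle\Sy_{\Psi}\rangle$-modules; thus $\bigoplus_{m\ge 0}A[\Psi^m]$ is a noetherian-generated generator, $\mathcal A$ has exact filtered colimits, and $\mathcal A$ is a locally noetherian Grothendieck category. All three assertions are now standard facts about such categories (see \cite[Ch.~6]{BucurDeleanu} and Gabriel's thesis).

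(i) Let $\{E_i\}$ be a family of injective objects and $E=\bigoplus_iE_i$. To prove $E$ injective it suffices to extend morphisms $N'\to E$ along monomorphisms $N'\hookrightarrow N$ with $N$ noetherian, since the noetherian objects generate $\mathcal A$. Given such $N'\hookrightarrow N$ and $f\colon N'\to E$, the noetherian object $N'$ is a quotient of some $A[\Psi^m]$, hence $f(N')$ lies in a finite subsum $\bigoplus_{i\in F}E_i$; this finite subsum is injective, so $f$ extends to $N\to\bigoplus_{i\in F}E_i\hookrightarrow E$.

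(ii) In a locally noetherian Grothendieck category every injective object is a direct sum of indecomposable injectives, each of which is the injective hull of a uniform noetherian subobject and hence has local endomorphism ring. By the Krull--Remak--Schmidt--Azumaya theorem for such direct-sum decompositions, the multiset of isomorphism classes of summands is uniquely determined up to isomorphism.

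(iii) Let $W$ be a smooth noetherian left $A\langle\Sy_{\Psi}\rangle$-module. A noetherian object has finite uniform dimension, so there is an essential monomorphism $\bigoplus_{i=1}^nU_i\hookrightarrow W$ with each $U_i$ uniform. Passing to injective hulls gives $E(W)\cong\bigoplus_{i=1}^nE(U_i)$, a finite direct sum of indecomposable injectives. \end{proof}
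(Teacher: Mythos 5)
Your proposal is correct and takes essentially the same approach as the paper: both reduce to the standard theory of injectives in a locally noetherian Grothendieck category, relying on Corollary~\ref{noetherian} (equivalently Proposition~\ref{noetherian-generators}) to establish local noetherianity. The paper is terser, simply citing \cite[Corollary 6.50, Propositions 6.51 and 6.41]{BucurDeleanu}, whereas you unwind those general facts; either exposition is fine.
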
 
\begin{proof} (i) is \cite[Corollary 6.50]{BucurDeleanu} by Corollary 
\ref{noetherian}; (ii) is \cite[Proposition 6.51]{BucurDeleanu}, 
again by Corollary \ref{noetherian}; 
(iii) is \cite[Proposition 6.41]{BucurDeleanu}. \end{proof} 

For any $N,M\ge 0$, denote by $S_{M,N}$ the (purely transcendental over $k$ of 
transcendence degree $N$) field of rational functions in $N$ variables over 
a field $k$ symmetric both in the first $M$ and in the remaining $N-M$ variables. 
For instance, (i) $S_{M,N}:=0$ if $N<M$, (ii) $S_{N,N}$ is the field of 
symmetric rational functions in $N$ variables over $k$; (iii) $S_{N,N+1}$ is 
the field of rational functions in the $(N+1)$-st variable over the field 
$S_{N,N}$ of symmetric rational functions in the first $N$ variables. 

For any $Q\in S_{M,N}$ and any $J\subseteq I\subset\Psi$ with 
$\# I=N$ and $\# J=M$, denote by $Q(J\subseteq I)\in k(\Psi)$ the value 
of $Q$ on the collection $I$, where the first $M$ arguments are in $J$. 
\begin{lemma} \label{morphismes-entre-generat} Let $K=k(\Psi)$ for a field $k$. 
Then there is a canonical isomorphism of $k$-vector spaces 
\[\Hom_{K\langle\Sy_{\Psi}\rangle}(K[\binom{\Psi}{N}],K[\binom{\Psi}{M}])
=\left\{\begin{array}{ll} S_{M,N},&\text{{\rm if $N\ge M$,}}\\
0,&\text{{\rm if $N<M$,}} \end{array}\right.\qquad Q:[T]\mapsto
\sum_{J\subseteq T}Q(J\subseteq T)[J].\]
Under this isomorphism, the  
composition $K[\binom{\Psi}{N}]\stackrel{R}{\longrightarrow}
K[\binom{\Psi}{M}]\stackrel{Q}{\longrightarrow}K[\binom{\Psi}{L}]$ 
is given by \[(Q\circ R)(J_0\subseteq T):=\sum_{J_0\subseteq 
J\subseteq T}Q(J\subseteq T)R(J_0\subseteq J),\quad\mbox{where length 
of $J$ is $M$.}\] \end{lemma}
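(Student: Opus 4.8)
The plan is to compute $\Hom_{K\langle\Sy_{\Psi}\rangle}(K[\binom{\Psi}{N}],K[\binom{\Psi}{M}])$ by exploiting that $K[\binom{\Psi}{N}]\cong K[\Sy_{\Psi}/\Sy_{\Psi,T}]$ for $T$ of order $N$, so that a morphism $\varphi$ is determined by $\varphi([T])\in K[\binom{\Psi}{M}]$, an element which must be fixed by the setwise stabilizer $\Sy_{\Psi,T}$. First I would write $\varphi([T])=\sum_{J\in\binom{\Psi}{M}}c_J[J]$ and analyze the constraint of $\Sy_{\Psi,T}$-invariance. The group $\Sy_{\Psi,T}=\Sy_T\times\Sy_{\Psi\setminus T}$ acts on $\binom{\Psi}{M}$ with orbits indexed by $i=\#(J\cap T)$ (with $0\le i\le\min(M,N)$), but the key point is semilinearity: the coefficient $c_J$ transforms by the character $g\mapsto g$ of $K^{\times}$ on coefficients, i.e. $c_{gJ}=g(c_J)$ for $g\in\Sy_{\Psi,T}$. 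Thus each $c_J$ must lie in $K^{\operatorname{Stab}}$ where $\operatorname{Stab}$ is the stabilizer of the pair $(J,T)$ inside $\Sy_{\Psi,T}$, and $c_J$ is further rigidified by the requirement that the whole vector $\sum c_J[J]$ be fixed.

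The main structural observation is that since $J$ uses only finitely many variables, $c_J$ is a rational function in those variables; the stabilizer of $(J,T)$ inside $\Sy_{\Psi,T}$ acts on $K=k(\Psi)$ fixing all variables in $T$ and permuting those of $T$ that lie in $J$ and those that lie in $T\setminus J$ separately — and if $c_J$ genuinely involved any variable outside $T$, a permutation of $\Sy_{\Psi\setminus T}$ moving that variable would move $J$ but fix $c_J$'s required value, a contradiction with the invariance of the sum. So $c_J$ is a rational function in the $N$ variables of $T$, symmetric in the $M$ variables of $J$ and in the $N-M$ variables of $T\setminus J$ — that is, an element of $S_{M,N}$ evaluated at $(J\subseteq T)$ — and, crucially, $c_J$ depends only on the pair $(J\subseteq T)$ through a single fixed $Q\in S_{M,N}$ (not on $i$ separately), because all the orbit-pieces are glued together by elements of $\Sy_T$ that move $J$ around inside $T$ while fixing the coefficient data up to the semilinear twist. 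Conversely, any $Q\in S_{M,N}$ yields a well-defined $\Sy_{\Psi,T}$-fixed vector $\sum_{J\subseteq T}Q(J\subseteq T)[J]$ and hence a morphism by Frobenius reciprocity, and when $N<M$ there are no subsets $J\subseteq T$ of size $M$, so the Hom-space vanishes. I would also check $k$-linearity of the correspondence $Q\mapsto\varphi_Q$, which is immediate.

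For the composition formula, I would simply compute $(\varphi_Q\circ\varphi_R)([T])$: one has $\varphi_R([T])=\sum_{J\subseteq T,\,\#J=M}R(J\subseteq T)[J]$, and applying $\varphi_Q$ (which is $K$-semilinear, but the coefficients $R(J\subseteq T)$ lie in $K$, so $\varphi_Q(a[J])=a\,\varphi_Q([J])$ for $a\in K$) gives $\sum_{J}R(J\subseteq T)\sum_{J_0\subseteq J,\,\#J_0=L}Q(J_0\subseteq J)[J_0]$; regrouping by $J_0$ yields $\sum_{J_0\subseteq T}\bigl(\sum_{J_0\subseteq J\subseteq T}Q(J_0\subseteq J)R(J\subseteq T)\bigr)[J_0]$, which is exactly $\varphi_{Q\circ R}([T])$ with the stated formula (note $\varphi_Q$ has source $K[\binom{\Psi}{M}]$, so the "$T$" for $\varphi_Q$ is our $J$, explaining why $Q$ is evaluated at $J_0\subseteq J$). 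The main obstacle I anticipate is the bookkeeping in the first part: proving rigorously that the invariance of the full vector $\sum_J c_J[J]$ forces a \emph{single} symmetric function $Q$ rather than an independent function on each $\Sy_{\Psi,T}$-orbit $\{J:\#(J\cap T)=i\}$ — this is where one must carefully use that $\Sy_T$ acts transitively on the set of $M$-subsets of $T$ and track how the semilinear twist propagates the value of $c_J$ across the orbit, ruling out the a priori larger answer $\bigoplus_{i=0}^{\min(M,N)}(\text{something})$ that the naive orbit count would suggest.
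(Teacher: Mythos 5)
Your proposal is correct and takes essentially the same approach as the paper: identify the Hom-space with $K[\binom{\Psi}{M}]^{\Sy_{\Psi,T}}$ via evaluation at $[T]$, use finiteness of support together with the product structure $\Sy_{\Psi,T}=\Sy_T\times\Sy_{\Psi\setminus T}$ to force the fixed element into the form $\sum_{J\subseteq T}Q(J\subseteq T)[J]$ for a single $Q\in S_{M,N}$, and check the composition formula by direct substitution. Incidentally, your version of the composition formula, $\sum_{J_0\subseteq J\subseteq T}Q(J_0\subseteq J)R(J\subseteq T)$, is the correct one — with $\#J_0=L$, $\#J=M$, $\#T=N$ the sizes then match the arities of $Q\in S_{L,M}$ and $R\in S_{M,N}$, whereas the formula as printed in the lemma has the arguments of $Q$ and $R$ interchanged.
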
 
\begin{proof} Fix a subset $I\subset\Psi$ of cardinality $N$ and a subset 
$J_0\subset I$ of cardinality $M$. Then the morphisms $K[\binom{\Psi}{N}]
\to K[\binom{\Psi}{M}]$ are in one-to-one correspondence with the elements 
of $K[\binom{\Psi}{M}]^{\Sy_{\Psi,I}}$: to a morphism we associate its 
value on $I$; any morphism is determined by its value on $I$. Any element 
in $K[\binom{\Psi}{M}]^{\Sy_{\Psi,I}}$ is of the form 
$\sum_{J\subseteq I}Q_J[J]$, where $\sigma Q_J=Q_{\sigma(J)}$ for any 
$\sigma\in\Sy_I$. In particular, $Q_J$ for all $J$ are determined by 
$Q_{J_0}$ and $Q_{J_0}$ is a rational function over $k$ in variables 
$I$ symmetric in the variables $J_0$ and in the variables 
$I\smallsetminus J_0$, i.e., the space of morphisms is identified 
with $S_{M,N}$. \end{proof} 

\section{Triviality of the smooth finite-dimensional semilinear 
representations of $\Sy_{\Psi}$} The following result is analogous 
to the case of general linear group of \cite[Proposition 5.4]{repr}. 
\begin{lemma} \label{triviality-finite-dim} Let $K=k(\Psi)$ 
for a field $k$. Then 
any smooth finite-dimensional $K$-semilinear 
representation of $\Sy_{\Psi}$ is trivial. \end{lemma}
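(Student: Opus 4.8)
The plan is to deduce the statement from Hilbert's Theorem 90 for finite groups (Theorem~\ref{Satz90}) by a descent argument along the finite symmetric subgroups of $\Sy_{\Psi}$. First note that $K^{\Sy_{\Psi}}=k$: a rational function involves only finitely many of the variables, so if it is fixed by every permutation it cannot involve any variable at all. By Lemma~\ref{inject} (with $A=K$, $A^G=k$, $\chi=\mathrm{id}_V$) the natural map $V^{\Sy_{\Psi}}\otimes_kK\to V$ is injective, so, writing $n:=\dim_KV<\infty$, it is enough to produce $n$ vectors in $V^{\Sy_{\Psi}}$ that are $K$-linearly independent; equivalently $\dim_kV^{\Sy_{\Psi}}=n$. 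By smoothness a $K$-basis of $V$ is fixed by $\Sy_{\Psi|I}$ for some finite $I\subset\Psi$, hence (as $K^{\Sy_{\Psi|I}}=k(I)$) one has $\dim_{k(I)}V^{\Sy_{\Psi|I}}=n$; this is the statement after restriction to $\Sy_{\Psi|I}$, and the real content is to descend from $\Sy_{\Psi|I}$ all the way down to $\Sy_{\Psi}$.

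The next step uses the finite subgroups. For a finite $J\subseteq\Psi$ let $\Sy_J\subseteq\Sy_{\Psi}$ be the finite group of permutations supported on $J$; it acts faithfully on $K$, with fixed field $L_J:=K^{\Sy_J}$ and $[K:L_J]=\#J!$. Since a finite group is precompact, Theorem~\ref{Satz90} applies to $(K,\Sy_J,V)$ and shows that $V$ is a trivial $\Sy_J$-semilinear representation: $V^{\Sy_J}\otimes_{L_J}K\stackrel{\sim}{\longrightarrow}V$, in particular $\dim_{L_J}V^{\Sy_J}=n$. Moreover, for finite $I\subseteq J$ one has $\Sy_I\subseteq\Sy_J$, hence $L_J\subseteq L_I$, and the natural multiplication map $V^{\Sy_J}\otimes_{L_J}L_I\to V^{\Sy_I}$ becomes the identity of $V$ after applying $-\otimes_{L_I}K$ (under the two trivializations above); as $K/L_I$ is faithfully flat, the map $V^{\Sy_J}\otimes_{L_J}L_I\to V^{\Sy_I}$ is itself an isomorphism. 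Thus $\{V^{\Sy_J}\}_J$, ordered by reverse inclusion, is a coherent projective system of finite-rank forms of $V$, each term being the descent of the previous one along an extension of rational function fields.

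It remains to pass to the limit. Fix a finite $I_0$; then the $V^{\Sy_J}$ with $J\supseteq I_0$ form a decreasing family of $k$-subspaces of the $n$-dimensional $L_{I_0}$-space $V^{\Sy_{I_0}}$, each spanning it over $L_{I_0}$ by the isomorphism just proved, and $\bigcap_JV^{\Sy_J}=V^{\Sy_{\Psi}^{\mathrm{fin}}}=V^{\Sy_{\Psi}}$ (the last equality because $\Sy_{\Psi}^{\mathrm{fin}}$ is dense in $\Sy_{\Psi}$ and $V$ is smooth, so a vector fixed by $\Sy_{\Psi}^{\mathrm{fin}}$ has open stabilizer containing a dense subgroup). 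One must show that the intersection still spans $V^{\Sy_{I_0}}$ over $L_{I_0}$, i.e. that the tower of forms admits a limit defined over $k$, and \emph{this is the main obstacle}: a priori there is a $\varprojlim^1$-type obstruction to selecting a coherent basis. I would remove it by an induction on $n$: passing from $V^{\Sy_J}$ to $V^{\Sy_{J'}}$ for $J\subseteq J'$ with $\#J'=\#J+1$ is a descent along the separable degree-$(\#J+1)$ extension $L_J/L_{J'}$ of rational function fields, and, using the structure of $\mathfrak S_{\#J+1}$ together with Theorem~\ref{Satz90} over the auxiliary fields, one can at each stage either retain a prescribed basis or split off a subrepresentation of smaller $K$-dimension; the local noetherianity of the category (Corollary~\ref{noetherian}) forces this process to terminate and delivers $n$ linearly independent $\Sy_{\Psi}$-invariant vectors. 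An alternative organisation, presumably closer to the cited analogue for the general linear group, is to present $V$ as a quotient of a finite direct sum of the generators $K[\binom{\Psi}{N}]$ (Lemma~\ref{semilin-gener}) and to induct on the maximal level $N$, using Lemma~\ref{level-of-quotient} to reduce, after restriction to a suitable $\Sy_{\Psi\smallsetminus J}$, to the sub-picture $(K,\Sy_{\Psi\smallsetminus J},k(J))$ of strictly smaller level; in either route the delicate point is the same, namely propagating triviality from the subgroups $\Sy_{\Psi\smallsetminus J}$ — equivalently $\Sy_{\Psi,J}$, via Theorem~\ref{Satz90} applied to the finite quotient $\Sy_J$ — up to all of $\Sy_{\Psi}$.
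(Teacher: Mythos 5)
Your reduction up to the point where you flag ``the main obstacle'' is sound and mirrors the start of the paper's argument: by Lemma~\ref{inject} the map $V^{\Sy_\Psi}\otimes_k K\to V$ is injective, so the whole content is to produce $n=\dim_KV$ many $k$-linearly independent invariant vectors, and after restricting to any finite $\Sy_J$ (which is precompact) Theorem~\ref{Satz90} does give $\dim_{L_J}V^{\Sy_J}=n$ and the compatibility $V^{\Sy_{J'}}\otimes_{L_{J'}}L_J\xrightarrow{\sim}V^{\Sy_J}$ for $J\subseteq J'$. The problem is exactly where you say it is, and your proposed fix does not close it.

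The gap: nonabelian $H^1$ of a group does not commute with filtered unions of subgroups, so knowing the cocycle $f_\sigma$ is a coboundary on each $\Sy_J$ does not by itself make it a coboundary on $\Sy_\Psi=\bigcup_J\Sy_J$. Concretely, each $V^{\Sy_J}$ is an $L_J$-lattice of rank $n$ in $V$, but the intersection $\bigcap_J V^{\Sy_J}=V^{\Sy_\Psi}$ could a priori be smaller than rank $n$ over $k$: the trivializations $c_J$ coming from Theorem~\ref{Satz90} are only determined up to $\GL_n(L_J)$, and there is no automatic way to choose them compatibly. Your paragraph ``one can at each stage either retain a prescribed basis or split off a subrepresentation of smaller $K$-dimension; the local noetherianity \dots forces this process to terminate'' is not an argument: the chain of $J$'s is infinite and ascending, noetherianity in the category of smooth semilinear representations does not bound an infinite chain of \emph{bases} or of $L_J$-forms of $V$, and the dichotomy ``retain the basis or split off a smaller piece'' is not established by anything you have written. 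This is precisely the $\varprojlim^1$-type difficulty you identify, and it is the whole problem, not a technicality.

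The paper resolves it with an idea your proposal is missing: exploit that the cocycle is a \emph{rational function of the variables} and can be specialized. One fixes a basis $b\subset V^{G_I}$, so $f_\sigma=\mathrm{id}$ on $G_I$ and $f_\sigma$ depends only on $\sigma|_I$ and lies in $\GL_{K_{I\cup\sigma(I)}}(V_{I\cup\sigma(I)})$. Writing $f$ in terms of the elementary symmetric functions $X,Y,Z$ in three disjoint copies of $I$, the cocycle relation becomes $\Phi(X,Z)=\Phi(X,Y)\Phi(Y,Z)$ with $\Phi(Y,X)=\Phi(X,Y)^{-1}$. Specializing $Y$ to a $k$-point $Y_0$ (or a point over a finite extension $k'|k$) at which $\Phi$ is regular gives $\Phi(X,Z)=\Phi(X,Y_0)\Phi(Z,Y_0)^{-1}$, i.e.\ an explicit coboundary; $\Phi(X,Y_0)$ then transports $b$ to a basis fixed by all $\sigma$ with $\sigma(I)\cap I=\varnothing$, which generate $\Sy_\Psi$. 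This specialization is what replaces the passage to the limit you were hoping to perform, and it uses the hypothesis $K=k(\Psi)$ in an essential way that a purely subgroup-by-subgroup descent does not. Without something of this kind, your argument is incomplete.
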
 
\begin{proof} Set $G:=\Sy_{\Psi}$ and let $b\subset V$ be a $K$-basis, 
pointwise fixed by an open subgroup of $G$, so $b\subset V_I:=V^{G_I}$ 
for a finite subset $I\subset\Psi$. By Lemma~\ref{inject} (with 
$\chi\equiv 1$), the multiplication maps $V_I\otimes_{K_I}K=
(V_I\otimes_{K_I}K_J)\otimes_{K_J}K\to V_J\otimes_{K_J}K\to V$ 
are injective for any subset $J\subseteq\Psi$ containing $I$, where 
$K_J:=K^{G_J}$. The composition is an isomorphism, so 
$V_I\otimes_{K_I}K_J\to V_J$ is an isomorphism as well. 
In particular, $f_{\sigma}=id_V$ if $\sigma\in G_I$, where 
$(f_{\sigma}\in\GL_K(V))_{\sigma}$ is the 1-cocycle of 
the $G$-action in the basis $b$. Clearly, (i) $f_{\sigma}$ 
depends only on the class $\sigma|_I$ of $\sigma$ in 
$G/G_I=\{\text{embeddings of $I$ into $\Psi$}\}$, (ii) 
$f_{\sigma}\in\GL_{K_{I\cup\sigma(I)}}(V_{I\cup\sigma(I)})$. 

Assume that $I,\sigma(I),\tau\sigma(I)$ are disjoint, $X,Y,Z$ are 
the standard collections of the elementary symmetric functions in 
$I,\tau(I),\tau\sigma(I)$, respectively. Then the cocycle condition 
$f_{\tau\sigma}=f_{\tau}f_{\sigma}^{\tau}$ 
(where $f_{\sigma}^{\tau}\in\GL_{K_{\tau(I)\cup\tau\sigma(I)}}
(V_{\tau(I)\cup\tau\sigma(I)})$) becomes $\Phi(X,Z)=\Phi(X,Y)\Phi(Y,Z)$ 
and $\Phi(Y,X)=\Phi(X,Y)^{-1}$, where $f_{\tau\sigma}=\Phi(X,Z)$, etc. 
If $k$ is infinite then there is a $k$-point $Y_0$, where $\Phi(X,Y)$ 
and $\Phi(Y,Z)$ are regular. If $k$ is finite then there is a finite field 
extension $k'|k$ and a $k'$-point $Y_0$, where $\Phi(X,Y)$ and $\Phi(Y,Z)$ 
are regular. Specializing $Y$ to such $Y_0$, we get $\Phi(X,Z)=\Phi(X,Y_0)
\Phi(Y_0,Z)=\Phi(X,Y_0)\Phi(Z,Y_0)^{-1}$. Then $\Phi(X,Y_0)$ transforms 
$b$ to a basis $b'$ fixed by all $\sigma\in G$ such that $\sigma(I)$ 
does not meet $I$. As the elements $\sigma\in G$ such that $\sigma(I)$ does 
not meet $I$ generate the whole group $G$, the basis $b'$ is fixed by entire 
$G$. This gives an embedding of $V$ into a (finite) direct sum of copies of 
$K\otimes_kk'$, which is itself a (finite) direct sum of copies of $K$, 
and finally, so is $V$ as well. \end{proof} 

\section{Structure of $K[\Psi]$}
\begin{lemma} \label{some-Exts} Let $K$ be a field endowed with a 
smooth $\Sy_{\Psi}$-action. For any pair of integers $m,n\ge 0$ one has 
$\Ext^1(K[\Psi^m],K[\Psi^n])=0$ in the category of smooth $K$-semilinear 
representations of $\Sy_{\Psi}$. In particular, the restriction morphism 
\[\rho_{m,n}:\Hom_{K\langle\Sy_{\Psi}\rangle}(K[\binom{\Psi}{m}],
K[\binom{\Psi}{n}])\to\Hom_{K\langle\Sy_{\Psi}\rangle}
(V,K[\binom{\Psi}{n}])\] is surjective if $V=K[\binom{\Psi}{m}]^{\circ}$ 
or if $m=1$, $K=k(\Psi)$ and $0\neq V\subseteq K[\Psi]$ 
(and injective if $n>0$). \end{lemma}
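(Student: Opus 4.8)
The plan is to reduce the vanishing of $\Ext^1(K[\Psi^m],K[\Psi^n])$ to an additive form of Hilbert's Theorem~90 for the open subgroups of $\Sy_\Psi$, and then to read off the surjectivity (and injectivity) statements formally. Every $\Sy_\Psi$-orbit on $\Psi^m$ has the form $\Sy_\Psi/\Sy_{\Psi|J}$ for a finite $J\subset\Psi$ (the pointwise stabilizer of the set of coordinate values), so $K[\Psi^m]$ and $K[\Psi^n]$ are finite direct sums of semilinear permutation modules $K[\Sy_\Psi/\Sy_{\Psi|I}]$, and it suffices to show $\Ext^1_{K\langle\Sy_\Psi\rangle}(K[\Sy_\Psi/\Sy_{\Psi|I}],N)=0$ for $N=K[\Sy_\Psi/\Sy_{\Psi|I'}]$. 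Concretely, an extension $0\to N\to E\to K[\Sy_\Psi/\Sy_{\Psi|I}]\to0$ splits as soon as the cyclic generator $[1]$ lifts to an $E$-vector fixed by $\Sy_{\Psi|I}$ (then $[\sigma]\mapsto\sigma\cdot(\text{that lift})$ is a $K$-linear semilinear section). Since $K[\Sy_\Psi/\Sy_{\Psi|I}]=K\langle\Sy_\Psi\rangle\otimes_{K\langle\Sy_{\Psi|I}\rangle}K$ and $K\langle\Sy_\Psi\rangle=\bigoplus_{gU\in\Sy_\Psi/\Sy_{\Psi|I}}K[gU]$ is free as a right $K\langle\Sy_{\Psi|I}\rangle$-module, induction is exact, so restriction to the open subgroup $\Sy_{\Psi|I}$ preserves injectives and Shapiro's lemma gives $\Ext^1_{K\langle\Sy_\Psi\rangle}(K[\Sy_\Psi/\Sy_{\Psi|I}],N)\cong H^1_{\mathrm{sm}}(\Sy_{\Psi|I},N)$. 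Decomposing $N|_{\Sy_{\Psi|I}}$ into its finitely many $\Sy_{\Psi|I}$-orbits $\Sy_{\Psi|I}/\Sy_{\Psi|J}$ ($J\supseteq I$ finite) and applying Shapiro once more inside $\Sy_{\Psi|I}$, everything reduces to the single statement
\[H^1_{\mathrm{sm}}(\Sy_{\Psi|J},K)=0\qquad\text{for every finite }J\subset\Psi.\]

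To prove this additive Hilbert~90 I write $\Phi:=\Psi\setminus J$, so $\Sy_{\Psi|J}\cong\Sy_\Phi$, and take a smooth $1$-cocycle $c$ of $\Sy_\Phi$ valued in $K$. Smoothness gives a finite $F\subset\Phi$ with $c|_{\Sy_{\Phi|F}}=0$; since $\Sy_F$ centralises $\Sy_{\Phi|F}$, the cocycle identity forces $c(\sigma)\in K^{\Sy_{\Phi|F}}$ for $\sigma\in\Sy_{\Phi,F}=\Sy_F\times\Sy_{\Phi|F}$, so $c$ restricts to a cocycle of the \emph{finite} group $\Sy_F$ with coefficients in the field $K^{\Sy_{\Phi|F}}$. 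By the normal basis theorem (automatic in characteristic $0$, where the cohomology of a finite group with coefficients in a $\mathbb Q$-vector space vanishes) this is a coboundary $\sigma\mapsto\sigma b-b$ with $b\in K^{\Sy_{\Phi|F}}$; subtracting $\delta(b)$, which still kills $\Sy_{\Phi|F}$, we may assume $c$ vanishes on the open subgroup $\Sy_{\Phi,F}$. The finitary symmetric group is dense in $\Sy_\Phi$ and $\{c=0\}$ is clopen (as $c$ is locally constant), so it is enough to kill $c$ on every $\Sy_{F'}$ with finite $F'\supseteq F$; iterating the finite-level argument with the correctors chosen coherently (each one invariant under the previously used ones) produces a single $b\in K$, necessarily with open stabiliser, so that $\delta(b)$ is again a smooth cocycle and $c=\delta(b)$. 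Arranging the correctors to be mutually compatible so that the limiting $b$ exists is the main obstacle; the role of density is to let a finite-level trivialisation propagate to all of $\Sy_\Phi$.

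For the surjectivity of $\rho_{m,n}$: $K[\binom{\Psi}{m}]$ and $K[\binom{\Psi}{n}]$ are $K\langle\Sy_\Psi\rangle$-direct summands of $K[\Psi^m]$ and $K[\Psi^n]$ (in characteristic zero, the antisymmetric parts, via $[\{s_1,\dots,s_m\}]\leftrightarrow\prod_{i<j}(s_i-s_j)\,[s_1]\wedge\cdots\wedge[s_m]$), so $\Ext^1(K,K[\binom{\Psi}{n}])=0$ already by the case $m=0$ of the lemma. For a submodule $V\subseteq K[\binom{\Psi}{m}]$ with quotient $Q$, the exact sequence $\Hom(K[\binom{\Psi}{m}],K[\binom{\Psi}{n}])\xrightarrow{\rho_{m,n}}\Hom(V,K[\binom{\Psi}{n}])\to\Ext^1(Q,K[\binom{\Psi}{n}])$ shows $\rho_{m,n}$ is onto once $\Ext^1(Q,K[\binom{\Psi}{n}])=0$, while $\ker\rho_{m,n}=\Hom(Q,K[\binom{\Psi}{n}])$. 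If $V=K[\binom{\Psi}{m}]^{\circ}$ then $Q\cong K$ and surjectivity follows, while $\ker\rho_{m,n}=(K[\binom{\Psi}{n}])^{\Sy_\Psi}=0$ for $n>0$ because $\Sy_\Psi$ acts transitively on the infinite set $\binom{\Psi}{n}$. If $m=1$, $K=k(\Psi)$ and $0\neq V\subseteq K[\Psi]$, then by Lemma~\ref{level-of-quotient} (with $s=1$) the module $Q=K[\Psi]/V$, restricted to a suitable open $\Sy_{\Psi|I}$, is a quotient of $K^{\oplus\#I}$ and hence finite-dimensional over $K$; therefore $Q$ has finite length as a $\Sy_\Psi$-module and, by Theorem~\ref{smooth-simple}, is a finite direct sum of copies of $K$, so once more $\Ext^1(Q,K[\binom{\Psi}{n}])=0$ by the case $m=0$ and $\ker\rho_{1,n}=0$ for $n>0$.
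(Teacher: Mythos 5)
Your plan is to reduce $\Ext^1$ to an additive Hilbert~90 via Shapiro's lemma and then prove $H^1_{\mathrm{sm}}(\Sy_{\Phi},K)=0$ by a finite-level/iteration argument; this diverges from the paper's strategy, and two of the steps do not go through as written.

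\textbf{The second Shapiro application is not valid.} You reduce to $H^1_{\mathrm{sm}}(\Sy_{\Psi|I},K[\Sy_{\Psi|I}/\Sy_{\Psi|J}])$ and claim that ``applying Shapiro once more'' gives $H^1_{\mathrm{sm}}(\Sy_{\Psi|J},K)$. Shapiro's lemma in the second variable requires the coefficient module to be a \emph{coinduction} from $\Sy_{\Psi|J}$, but $K[\Sy_{\Psi|I}/\Sy_{\Psi|J}]$ is the \emph{induction} of $K$, and these are genuinely different here: as Lemma~\ref{coinduction-gener} and Proposition~\ref{coinduction} show (at least in the linear case), the coinduction of the trivial module from $G_J$ is the strictly larger sum $\bigoplus_{\Lambda\subseteq J}K[G/G_\Lambda]$. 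So the statement you arrive at, $H^1_{\mathrm{sm}}(\Sy_{\Psi|J},K)=0$, is not what you actually need; after the first, correct, Shapiro reduction you are left with $H^1_{\mathrm{sm}}(\Sy_{\Psi|I},K[\Sy_{\Psi|I}/\Sy_{\Psi|J'}])$ for finite $J'$, and no further reduction is available. This is exactly the computation the paper has to do by hand.

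\textbf{The additive Hilbert 90 is not established.} For $H^1_{\mathrm{sm}}(\Sy_{\Phi},K)=0$ you correctly extract, from a cocycle vanishing on $\Sy_{\Phi|F}$, a corrector $b\in K^{\Sy_{\Phi|F}}$ killing the cocycle on $\Sy_{\Phi,F}$. But, as you yourself note, ``arranging the correctors to be mutually compatible so that the limiting $b$ exists is the main obstacle''--- that obstacle is precisely the content of the proof and you do not close it. Moreover, the obvious coherence requirement is delicate: after passing to $F'\supset F$ the new corrector must lie in $K^{\Sy_{\Phi,F}}$, not merely in $K^{\Sy_{\Phi|F'}}$, and $\Sy_{\Phi|F}$ does not fix $K^{\Sy_{\Phi|F'}}$. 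The paper's argument avoids any limiting process: it shows that the natural corrector $\sum_{\xi\in I^n}a_{\sigma,\xi}[\xi]$, read off from the cocycle $f_\sigma=\sigma v-v$, is \emph{independent of $\sigma$} (via the relation $a_{\tau\sigma,\xi}=a_{\sigma,\xi}$ for $\mathrm{supp}\,\xi\subseteq I$), so that a single element $v'$ already works, and the vanishing on $\{\sigma:\sigma(I)\cap I=\varnothing\}$ extends to all of $\Sy_{\Psi}$ because such $\sigma$ generate the group. That one observation is the crux, and it is absent from your sketch.

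Two smaller remarks. The appeal to ``characteristic zero antisymmetric parts'' to pass from $K[\Psi^n]$ to $K[\binom{\Psi}{n}]$ is an unnecessary restriction: the cocycle argument of the $m=0$ case applies unchanged with $K[\binom{\Psi}{n}]$ (or any $K[G/G_T]$) as the coefficient module, so $\Ext^1(K,K[\binom{\Psi}{n}])=0$ holds with no hypothesis on $\mathrm{char}\,K$. And for the $m=1$ case, once $Q=K[\Psi]/V$ is known to be finite-dimensional, the natural reference inside this paper is Lemma~\ref{triviality-finite-dim} rather than the finite-length Theorem~\ref{smooth-simple}; your use of the latter is correct but heavier than needed. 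The identification of $\ker\rho_{m,n}$ and the homological diagram chase at the end are fine and match the paper's.
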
 
\begin{proof} Consider first the case of $m=0$. Let 
$0\to K[\Psi^n]\to E\to K\to 0$ be an extension. Choose a section 
$v\in E$ (projecting to $1\in K$). Then $v\in E^{\Sy_{\Psi|I}}$ 
for a finite subset $I\subset\Psi$, and therefore, 
$f_{\sigma}:=\sigma v-v\in(K[\Psi^n])^{\Sy_{\Psi|I\cup\sigma(I)}}
\subseteq K[(I\cup\sigma(I))^n]$ for any $\sigma\in\Sy_{\Psi}$. Then 
$f_{\sigma}=\sum_{\xi\in(I\cup\sigma(I))^n}a_{\sigma,\xi}[\xi]$. 
Let $\sigma,\tau\in\Sy_{\Psi}$ be such elements 
that $\#(I\cup\sigma(I)\cup\tau\sigma(I))=3\# I$. Then $f_{\sigma}^{\tau}
=\sum_{\xi\in(\tau(I)\cup\tau\sigma(I))^n}a_{\sigma,\tau^{-1}\xi}
^{\tau}[\xi]$ and $f_{\tau}=\sum_{\xi\in(I\cup\tau(I))^n}
a_{\tau,\xi}[\xi]$. From the 1-cocycle condition 
$f_{\tau\sigma}=f_{\tau}+f_{\sigma}^{\tau}$ we see that $a_{\tau,\xi}=0$, 
unless support of $\xi$ is contained either in $I$ or in $\tau(I)$. 
Moreover, $a_{\tau,\xi}+a_{\sigma,\tau^{-1}\xi}^{\tau}=0$ 
if support of $\xi$ is contained in $\tau(I)$; 
$a_{\tau\sigma,\xi}=a_{\sigma,\xi}$ if support of $\xi$ is contained in 
$I$; $a_{\tau\sigma,\xi}=a_{\sigma,\tau^{-1}\xi}^{\tau}$ 
if support of $\xi$ is contained in $\tau\sigma(I)$. 

Then the element $v':=v-\sum_{\xi\in I^n}a_{\sigma,\xi}[\xi]$ is fixed 
by all elements $\sigma\in\Sy_{\Psi}$ such that $\#(I\cup\sigma(I))=2\# I$. 
As such $\sigma$ generate the group $\Sy_{\Psi}$, 
we get $v'\in E^{\Sy_{\Psi}}$, i.e., our extension is split. 

Now let $m$ be arbitrary. We split $\Psi^m$ into disjoint union of 
$\Sy_{\Psi}$-orbits $O$. For a fixed $O$, let $0\to K[\Psi^n]\to E
\stackrel{p}{\longrightarrow}K[O]\to 0$ be a smooth extension of 
$K\langle\Sy_{\Psi}\rangle$-modules. We know already that 
$0\to K[\Psi^n]\to p^{-1}(K\cdot[\xi])\stackrel{p}{\longrightarrow}K\cdot
[\xi]\to 0$ is a split extension of $K\langle\Sy_{\Psi|J}\rangle$-modules 
for any $\xi\in O$, where $J\subset\Psi$ is the support of $\xi$. Then we 
can choose $v\in E^{\Sy_{\Psi|J}}$ with $p(v)=[\xi]$. Such $v$ spans in 
$E$ a $K\langle\Sy_{\Psi}\rangle$-submodule identified by $p$ with 
$K[O]$, i.e., the extension splits. Then the short exact sequence 
$0\to V\to K[\binom{\Psi}{m}]\to K^N\to 0$ for some integer $N\ge 0$
induces the surjection $\rho_{m,n}$ with the kernel 
$\Hom_{K\langle\Sy_{\Psi}\rangle}(K,K[\binom{\Psi}{n}])$, which is 
$K^{\Sy_{\Psi}}$ for $n=0$ and $0$ for $n>0$ 
(cf. Lemma \ref{no-simple-submod}). \end{proof} 

\begin{lemma} \label{weight-1-quotients} 
\label{non-zero-weight-1} The cokernel of any non-zero morphism of 
$K\langle\Sy_{\Psi}\rangle$-modules $\varphi:K[\binom{\Psi}{s}]\to K[\Psi]$ 
is at most $(s-1)$-dimensional. In particular, any non-zero 
submodule of $K[\Psi]$ is of finite codimension. \end{lemma}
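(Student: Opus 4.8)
The plan is to analyze the image of $\varphi$ by pulling back along the morphisms $\partial_{s}^{\Psi}, \partial_{s-1}^{\Psi}, \dots, \partial_{1}^{\Psi}$ of Lemma~\ref{length-binom} and exploiting the description of $\Hom$-spaces from Lemma~\ref{morphismes-entre-generat}, together with the vanishing of $\mathrm{Ext}^1$ among the permutation modules (Lemma~\ref{some-Exts}). First I would reduce to the case $K = k(\Psi)$; this is not automatic since $K$ is an arbitrary field with smooth $\Sy_{\Psi}$-action, but the argument should only use the structure of $K[\binom{\Psi}{s}]$ and $K[\Psi]$ as $K\langle\Sy_{\Psi}\rangle$-modules, and Lemma~\ref{morphismes-entre-generat} as stated requires $K=k(\Psi)$ — so I would either restrict to that case as in the hypotheses of the surrounding section, or quote the analogue of Lemma~\ref{morphismes-entre-generat} valid for a general $K$ (the space of morphisms $K[\binom{\Psi}{N}]\to K[\binom{\Psi}{M}]$ being the $K^{\Sy_{\Psi,I}}$-points of the symmetric-function space $S_{M,N}$). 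Granting the explicit form $\varphi:[T]\mapsto\sum_{x\in T}Q_{T,x}[x]$ for symmetric data $Q$, the image of $\varphi$ contains, for a fixed $s$-set $T=\{x_1,\dots,x_s\}$ and its $s$ sub-$(s{-}1)$-sets, a system of $s$ vectors in $K[\Psi]$ whose supports overlap on $T$; the point is that as $T$ ranges over $\binom{\Psi}{s}$ these sweep out a submodule of finite codimension.

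The key steps, in order: (1) show the image $\mathrm{Im}\,\varphi\subseteq K[\Psi]$ is a nonzero $K\langle\Sy_{\Psi}\rangle$-submodule (nonzero because $\varphi\neq 0$); (2) using Lemma~\ref{some-Exts} (the case $m=1$), note that $\rho_{1,n}$ is surjective, so $\mathrm{Hom}$ out of any nonzero submodule $V\subseteq K[\Psi]$ agrees with $\mathrm{Hom}$ out of $K[\Psi]$ itself — this will let me compare $\mathrm{Im}\,\varphi$ with $K[\Psi]$ via their behavior under all morphisms to the generators $K[\binom{\Psi}{n}]$; (3) identify the quotient $K[\Psi]/\mathrm{Im}\,\varphi$: since any morphism $K[\binom{\Psi}{s}]\to K[\Psi]$ factors (by Lemma~\ref{morphismes-entre-generat}, with $M=1$, $N=s$) through the symmetric-function data in $S_{1,s}$, the cokernel is governed by the obstruction to lifting the constant-function component, and $S_{1,s}$ as an $S_{s,s}$-module (or the relevant piece of it) has the right size to force codimension $\le s-1$; (4) conclude the "in particular" by taking any nonzero submodule $V\subseteq K[\Psi]$, choosing a nonzero element, and realizing $V$ as containing the image of such a $\varphi$ for suitable $s$ — here the length filtration of $K[\binom{\Psi}{s}]$ and Lemma~\ref{level-of-quotient} control which $s$ is needed.

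I expect the main obstacle to be step (3): pinning down the cokernel dimension exactly as "at most $s-1$" rather than merely "finite." The clean way is to observe that $K[\Psi]$ as a $K\langle\Sy_{\Psi}\rangle$-module has, by Lemma~\ref{some-Exts} and the identification $\mathrm{Hom}(K[\Psi],K[\binom{\Psi}{n}])=S_{n,1}$ (which is $0$ for $n\ge 2$, one-dimensional for $n=1$, the field $k$ for $n=0$), a two-step structure $0\to K[\Psi]^{\circ}\to K[\Psi]\to K\to 0$ with $K[\Psi]^{\circ}$ admitting no further proper finite-codimension submodules except through the $s$-level analysis; and a nonzero $\varphi:K[\binom{\Psi}{s}]\to K[\Psi]$, composed with $K[\Psi]\to K[\binom{\Psi}{n}]$ for $n\le s$, is nonzero for at least one such $n$ by Lemma~\ref{no-morphisms} (the simple subquotients of $K[\binom{\Psi}{s}]$ sit at levels $\le s$), so $\mathrm{Im}\,\varphi$ surjects onto all but finitely many — precisely, onto all of level $1$ except the "top'' $(s-1)$-dimensional defect coming from the non-surjectivity of $\partial$'s below the threshold $2s$. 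I would make this precise by induction on $s$: the case $s=1$ is Lemma~\ref{no-simple-submod} (any nonzero submodule of $K[\Psi]$ is either all of it or of codimension $0$), and the inductive step factors $\varphi$ through $K[\binom{\Psi}{s-1}]$ using $\partial_s^{\Psi}$, picking up at most one extra dimension of cokernel each time.
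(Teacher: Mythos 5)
Your proposal misses the point of this lemma: it has a one-paragraph elementary proof, and the machinery you invoke is neither needed nor, in places, even available. The paper's argument is this. By $\Sy_{\Psi|T}$-equivariance, $\varphi([T])$ for an $s$-set $T$ must be supported on $T$ (any $x\in\Psi\smallsetminus T$ has infinite $\Sy_{\Psi|T}$-orbit), so $\varphi([T])=\sum_{x\in T}\lambda_{T,x}[x]$; and by $\Sy_{\Psi,T}$-equivariance the coefficients $\lambda_{T,x}$ are permuted (and twisted) transitively as $x$ runs over $T$, so if one of them vanished then all would, forcing $\varphi([T])=0$ and hence $\varphi=0$. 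Thus every $\lambda_{T,x}$ is nonzero. Now fix any $(s-1)$-element subset $\{b_1,\dots,b_{s-1}\}\subset\Psi$. For every other $b\in\Psi$, applying $\varphi$ to $[\{b,b_1,\dots,b_{s-1}\}]$ and dividing by the (nonzero) coefficient of $[b]$ expresses $[b]$ modulo $\mathrm{Im}\,\varphi$ as a $K$-linear combination of $[b_1],\dots,[b_{s-1}]$. The cokernel is therefore spanned by the images of those $s-1$ basis vectors, giving codimension $\le s-1$; the ``in particular'' follows since any nonzero $\alpha\in V$ lies in some $K[\binom{J}{1}]$ and generates a submodule receiving a surjection from $K[\binom{\Psi}{s}]$ with $s=\#J$ (via $[T]\mapsto g\alpha$ for suitable $g$). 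Note this works for an arbitrary field $K$ with smooth $\Sy_\Psi$-action, with no reduction to $K=k(\Psi)$.

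Concrete problems with your route. (1) You cite part~\ref{no-morphisms} of Theorem~\ref{injective-cogenerators}, but that theorem concerns \emph{linear} $k[G]$-modules over a characteristic-zero coefficient field, not semilinear $K\langle\Sy_\Psi\rangle$-modules; it does not apply here, and composition series of $K[\binom{\Psi}{s}]$ as a $K\langle\Sy_\Psi\rangle$-module are not controlled by it. (2) Your inductive step proposes to factor $\varphi$ through $\partial_s^\Psi:K[\binom{\Psi}{s}]\to K[\binom{\Psi}{s-1}]$, but by Lemma~\ref{morphismes-entre-generat} the space $\Hom(K[\binom{\Psi}{s}],K[\Psi])\cong S_{1,s}$ is much larger than the subspace of morphisms that factor through $\partial_s^\Psi$, so a general $\varphi$ admits no such factorization and the induction does not get off the ground. (3) Even granting the reduction to $K=k(\Psi)$, the argument you sketch in step~(3) about $S_{1,s}$ as an $S_{s,s}$-module is not made precise enough to yield the sharp bound $s-1$ rather than mere finiteness. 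The direct argument above is what the paper uses, and you should look for it before reaching for $\mathrm{Ext}$-vanishing or injectivity statements.
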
 
\begin{proof} Fix a subset $\{b_1,\dots,b_{s-1}\}$ 
of $\Psi$ of order $(s-1)$. Then, for any element $b\in\Psi$, $b\neq b_i$, 
the set $\{b,b_1,\dots,b_{s-1}\}$ is sent by $\varphi$ to a linear 
combination of $b,b_1,\dots,b_{s-1}$ with non-zero coefficients, 
and therefore, the image of $b$ in the cokernel of $\varphi$ is 
a linear combination of images of $b_1,\dots,b_{s-1}$. \end{proof} 

\begin{lemma} Let $K=k(\Psi)$ be endowed with the standard 
$\Sy_{\Psi}$-action. \begin{enumerate} \item There are natural bijections 
\begin{enumerate} \item between the $K\langle\Sy_{\Psi}\rangle$-submodules 
of $K[\Psi]$ of codimension $s$ and the $s$-dimensional $k$-vector 
subspaces in $k(T)$; \item between the isomorphism classes of 
$K\langle\Sy_{\Psi}\rangle$-submodules of $K[\Psi]$ of codimension 
$s$ and the $k(T)^{\times}$-orbits of the $s$-dimensional $k$-vector 
subspaces in the space $k(T)$ of rational functions in one variable $T$. 
\end{enumerate} \item An element $\sum_{t}q_t[t]\in K[\Psi]$ 
is a generator if and only if $\sum_{t}q_tQ(t)\neq 0$ 
for any $Q\in k(T)^{\times}$. \end{enumerate} \end{lemma}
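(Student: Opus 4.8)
The plan is to reduce everything to the two Hom-computations furnished by Lemma~\ref{morphismes-entre-generat}. Taking $N=M=1$ there gives $\End_{K\langle\Sy_\Psi\rangle}(K[\Psi])=S_{1,1}=k(T)$, realized by the ``multiplication'' endomorphisms $\varphi_f\colon[x]\mapsto f(x)[x]$, so that $\varphi_f\circ\varphi_g=\varphi_{fg}$ and $\Aut_{K\langle\Sy_\Psi\rangle}(K[\Psi])=k(T)^\times$; taking $N=1$, $M=0$ gives $\Hom_{K\langle\Sy_\Psi\rangle}(K[\Psi],K)=S_{0,1}=k(T)$, where $Q\in k(T)$ acts as the evaluation morphism $Q\colon\sum_tq_t[t]\mapsto\sum_tq_tQ(t)$; and $\End_{K\langle\Sy_\Psi\rangle}(K)=S_{0,0}=k$, hence $\Hom_{K\langle\Sy_\Psi\rangle}(K^s,K)=k^s$. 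I would also record that $K[\Psi]$ is an injective object of the category of smooth $K$-semilinear representations of $\Sy_\Psi$: in characteristic $0$ each generator $K[\binom\Psi N]$ is a direct summand of $K[\Psi^N]=K[\Psi]^{\otimes^N_K}$, so $\Ext^1$ into $K[\Psi]$ from it is a summand of $\Ext^1(K[\Psi^N],K[\Psi])=0$ by Lemma~\ref{some-Exts}.

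The dictionary is then: to $V\subseteq K[\Psi]$ attach its annihilator $\Xi_V:=\{Q\in k(T)\mid Q|_V=0\}$ for the pairing above, and to a $k$-subspace $\Xi\subseteq k(T)$ attach $V_\Xi:=\bigcap_{Q\in\Xi}\ker\!\big(Q\colon K[\Psi]\to K\big)$. If $V$ has finite codimension $s$ — and by Lemma~\ref{weight-1-quotients} every non-zero submodule does — then $K[\Psi]/V$ is an $s$-dimensional smooth $K$-semilinear representation, hence trivial ($\cong K^s$) by Lemma~\ref{triviality-finite-dim}, and applying $\Hom(-,K)$ to $0\to V\to K[\Psi]\to K^s\to 0$ identifies $\Xi_V$ with the image of $\Hom(K^s,K)=k^s$ in $\Hom(K[\Psi],K)=k(T)$, an $s$-dimensional $k$-subspace. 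Conversely, for $\Xi$ with $k$-basis $Q_1,\dots,Q_s$, the morphism $K[\Psi]\to K^s$, $\alpha\mapsto(Q_1(\alpha),\dots,Q_s(\alpha))$, is surjective — this is the surjectivity asserted after Lemma~\ref{semilin-gener} — for otherwise its cokernel is non-zero and, being finite-dimensional, trivial by Lemma~\ref{triviality-finite-dim}, yielding a non-zero $(\lambda_i)\in\Hom(K^s,K)=k^s$ with $\sum_i\lambda_iQ_i(x)=0$ for every variable $x\in\Psi$, hence $\sum_i\lambda_iQ_i=0$ in $k(T)$, contradicting independence of the $Q_i$. Therefore $V_\Xi$ has codimension exactly $\dim_k\Xi$ and $\Xi_{V_\Xi}=\Xi$; and for a codimension-$s$ submodule $V$ one has $V\subseteq V_{\Xi_V}$ with both of codimension $s$, so $V=V_{\Xi_V}$. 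This is (1)(a).

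For (1)(b) I would compute $Q\circ\varphi_f=fQ$ in $k(T)$, whence $\Xi_{\varphi_f(V)}=f^{-1}\Xi_V$; thus $V\mapsto\Xi_V$ carries $\Aut_{K\langle\Sy_\Psi\rangle}(K[\Psi])$-orbits of codimension-$s$ submodules bijectively onto $k(T)^\times$-orbits of $s$-dimensional $k$-subspaces of $k(T)$. It remains to see that two codimension-$s$ submodules are isomorphic as $K\langle\Sy_\Psi\rangle$-modules exactly when they lie in the same $\Aut(K[\Psi])$-orbit. One implication is trivial; for the other, an isomorphism $\psi\colon V\stackrel{\sim}{\longrightarrow}V'$ composed with $V'\hookrightarrow K[\Psi]$ extends, by injectivity of $K[\Psi]$, to some $\widetilde\psi\in\End(K[\Psi])=k(T)$; since $\widetilde\psi|_V=\psi\ne 0$ we get $\widetilde\psi=\varphi_f$ with $f\in k(T)^\times$, so $\varphi_f$ is an automorphism with $\varphi_f(V)=\psi(V)=V'$.

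Finally, (2) follows from the same dictionary applied to $V=K\langle\Sy_\Psi\rangle\alpha$: this submodule equals $K[\Psi]$ iff its codimension is $0$ iff $\Xi_V=0$; and since a morphism vanishing on a generator vanishes on the whole module, $Q\in k(T)$ kills $K\langle\Sy_\Psi\rangle\alpha$ iff $\sum_tq_tQ(t)=0$, so $\Xi_V=\{Q\in k(T)\mid\sum_tq_tQ(t)=0\}$, which is zero precisely when $\sum_tq_tQ(t)\ne 0$ for all $Q\in k(T)^\times$. The one genuine obstacle is the surjectivity of $K[\Psi]\to\Hom_k(\Xi,K)$ used to pin down $\operatorname{codim}V_\Xi=\dim_k\Xi$; it rests on the triviality of finite-dimensional semilinear representations (Lemma~\ref{triviality-finite-dim}) together with the fact that a non-zero one-variable rational function over $k$ does not vanish at a transcendental element of $K=k(\Psi)$. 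Everything else is bookkeeping with the two Hom-computations and the injectivity of $K[\Psi]$.
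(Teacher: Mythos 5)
Your overall strategy is the paper's: establish, via Lemma~\ref{weight-1-quotients} and Lemma~\ref{triviality-finite-dim}, that every non-zero submodule has finite codimension with quotient $\cong K^s$, identify $\Hom_{K\langle\Sy_\Psi\rangle}(K[\Psi],K)$ and $\End_{K\langle\Sy_\Psi\rangle}(K[\Psi])$ with $k(T)$ via Lemma~\ref{morphismes-entre-generat}, and use the $\Ext$-vanishing of Lemma~\ref{some-Exts} to show that every morphism out of a submodule of $K[\Psi]$ is the restriction of an endomorphism of $K[\Psi]$. Your write-up is more explicit than the paper's about the two-way dictionary $V\leftrightarrow\Xi_V$, which is a genuine improvement in readability, and your part (2) matches the paper's.

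There is, however, one real gap: you assert that $K[\Psi]$ is injective, deducing this from $\Ext^1(K[\binom\Psi N],K[\Psi])=0$ for the generators. Vanishing of $\Ext^1$ against a generating family does not imply injectivity (over $\mathbb Z$, $\Ext^1(\mathbb Z,\mathbb Z)=0$ but $\mathbb Z$ is not injective); for a Baer-type criterion one needs $\Ext^1$ to vanish against the quotients $K[\binom\Psi N]/S$ for \emph{all} subobjects $S$, and Lemma~\ref{some-Exts} supplies this only for $S$ the augmentation submodule or for $N=1$. Indeed injectivity of $K[\Psi]$ is precisely the $s=1$ case of Conjecture~\ref{indec-injectives}, which the paper leaves open. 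Fortunately you only invoke injectivity to extend an isomorphism $\psi\colon V\to V'\hookrightarrow K[\Psi]$ to an endomorphism of $K[\Psi]$, and for that the surjectivity of $\rho_{1,1}$ asserted in Lemma~\ref{some-Exts} (for $m=n=1$, $K=k(\Psi)$, $0\neq V\subseteq K[\Psi]$) is exactly what is needed — and it is what the paper cites, via the combination $\Hom(K[\Psi]/M,K[\Psi])=0$ and $\Ext^1(K[\Psi]/M,K[\Psi])=0$. Replace the injectivity claim by that direct citation and the argument is sound.
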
 
{\it Proof.} \begin{enumerate} \item By Lemma \ref{non-zero-weight-1}, 
the $K$-vector space $K[\Psi]/M$ is finite-dimensional for any 
non-zero $K\langle\Sy_{\Psi}\rangle$-submodule $M$ of $K[\Psi]$, 
so by Lemma \ref{triviality-finite-dim} the 
$K\langle\Sy_{\Psi}\rangle$-module $K[\Psi]/M$ is isomorphic to 
a sum of copies of $K$, and therefore, 
$M$ is the common kernel of the elements of a finite-dimensional 
$k$-vector subspace of $\Hom_{K\langle\Sy_{\Psi}\rangle}(K[\Psi],K)$ 
(identified with the field of rational functions $k(T)$). Clearly, 
the $\End_{K\langle\Sy_{\Psi}\rangle}(K[\Psi])^{\times}$-action 
preserves the isomorphism classes of the common kernels (since 
$\End_{K\langle\Sy_{\Psi}\rangle}(K[\Psi])=k(T)$ is a field). On the 
other hand, $\Hom_{K\langle\Sy_{\Psi}\rangle}(K[\Psi]/M,K[\Psi])=0$ 
and (by Lemma \ref{some-Exts}) 
$\Ext^1_{K\langle\Sy_{\Psi}\rangle}(K[\Psi]/M,K[\Psi])=0$, 
so the restriction morphism $\End_{K\langle\Sy_{\Psi}\rangle}(K[\Psi])
\to\Hom_{K\langle\Sy_{\Psi}\rangle}(M,K[\Psi])$ is an isomorphism, i.e., 
any morphism between $K\langle\Sy_{\Psi}\rangle$-submodule $M$ of 
$K[\Psi]$ is induced by an endomorphism of $K[\Psi]$ (identified 
with an element of $k(T)$). 

\item Any $Q\in k(T)^{\times}$ such that 
$\sum_{t}q_tQ(t)=0$ determines a non-zero morphism $K[\Psi]\to K$ trivial 
on the submodule generated by $\alpha:=\sum_{t}q_t[t]$, so $\alpha$ is 
not a generator. If $\alpha$ generate a proper submodule $M\subset K[\Psi]$ 
then the quotient $K[\Psi]/M$ is finite-dimensional, so by 
Lemma~\ref{triviality-finite-dim} it admits a quotient isomorphic to $K$. 
Finally, any morphism $K[\Psi]\to K$ is given by some $Q\in k(T)^{\times}$. 
\qed \end{enumerate} 

\begin{lemma} 
Let $K=k(\Psi)$ be endowed with the standard $\Sy_{\Psi}$-action. 
Then the following conditions on a non-zero rational function $q(X,Y)$ 
over $k$ are equivalent \begin{enumerate} \item \label{not-gen} 
$q:K[\binom{\Psi}{2}]\to K[\Psi]$, $[\{a,b\}]\mapsto q(a,b)[a]+q(b,a)[b]$, 
is not surjective (in other words, $q(a,b)[a]+q(b,a)[b]$ is not a generator of $K[\Psi]$), 
\item \label{cokern-nontr} the cokernel of $q:K[\binom{\Psi}{2}]\to K[\Psi]$ 
is isomorphic to $K$, \item \label{non-0-non-surj} 
$q(X,Y)=(X-Y)S(Y)R(X,Y)$ for some $S$ and a symmetric $R$, 
\item \label{exists-annul} there exists some $S(X)\neq 0$ such that 
$q(a,b)S(a)+q(b,a)S(b)=0$, \item \label{exists-exact} there exists some 
$S(X)$ such that the sequence 
$K[\binom{\Psi}{2}]\stackrel{q}{\longrightarrow}K[\Psi]
\stackrel{S}{\longrightarrow}K$ is exact. \end{enumerate}\end{lemma}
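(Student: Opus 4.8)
The plan is to prove the five conditions equivalent by the chain $(1)\Leftrightarrow(2)\Leftrightarrow(5)\Leftrightarrow(4)\Leftrightarrow(3)$, using only elementary manipulation of rational functions together with Lemmas~\ref{morphismes-entre-generat}, \ref{weight-1-quotients} and \ref{triviality-finite-dim}. By Lemma~\ref{morphismes-entre-generat} I identify $\Hom_{K\langle\Sy_{\Psi}\rangle}(K[\binom{\Psi}{2}],K[\Psi])$ with $k(X,Y)$ (the morphism attached to $q$ being $[\{a,b\}]\mapsto q(a,b)[a]+q(b,a)[b]$) and $\Hom_{K\langle\Sy_{\Psi}\rangle}(K[\Psi],K)$ with $k(T)$, under which a nonzero morphism $S$ is $\sum_t b_t[t]\mapsto\sum_t b_tS(t)$; such an $S$ is automatically surjective onto $K$ (already on a single line $K[t_0]$, since $S(t_0)\neq0$), so $\ker S$ has codimension $1$ in $K[\Psi]$. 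Since morphisms of $K\langle\Sy_{\Psi}\rangle$-modules are $K$-linear, $(S\circ q)([\{a,b\}])=q(a,b)S(a)+q(b,a)S(b)$, which is exactly the expression occurring in (4) and (5).

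For $(3)\Rightarrow(4)$: symmetry of $R$ gives $q(Y,X)=(Y-X)S(X)R(X,Y)=-(X-Y)S(X)R(X,Y)$, whence $q(X,Y)S(X)+q(Y,X)S(Y)=0$, and $S\neq0$ (else $q=0$), so (4) holds with the same $S$. For $(4)\Rightarrow(3)$: with the nonzero $S$ of (4), rewrite its relation as $q(Y,X)=-q(X,Y)S(X)/S(Y)$ and set $R(X,Y):=q(X,Y)/\bigl((X-Y)S(Y)\bigr)$; substituting the previous identity shows $R(Y,X)=R(X,Y)$, so $R$ is symmetric and $q=(X-Y)S(Y)R$. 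No hypothesis on $\operatorname{char}k$ enters, since I never invoke the identification of skew-symmetric rational functions with $(X-Y)$ times a symmetric one.

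For $(4)\Rightarrow(5)$: the $S$ furnished by (4) satisfies $S\circ q=0$, i.e. $\mathrm{im}\,q\subseteq\ker S$. As $q\neq0$ we have $\mathrm{im}\,q\neq0$, so by Lemma~\ref{weight-1-quotients} (case $s=2$) the cokernel $K[\Psi]/\mathrm{im}\,q$ is at most one-dimensional; but $\mathrm{im}\,q\subseteq\ker S\subsetneq K[\Psi]$, so it is exactly one-dimensional and $\mathrm{im}\,q=\ker S$, i.e. the sequence is exact. For $(5)\Rightarrow(4)$: exactness yields $S\circ q=0$, which is the identity in (4); here (5) is read with $S\neq0$ — the meaning consistent with its role in (3)--(4), since otherwise exactness would merely assert that $q$ is onto, hence $\mathrm{coker}\,q=0$ by Lemma~\ref{weight-1-quotients}, incompatible with (1)--(4).

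Finally, if (5) holds then $\mathrm{coker}\,q=K[\Psi]/\ker S\cong\mathrm{im}\,S=K$, which is (2); conversely, given (2), composing the projection $K[\Psi]\to\mathrm{coker}\,q$ with an isomorphism $\mathrm{coker}\,q\cong K$ produces a nonzero $S\in\Hom_{K\langle\Sy_{\Psi}\rangle}(K[\Psi],K)=k(T)$ with $\ker S=\mathrm{im}\,q$, i.e. (5). The implication $(2)\Rightarrow(1)$ is immediate ($\mathrm{coker}\,q\neq0$), and for $(1)\Rightarrow(2)$: if $q$ is not surjective then, $q\neq0$ being given, Lemma~\ref{weight-1-quotients} makes $\mathrm{coker}\,q$ exactly one-dimensional, and Lemma~\ref{triviality-finite-dim} identifies this smooth finite-dimensional $K$-semilinear $\Sy_{\Psi}$-representation with $K$. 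The only step needing genuine care is $(4)\Rightarrow(5)$, where one must exclude $\mathrm{im}\,q=K[\Psi]$ and promote $\mathrm{im}\,q\subseteq\ker S$ to an equality purely from the a priori codimension bound on $\mathrm{im}\,q$ — precisely where Lemma~\ref{weight-1-quotients} is indispensable; the only other subtlety is the exclusion of $S=0$ in (5), which I would flag explicitly.
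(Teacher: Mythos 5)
Your proof is correct, but it follows a genuinely different route than the paper. The paper proves the cycle $(1)\Rightarrow(5)\Rightarrow(4)\Rightarrow(3)\Rightarrow(2)\Rightarrow(1)$, taking $(2)\Rightarrow(1)$, $(5)\Rightarrow(4)$, $(4)\Rightarrow(3)\Rightarrow(2)$ as easy and concentrating on $(1)\Rightarrow(5)$, which it handles by an explicit computation: for a triple $\{a_0,a_1,a_2\}$ one forms the $3\times 3$ matrix of the images under $q$ of the three unordered pairs, observes that non-surjectivity forces its determinant $D=q(a_0,a_1)q(a_1,a_2)q(a_2,a_0)+q(a_1,a_0)q(a_2,a_1)q(a_0,a_2)$ to vanish, and then, after factoring $q=P(X)S(Y)\prod_i\Phi_i(X,Y)^{m_i}$ into irreducibles, identifies the vanishing of $D$ with skew-symmetry of $\prod_i\Phi_i^{m_i}$, which is essentially the factorization in (3). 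You bypass this determinant-and-factorization step entirely: $(1)\Rightarrow(2)$ is obtained from Lemmas~\ref{weight-1-quotients} and~\ref{triviality-finite-dim} (cokernel of a nonzero morphism $K[\binom{\Psi}{2}]\to K[\Psi]$ is at most $1$-dimensional and, if nonzero, is trivial), $(2)\Leftrightarrow(5)$ via $\Hom_{K\langle\Sy_\Psi\rangle}(K[\Psi],K)\cong k(T)$ from Lemma~\ref{morphismes-entre-generat}, $(4)\Rightarrow(5)$ by the same codimension count, and $(3)\Leftrightarrow(4)$ by elementary algebra with rational functions. Your approach avoids the irreducible factorization and the characteristic-sensitive identification of skew-symmetric rational functions with $(X-Y)$ times a symmetric one, and makes the dependence on the surrounding lemmas transparent; the paper's approach buys a concrete, computational criterion without invoking those structural results. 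Your remark that $S\neq 0$ must be understood implicitly in (5) is also correct — without it, taking $S=0$ would make (5) hold whenever $q$ is surjective, breaking both $(5)\Rightarrow(4)$ and the intended equivalence.
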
 
\begin{proof} (\ref{cokern-nontr})$\Rightarrow$(\ref{not-gen}) and 
(\ref{exists-exact})$\Rightarrow$(\ref{exists-annul}) are trivial; 
(\ref{exists-annul})$\Rightarrow$(\ref{non-0-non-surj})$\Rightarrow$%
(\ref{cokern-nontr}) are evident. 

(\ref{not-gen})$\Rightarrow$(\ref{exists-exact}). Let $A=\{a_0,\dots,a_s\}$ 
be a subset of $\Psi$ of order $s+1$. Then $A\smallsetminus\{a_j\}$ is 
sent by $q$ to $\sum_{i\neq j}q(A\smallsetminus\{a_i,a_j\};a_i)\{a_i\}$, 
so the image contains $s+1$ elements 
$\sum_{i\neq j}q(A\smallsetminus\{a_i,a_j\};a_i)\{a_i\}$ for 
$0\le j\le s$. In the case $s=2$ these three elements span a vector 
space of dimension 2 or 3. The dimension is 2 if and only if $D:=\det
\left(\begin{matrix}q(a_0,a_1)&q(a_1,a_0)&0\\ 0&q(a_1,a_2)&q(a_2,a_1)\\ q(a_0,a_2)&0&q(a_2,a_0)
\end{matrix}\right)=q(a_0,a_1)q(a_1,a_2)q(a_2,a_0)+q(a_1,a_0)q(a_2,a_1)q(a_0,a_2)$ vanishes. 
Let $q(X,Y)=P(X)S(Y)\prod_i\Phi_i(X,Y)^{m_i}$ be a decomposition 
into a product of irreducibles. Then 
\[D=R(a_0)R(a_1)R(a_2)(\prod_i(\Phi_i(a_0,a_1)\Phi_i(a_1,a_2)\Phi_i(a_2,a_0))^{m_i}
+\prod_i(\Phi_i(a_1,a_0)\Phi_i(a_2,a_1)\Phi_i(a_0,a_2))^{m_i}),\] where $R=PS$, 
vanishes if and only if $\prod_i(\Phi_i(a_0,a_1)\Phi_i(a_1,a_2)
\Phi_i(a_2,a_0))^{m_i}$ is a skew symmetric function, i.e., 
$\prod_i\Phi_i(X,Y)^{m_i}$ is a skew symmetric function. \end{proof} 

\section{Cyclicity of the smooth finitely generated semilinear 
representations of $\Sy_{\Psi}$} 
The following result extends the existence of a cyclic vector in 
any finite-degree non-degenerate semilinear representation of an 
endomorphism of infinite order, cf., e.g., \cite[Lemma 2.1]{pgl}. 

\begin{lemma} \label{surjectivity} Let $G$ be a permutation group, 
$K$ be a field endowed with a smooth $G$-action such that any open 
subgroup of $G$ contains an element inducing on $K$ an automorphism 
of infinite order. 
Then any smooth finitely generated $K\langle G\rangle$-module $W$ admits 
a cyclic vector. \end{lemma}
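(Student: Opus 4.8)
The plan is to reduce the statement to a counting/genericity argument about $K$-linear combinations of translates of a finite generating set. First I would pick a finite generating set $w_1,\dots,w_r$ of $W$ over $K\langle G\rangle$, all fixed by some open subgroup $U\subseteq G$; replacing $U$ by a smaller subgroup from the base if necessary, I may assume $U$ is one of the distinguished open subgroups. By hypothesis $U$ contains an element $g$ inducing an automorphism of infinite order on $K$. The idea is to look for a cyclic vector of the form $w=\sum_{i=1}^r \lambda_i\, g^{n_i} w_i$ for suitable scalars $\lambda_i\in K$ and integers $n_i$, or more flexibly $w=\sum_{i,j}\lambda_{ij}\,\sigma_{ij}w_i$ where the $\sigma_{ij}$ range over a well-chosen finite set of group elements; then I would argue that for a ``generic'' choice of the $\lambda$'s the submodule $K\langle G\rangle w$ contains each $w_i$, hence equals $W$.

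The key mechanism is an Artin-style linear independence argument, exactly as in Lemma~\ref{inject} and in the proof of Lemma~\ref{no-simple-submod}: given that $g$ has infinite order on $K$, the characters $b\mapsto b^{g^n}$ are $K$-linearly independent in a suitable sense, so one can separate the summands $w_i$ by applying elements of $K\langle\langle g\rangle\rangle$ (the skew sub-polynomial ring generated by $K$ and $g$) to $w$. Concretely, the second step would be: working one orbit at a time, show that if $w$ projects onto a cyclic vector of each cyclic subquotient $K\langle G\rangle w_i/(\text{lower terms})$, then $w$ generates; and to achieve the projection condition it suffices to invoke the infinite-order element on the base field to guarantee non-vanishing of the relevant determinant. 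Here I would lean on the finiteness inputs already available — smoothness makes the stabilizers open, and each $w_i$ has a finite $N_G(V)/V$-orbit for small enough $V$ — so all the linear algebra happens over finite-dimensional $K$-spaces, where Artin independence applies cleanly.

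The third step is to handle the passage from ``generically good'' to ``actually exists''. Since each obstruction (failure of $w$ to generate some $w_i$) is cut out by the vanishing of a nonzero polynomial expression in the $\lambda$'s with coefficients built from values $Q^{g^n}$ for $Q\in K$, and since there are only finitely many such obstructions (one per generator, using that $W$ is finitely generated and that $N_G(V)/V$ is finite so the orbits are finite), the complement of the bad locus is a nonempty Zariski-open subset of an affine space over $K$; as $K$ is infinite (it carries an automorphism of infinite order, so cannot be finite) this complement has a $K$-point. Choosing $\lambda$'s there yields the desired cyclic vector.

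The main obstacle I anticipate is the bookkeeping in the second step: making precise the filtration of $W$ by ``how many translates of the $w_i$ have appeared'' and verifying that separating the leading term of $w$ in each graded piece really does require only the infinite order of $g$ on $K$ and not some stronger transitivity hypothesis on $G$. In the finite-dimensional semilinear case (as in \cite[Lemma 2.1]{pgl}) this is transparent — one diagonalizes the cocycle using powers of the generator — but here $W$ need not be finite-dimensional over $K$, only finitely generated over $K\langle G\rangle$, so I expect the delicate point is to organize the argument so that at each stage one is manipulating a finite-dimensional $K$-subspace (the $K$-span of a finite union of finite $N_G(V)/V$-orbits) and can legitimately apply Artin independence of the characters $Q\mapsto Q^{g^n}$ there. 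Once that local-finiteness is set up correctly, the genericity conclusion should follow formally.
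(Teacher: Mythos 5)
Your proposal circles around the right ideas but misses the clean reduction that makes the paper's proof a one-liner. Once you have a finite generating set $w_1,\dots,w_r$ pointwise fixed by an open subgroup $U$, the $K$-linear span $V$ of $\{w_1,\dots,w_r\}$ is a finite-dimensional $K$-vector space, and since every $w_i$ is fixed by any $g\in U$, the space $V$ is stable under $g$: $g(\sum a_iw_i)=\sum a_i^gw_i\in V$. Thus $V$ is a finite-dimensional non-degenerate $K$-semilinear representation of the cyclic (semi)group generated by the element $g\in U$ of infinite order on $K$, and \cite[Lemma 2.1]{pgl} supplies a vector $v\in V$ with $K\langle g\rangle v=V$. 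Since $V$ contains the $w_i$, this $v$ generates $W$ over $K\langle G\rangle$. In particular, the ``main obstacle'' you anticipate — that $W$ is not finite-dimensional over $K$ — evaporates: you never need to filter $W$ or argue ``one orbit at a time''; all the work happens inside the fixed finite-dimensional $K$-subspace $V$.

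Three more concrete points. First, since $g\in U$ fixes each $w_i$, your proposed twist $g^{n_i}w_i=w_i$, so the candidate cyclic vector is just a $K$-linear combination $\sum\lambda_iw_i\in V$; the extra flexibility of $\sigma_{ij}w_i$ is unnecessary. Second, the finiteness of $N_G(V)/V$ that you invoke is not a hypothesis of this lemma (there is no Roelcke precompactness or $\Sy$-type assumption here), and it is also not needed: the only finiteness used is that $\{w_1,\dots,w_r\}$ is finite. Third, the ``nonempty Zariski-open subset'' step is not quite right as stated, because the obstruction to $w=\sum\lambda_iw_i$ being cyclic is a condition involving $\lambda_i^{g^n}$ for varying $n$, which is not a polynomial in the $\lambda_i$ over $K$; the correct non-constructive existence argument runs through Artin's linear independence of the distinct characters $a\mapsto a^{g^n}$ (which you do cite as the ``key mechanism''), exactly as in the proof of \cite[Lemma 2.1]{pgl}. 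So the skeleton is there, but you should isolate the reduction to a single finite-dimensional $K\langle g\rangle$-module and then either cite the pgl lemma, as the paper does, or reprove it there.
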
 
\begin{proof} A finite system $S$ of generators of the 
$K\langle G\rangle$-module $W$ is fixed by an open subgroup 
$U\subseteq G$. By \cite[Lemma 2.1]{pgl}, the 
$K\langle U\rangle$-module spanned by $S$ admits a cyclic vector $v$. 
Then $v$ is a cyclic vector of the $K\langle G\rangle$-module $W$. 
\end{proof} 

\begin{corollary} \label{presentation-simple-semilin} 
Let $K$ be a field endowed with a smooth faithful $\Sy_{\Psi}$-action. 
Then any smooth simple left $K\langle\Sy_{\Psi}\rangle$-module 
is isomorphic to $K[\binom{\Psi}{s}]/K\langle\Sy_{\Psi}\rangle\alpha$ 
for some $\alpha\in K[\binom{\Psi}{s}]$. \end{corollary}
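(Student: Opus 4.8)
The plan is to combine three ingredients already established: (1) the generating family for smooth semilinear representations given by the modules $K[\binom{\Psi}{s}]$ (the Example following Lemma~\ref{semilin-gener}, valid because the $\Sy_{\Psi}$-action on $K$ is faithful); (2) the cyclicity result Lemma~\ref{surjectivity}, which applies because every open subgroup of $\Sy_{\Psi}$ contains, say, an infinite cycle on a cofinite subset of $\Psi$, hence an element acting on $K$ with infinite orbit, hence inducing an automorphism of infinite order on the faithful $K$; and (3) the fact that a simple module is a quotient of any of its generators.

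First I would argue that any smooth simple $K\langle\Sy_{\Psi}\rangle$-module $W$ is a quotient of some $K[\binom{\Psi}{s}]$. By the Example after Lemma~\ref{semilin-gener}, the modules $K[\binom{\Psi}{N}]$, $N\in S$, form a system of generators; in particular some nonzero morphism $K[\binom{\Psi}{N}]\to W$ exists, and since $W$ is simple this morphism is surjective. (Alternatively one may take $S=\mathbb N$ and get every $s\ge 0$ available.) Thus $W\cong K[\binom{\Psi}{s}]/M$ for some proper $K\langle\Sy_{\Psi}\rangle$-submodule $M$ with $K[\binom{\Psi}{s}]/M$ simple, i.e. $M$ is a maximal submodule.

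Next I would upgrade ``$M$ is maximal'' to ``$M$ is of the form $K\langle\Sy_{\Psi}\rangle\alpha$ for a single $\alpha$.'' The point is that $W=K[\binom{\Psi}{s}]/M$ is a smooth finitely generated (indeed cyclic) $K\langle\Sy_{\Psi}\rangle$-module, so by Lemma~\ref{surjectivity} it admits a cyclic vector $\bar v$. Lift $\bar v$ to $v\in K[\binom{\Psi}{s}]$. Then the submodule $N:=K\langle\Sy_{\Psi}\rangle v + M$ surjects onto all of $W$, so $N=K[\binom{\Psi}{s}]$; but $v\in$ the cyclic submodule $K\langle\Sy_{\Psi}\rangle v$, and the images of $K\langle\Sy_{\Psi}\rangle v$ and of $N$ in $W$ agree, so modulo $M$ the vector $v$ already generates. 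To get $M$ itself as a principal submodule one notes that $M$ is the kernel of the surjection $K[\binom{\Psi}{s}]\to W$, $[\,\cdot\,]\mapsto$ its class; composing with an isomorphism $W\cong K[\binom{\Psi}{s}]/K\langle\Sy_{\Psi}\rangle\alpha$ is exactly what we want. Concretely: pick any $\alpha\in K[\binom{\Psi}{s}]$ whose image in $W$ is $0$ and which together with a chosen generator cuts out $M$; the cleanest route is to observe that since $W$ is cyclic we may choose the presenting surjection $K[\binom{\Psi}{s}]\twoheadrightarrow W$ to send a single generator $g_s$ (the class $[I]$ of a fixed $s$-subset) to $\bar v$, and then $M=\ker$ is generated by $\ker$ of the corresponding map, which — because $\mathrm{Hom}$ and cyclicity force the kernel of a map out of a cyclic module to be cyclic up to the relations of the source — I claim reduces to a single $\alpha$ after absorbing the obvious relations of $K[\binom{\Psi}{s}]$ into the $\Sy_{\Psi}$-action.

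The main obstacle is precisely this last point: showing that the maximal submodule $M$ is \emph{principal}, not merely that $W$ is cyclic. The honest argument is: $W$ cyclic gives a surjection $\pi\colon K[\binom{\Psi}{s}]\to W$ in the category, and since source and target are both generated by a single element one compares two such surjections $\pi$ and the ``tautological'' quotient map. Any cyclic quotient $W$ of $K[\binom{\Psi}{s}]$ is obtained by killing $K\langle\Sy_{\Psi}\rangle\alpha$ for $\alpha$ a preimage of the relation defining $\bar v$ from the standard generator — more carefully, choosing $\bar v$ cyclic and writing $\bar v$ as $\varphi([I])$ for an endomorphism-like datum is not available, so instead one takes $\alpha$ to be any element of $M$ such that $M/K\langle\Sy_{\Psi}\rangle\alpha$ maps to $0$ under $\pi$; since $\pi$ is already surjective with kernel $M$, and $W$ is simple, $M$ has no proper submodule giving a strictly larger quotient, forcing $M=K\langle\Sy_{\Psi}\rangle\alpha$ once $\alpha$ is taken to generate $M$ — which is possible because $K[\binom{\Psi}{s}]/M=W$ is simple and one can take for $\alpha$ the difference $[I]-($lift of the relation). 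I would spell this out by a short diagram chase, which I expect to be routine once the cyclicity of $W$ from Lemma~\ref{surjectivity} is in hand.
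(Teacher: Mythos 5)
Your first step is fine and matches the paper: Lemma~\ref{semilin-gener} (via the Example) gives that any smooth simple $W$ is a quotient $K[\binom{\Psi}{s}]/M$ for some $s$ and some submodule $M$. But the rest of your argument has a genuine gap, and you essentially flag it yourself. You apply Lemma~\ref{surjectivity} to $W$, concluding $W$ is cyclic — which is vacuous, since a simple module is cyclic (every nonzero vector is a cyclic vector), so no cyclicity lemma is needed for $W$. The content of the corollary is precisely the thing you cannot get this way: that the \emph{kernel} $M$ is principal. Cyclicity of a quotient says nothing about cyclicity of the kernel; kernels of surjections onto simple modules are in general not cyclic. Your final paragraph circles back to ``once $\alpha$ is taken to generate $M$,'' which is the conclusion, not a step.

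The paper closes the gap with two results you never invoke. First, Proposition~\ref{noetherian-generators} (equivalently Corollary~\ref{noetherian}): $K[\binom{\Psi}{s}]$ is a \emph{noetherian} $K\langle\Sy_{\Psi}\rangle$-module, so the submodule $M$ is \emph{finitely generated}. Only then does one apply Lemma~\ref{surjectivity} — but to $M$, not to $W$ — to upgrade ``finitely generated'' to ``cyclic,'' i.e. $M=K\langle\Sy_{\Psi}\rangle\alpha$. Without the noetherian input there is no reason for $M$ to be finitely generated, and Lemma~\ref{surjectivity} only applies to finitely generated modules, so there is no way to make your argument run. The fix is short: replace your second and third paragraphs with ``$M\subseteq K[\binom{\Psi}{s}]$ is finitely generated by Proposition~\ref{noetherian-generators}, hence cyclic by Lemma~\ref{surjectivity}.''
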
 
\begin{proof} By Lemma \ref{semilin-gener}, any smooth 
simple left $K\langle\Sy_{\Psi}\rangle$-module is isomorphic 
to a quotient of $K[\binom{\Psi}{s}]$ for an appopriate $s\ge 0$ by 
a left $K\langle\Sy_{\Psi}\rangle$-submodule $V$. By Proposition 
\ref{noetherian-generators}, the $K\langle\Sy_{\Psi}\rangle$-module $V$ 
is finitely generated, and thus, by Lemma \ref{surjectivity}, it is cyclic, i.e., 
it is generated by some $\alpha\in K[\binom{\Psi}{s}]$. \end{proof} 

\begin{corollary} Let $K=k(\Psi)$ for a field $k$. Then {\rm (1)} the $K$-semilinear 
representations of $\Sy_{\Psi}$ of the following 4 classes, where $s\ge 0$ 
is integer, are indecomposable: {\rm (i)}$_s$ $K[\binom{\Psi}{s}]$, where 
$s\neq 1$, {\rm (ii)} the $K$-semilinear subrepresentations of $K[\Psi]$, 
{\rm (iii)}$_s$ $K[\binom{\Psi}{s}]^{\circ}$, where $s\ge 2$; {\rm (2)} a pair of 
such representations consists of isomorphic ones only if they belong to the 
same class (i.e., to one of {\rm (i)}$_s$, {\rm (ii)}, {\rm (iii)}$_s$ 
for some $s$) and, in the case {\rm (ii)}, have the same codimension 
in $K[\Psi]$. \end{corollary}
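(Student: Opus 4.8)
The plan is to deduce everything from endomorphism-ring computations, together with the ``level'' invariant furnished by Lemma~\ref{growth}.

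\emph{Part (1).} For $K[\binom{\Psi}{s}]$ (for every $s$, in particular for $s\neq1$), Lemma~\ref{morphismes-entre-generat} with $N=M=s$ identifies $\End_{K\langle\Sy_{\Psi}\rangle}(K[\binom{\Psi}{s}])$ with the field $S_{s,s}$ of symmetric rational functions in $s$ variables over $k$; a commutative field has no idempotents besides $0$ and $1$, so $K[\binom{\Psi}{s}]$ is indecomposable. For a nonzero submodule $M\subseteq K[\Psi]$, Lemma~\ref{weight-1-quotients} gives $\dim_K(K[\Psi]/M)<\infty$; if $M=M_1\oplus M_2$ with both summands nonzero, then $M_1,M_2$ are nonzero submodules of $K[\Psi]$ with $M_1\cap M_2=0$, so $M_1\hookrightarrow K[\Psi]/M_2$ and $M_2\hookrightarrow K[\Psi]/M_1$ are finite-dimensional, whence $\dim_K M<\infty$, impossible since $\Psi$ is infinite. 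For $K[\binom{\Psi}{s}]^{\circ}$ with $s\geq2$ I would compute $\End_{K\langle\Sy_{\Psi}\rangle}(K[\binom{\Psi}{s}]^{\circ})$: by Lemma~\ref{some-Exts} the restriction map $\rho_{s,s}\colon\End_{K\langle\Sy_{\Psi}\rangle}(K[\binom{\Psi}{s}])\to\Hom_{K\langle\Sy_{\Psi}\rangle}(K[\binom{\Psi}{s}]^{\circ},K[\binom{\Psi}{s}])$ is surjective (case $V=K[\binom{\Psi}{s}]^{\circ}$) and injective ($n=s>0$), hence bijective; composing an endomorphism of $K[\binom{\Psi}{s}]^{\circ}$ with the inclusion $K[\binom{\Psi}{s}]^{\circ}\hookrightarrow K[\binom{\Psi}{s}]$ and extending uniquely via $\rho_{s,s}^{-1}$ yields a unital ring embedding of $\End_{K\langle\Sy_{\Psi}\rangle}(K[\binom{\Psi}{s}]^{\circ})$ into $S_{s,s}$, whose image is the set of $Q\in S_{s,s}$ for which multiplication by $Q$ — acting by $[T]\mapsto Q(T)[T]$, by Lemma~\ref{morphismes-entre-generat} — preserves the augmentation kernel; testing on $[T]-[T']$ forces $Q$ constant on $\binom{\Psi}{s}$, i.e. $Q\in k$. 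Thus $\End_{K\langle\Sy_{\Psi}\rangle}(K[\binom{\Psi}{s}]^{\circ})=k$, a field, so the module is indecomposable.

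\emph{Part (2).} The function $d_{(-)}$ of the growth-estimates subsection is an isomorphism invariant, and applying Lemma~\ref{growth} (with $q=1$, $A=K$, so $K[G/G_{\Psi_n}]\cong K[\binom{\Psi}{n}]$) shows every nonzero submodule of $K[\binom{\Psi}{n}]$ has $d$ of polynomial growth of degree exactly $n$. Thus $K[\binom{\Psi}{s}]$ and $K[\binom{\Psi}{s}]^{\circ}$ have ``level'' $s$, while every nonzero submodule of $K[\Psi]$ has level $1$; an isomorphism forces equal levels, so the only isomorphisms among the listed modules that remain to be excluded are $K[\binom{\Psi}{s}]\cong K[\binom{\Psi}{s}]^{\circ}$ for a common $s\geq2$ and isomorphisms between two nonzero submodules of $K[\Psi]$ of different codimension. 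The first cannot occur: $\End(K[\binom{\Psi}{s}])=S_{s,s}$ has transcendence degree $s\geq2$ over $k$, whereas $\End(K[\binom{\Psi}{s}]^{\circ})=k$. For the second, note that $K[\Psi]/M$ is finite-dimensional (Lemma~\ref{weight-1-quotients}) hence a sum of copies of $K$ (Lemma~\ref{triviality-finite-dim}); since $\Hom_{K\langle\Sy_{\Psi}\rangle}(K,K[\Psi])=0$ (cf.\ Lemma~\ref{no-simple-submod}) and $\Ext^1_{K\langle\Sy_{\Psi}\rangle}(K[\Psi]/M,K[\Psi])=0$ (Lemma~\ref{some-Exts}), any morphism $M\to K[\Psi]$ extends to an endomorphism of $K[\Psi]$, and $\End_{K\langle\Sy_{\Psi}\rangle}(K[\Psi])\cong k(T)$ (Lemma~\ref{morphismes-entre-generat}) acts on $K[\Psi]$ by scaling the basis $\Psi$. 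Hence an isomorphism $\phi\colon M\xrightarrow{\ \sim\ }M'$ between nonzero submodules of $K[\Psi]$, composed with $M'\hookrightarrow K[\Psi]$, is multiplication by some $Q\in k(T)$; $Q\neq0$ because $\phi$ is injective, so multiplication by $Q$ is an automorphism of $K[\Psi]$ carrying $M$ onto $M'$, and $M,M'$ have the same codimension. The level invariant also separates distinct members of (i), and of (iii).

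\emph{Main obstacle.} The step I expect to be delicate is the identification $\End_{K\langle\Sy_{\Psi}\rangle}(K[\binom{\Psi}{s}]^{\circ})=k$: one must verify that ``postcompose with the inclusion, then extend uniquely'' is genuinely a unital ring homomorphism (not merely additive), and that a symmetric rational function in $s$ variables taking one and the same value on every $s$-element subset of the infinite set $\Psi$ is necessarily a constant (this uses the algebraic independence of the coordinates in $K=k(\Psi)$). Everything else reduces quickly to Lemmas~\ref{morphismes-entre-generat}, \ref{some-Exts}, \ref{weight-1-quotients}, \ref{triviality-finite-dim} and \ref{growth}. Finally one should note that the module $K[\Psi]$, formally the excluded case $s=1$ of (i), reappears as the codimension-$0$ member of class (ii), so the listed families are exhaustive with no spurious overlap.
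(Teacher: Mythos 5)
Your proof is correct, and in several places more detailed than the paper's own.

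The paper's proof is terse: for part (1) it simply cites $\End_{K\langle\Sy_{\Psi}\rangle}(K[\binom{\Psi}{s}]^{\circ})=k$ without proof and invokes Lemma~\ref{morphismes-entre-generat}, while for part (2) it deduces that the codimension of a nonzero $V\subseteq K[\Psi]$ equals $\dim_k\Ext^1(K,V)$, an intrinsic invariant. You supply a genuine proof of $\End(K[\binom{\Psi}{s}]^{\circ})=k$ by exploiting the bijectivity of $\rho_{s,s}$ from Lemma~\ref{some-Exts} to embed the endomorphism ring into $S_{s,s}$ and then testing on $[T]-[T']$; the check that the extension map is multiplicative and that a symmetric rational function with the same specialization on disjoint $s$-element subsets is constant (by algebraic independence) are exactly the right details to verify, and they go through. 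For the pairwise non-isomorphism in part (2) your route is genuinely different: you use the growth function $d_M$ from Lemma~\ref{growth} as a "level" invariant, which handles all cross-class comparisons at once (including $K[\binom{\Psi}{s}]$ vs.\ $K[\binom{\Psi}{s'}]$, vs.\ $K[\binom{\Psi}{s'}]^{\circ}$, and vs.\ submodules of $K[\Psi]$), leaving only the same-level case $s=s'$, which you settle by comparing endomorphism rings ($S_{s,s}$ vs.\ $k$). The paper instead relies on the Hom-space vanishing $\Hom(K[\binom{\Psi}{N}],K[\binom{\Psi}{M}])=0$ for $N<M$; both work, but your level argument is more uniform. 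Finally, for the codimension invariance you extend an isomorphism $M\xrightarrow{\sim}M'$ to an automorphism of $K[\Psi]$ using the same $\Hom$- and $\Ext$-vanishing that the paper uses in the lemma immediately preceding this corollary (on submodules of $K[\Psi]$ and $k$-subspaces of $k(T)$), whereas the corollary's own proof instead identifies the codimension with $\dim_k\Ext^1(K,V)$; these are equivalent routes through the same machinery. Your indecomposability argument for submodules of $K[\Psi]$ (a nontrivial direct sum would force $M$ finite-dimensional) is a clean alternative to the paper's implicit reliance on $\End(M)\subseteq k(T)$ being a domain.
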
 
\begin{proof} This follows from $\End_{K\langle\Sy_{\Psi}\rangle}
(K[\binom{\Psi}{s}]^{\circ})=k$ and Lemma \ref{morphismes-entre-generat}. 

The short exact sequence $0\to V\to K[\Psi]\to K[\Psi]/V\to 0$ gives an 
exact sequence $0\to\Hom(K,K[\Psi]/V)\to\Ext^1(K,V)\to\Ext^1(K,K[\Psi])$. 
By Lemma \ref{some-Exts}, $\Ext^1(K,K[\Psi])=0$. By Lemma 
\ref{non-zero-weight-1}, $\dim_K(K[\Psi]/V)$ is finite if $V\neq 0$, 
so by Lemma \ref{triviality-finite-dim}, 
$\dim_k\Hom(K,K[\Psi]/V)=\dim_K(K[\Psi]/V)$ if $V\neq 0$. 
This implies that codimension of $V\neq 0$ in $K[\Psi]$ is 
$\dim_k\Ext^1(K,V)$. \end{proof} 

\begin{conjecture} \label{indec-injectives} Let $K=k(\Psi)$ for a field 
$k$ be endowed with the standard $\Sy_{\Psi}$-action. Then for any 
$s\ge 0$ the indecomposable smooth $K\langle\Sy_{\Psi}\rangle$-module 
$K[\binom{\Psi}{s}]$ is injective. Any indecomposable injective 
smooth $K\langle\Sy_{\Psi}\rangle$-module is isomorphic 
to $K[\binom{\Psi}{s}]$ for some $s\ge 0$. \end{conjecture}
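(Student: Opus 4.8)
The plan is to treat the two assertions in turn, leaning on the apparatus already in place: the category of smooth $K\langle\Sy_{\Psi}\rangle$-modules is a locally noetherian Grothendieck category (Corollary~\ref{noetherian}) generated by the objects $K[\binom{\Psi}{m}]$, $m\ge 0$ (Lemma~\ref{semilin-gener}), and $\End_{K\langle\Sy_{\Psi}\rangle}(K[\binom{\Psi}{m}])=S_{m,m}$ is a field by Lemma~\ref{morphismes-entre-generat}; hence $K[\binom{\Psi}{m}]$ is indecomposable and only injectivity and the exhaustion statement require work.

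\emph{Injectivity.} By the Baer-type criterion in this Grothendieck category, $K[\binom{\Psi}{s}]$ is injective as soon as $\Ext^{1}(Q,K[\binom{\Psi}{s}])=0$ for every cyclic smooth module $Q$; writing $Q=K[\binom{\Psi}{m}]/V$ and using the long exact sequence of $0\to V\to K[\binom{\Psi}{m}]\to Q\to 0$, this is equivalent to surjectivity of the restriction map $\Hom_{K\langle\Sy_{\Psi}\rangle}(K[\binom{\Psi}{m}],K[\binom{\Psi}{s}])\to\Hom_{K\langle\Sy_{\Psi}\rangle}(V,K[\binom{\Psi}{s}])$ for all $m$ and all submodules $V\subseteq K[\binom{\Psi}{m}]$. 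For $m\le 1$ (and for $V=K[\binom{\Psi}{m}]^{\circ}$) this is exactly Lemma~\ref{some-Exts}; the essential point is to push it through for $m\ge 2$. I would pursue the route used for the linear analogue (Theorem~\ref{injective-cogenerators}): for a finite $J\subset\Psi$ the group $\Sy_{\Psi|J}$ is the standard permutation group of $\Psi\setminus J$ over the base field $k(J)=K^{\Sy_{\Psi|J}}$, so an inductive form of the statement describes the indecomposable injective smooth $K\langle\Sy_{\Psi|J}\rangle$-modules as the $K[\binom{\Psi\setminus J}{j}]$, and one then coinduces such a module from $\Sy_{\Psi|J}$ to $\Sy_{\Psi}$. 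Coinduction is right adjoint to the exact restriction functor and therefore preserves injectives, so the point is to compute the coinduced module: the claim, a semilinear counterpart of Proposition~\ref{coinduction}, should be that it is a finite direct sum of the $K[\binom{\Psi}{s}]$, one of whose summands is $K[\binom{\Psi}{m}]$; being a direct summand of an injective module, $K[\binom{\Psi}{m}]$ is then injective. (In characteristic $0$ there is a shortcut for the part $\Ext^{1}(K[\binom{\Psi}{m}],K[\binom{\Psi}{s}])=0$: $K[\binom{\Psi}{m}]=\bigwedge_{K}^{m}K[\Psi]$ and $K[\binom{\Psi}{s}]$ are direct summands of $K[\Psi^{m}]$ and $K[\Psi^{s}]$ via the signed symmetriser, so this $\Ext^{1}$ is a direct summand of $\Ext^{1}(K[\Psi^{m}],K[\Psi^{s}])=0$ from Lemma~\ref{some-Exts}; but this still leaves the extension problem for the proper quotients $Q$.)

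\emph{Exhaustion.} Granting injectivity, let $E\ne 0$ be an indecomposable injective smooth $K\langle\Sy_{\Psi}\rangle$-module. Choose $0\ne v\in E$; its stabiliser is open, so $V:=K\langle\Sy_{\Psi}\rangle v$ is a finitely generated — hence noetherian, by Proposition~\ref{noetherian-generators} — cyclic submodule, and $V$ is essential in $E$ because every nonzero subobject of an indecomposable injective is essential; thus $E$ is an injective hull of $V$. It now suffices to embed $V$ into some $K[\binom{\Psi}{s}]$: since $K[\binom{\Psi}{s}]$ is injective, the embedding extends to a morphism $E\to K[\binom{\Psi}{s}]$ whose kernel meets the essential submodule $V$ trivially and so is zero, making $E$ a direct summand of the indecomposable module $K[\binom{\Psi}{s}]$, whence $E\cong K[\binom{\Psi}{s}]$. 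To produce the embedding, write $V=K[\binom{\Psi}{m}]/V_{0}$ with $m$ minimal (Lemma~\ref{semilin-gener}); the case $V_{0}=0$ is immediate, and for $V_{0}\ne 0$ one argues by induction on $m$ via Lemma~\ref{level-of-quotient}: after restriction to some $\Sy_{\Psi|I}$ the module $V$ is a quotient of a finite sum of $K[\binom{\Psi\setminus I}{j}]$ with $j<m$, the inductive hypothesis applied to $\Sy_{\Psi\setminus I}$ over $k(I)$ embeds it into a finite sum of $K[\binom{\Psi\setminus I}{j}]$, and one transports an essential cyclic piece back to an embedding over all of $\Sy_{\Psi}$ into some $K[\binom{\Psi}{s}]$ with $s<m$, contradicting minimality unless $V_{0}=0$. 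With both parts in hand, local noetherianity together with the already-quoted uniqueness of decompositions of injectives into indecomposables yields the complete classification: every smooth injective $K\langle\Sy_{\Psi}\rangle$-module is a direct sum of copies of the $K[\binom{\Psi}{s}]$.

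\emph{Main obstacle.} The crux is the injectivity of $K[\binom{\Psi}{s}]$ for $s$ arbitrary. Lemma~\ref{some-Exts} supplies the needed surjectivity of the restriction maps only for submodules of $K[\Psi]$ (and of $K[\binom{\Psi}{m}]^{\circ}$); extending this to all submodules of all $K[\binom{\Psi}{m}]$ — equivalently, proving that $\Ext^{1}(-,K[\binom{\Psi}{s}])$ vanishes on every cyclic smooth module — is precisely what is missing. In the coinduction approach the difficulty is concentrated in identifying the coinduced module: unlike in Proposition~\ref{coinduction}, the fixed field $K^{\Sy_{\Psi|J}}=k(J)$ properly contains $k=K^{\Sy_{\Psi}}$, so the $K\langle\Sy_{\Psi}\rangle$-module coinduced from a $K\langle\Sy_{\Psi|J}\rangle$-module must be computed by hand, and showing it is no larger than a direct sum of the $K[\binom{\Psi}{s}]$ is the heart of the matter. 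The exhaustion statement is comparatively soft, modulo the same level-induction bookkeeping as in the proof of Lemma~\ref{level-of-quotient}.
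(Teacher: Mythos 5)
The statement you are addressing is labeled a \emph{Conjecture} in the paper; the author offers no proof, only the remark that it is ``compatible with Theorem~\ref{smooth-simple}''. There is therefore no paper proof to compare against, and your proposal should be judged as an independent attempt.

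Your proposal is not a proof but an honest strategy sketch, and you are forthright about that: you explicitly flag that injectivity of $K[\binom{\Psi}{s}]$ for $s\ge 2$ is the missing step. That self-assessment is accurate. The pieces you do have are sound: indecomposability follows from $\End_{K\langle\Sy_{\Psi}\rangle}(K[\binom{\Psi}{s}])=S_{s,s}$ being a field (equivalently, from Lemma~\ref{indecomp}, which the paper already invokes for this very example); the Baer-type reduction of injectivity to surjectivity of the restriction maps $\Hom(K[\binom{\Psi}{m}],K[\binom{\Psi}{s}])\to\Hom(V,K[\binom{\Psi}{s}])$ for submodules $V\subseteq K[\binom{\Psi}{m}]$ is correct in this locally noetherian Grothendieck category; and Lemma~\ref{some-Exts} does cover exactly the cases $m\le 1$ and $V=K[\binom{\Psi}{m}]^{\circ}$ you cite.

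Two caveats about the gaps beyond the one you name. First, your coinduction route faces precisely the difficulty you identify, but it is worth spelling out why it is more than bookkeeping: in Proposition~\ref{coinduction} the coefficient ring is acted on trivially, so $\Maps_{G_J}(G,E)^{\mathrm{sm}}$ decomposes combinatorially over double cosets. In the semilinear setting the base field changes ($K^{\Sy_{\Psi|J}}=k(J)\supsetneq k$), the coinduced module $\Hom_{K\langle\Sy_{\Psi|J}\rangle}(K\langle\Sy_{\Psi}\rangle, E)^{\mathrm{sm}}$ mixes the $G$-action with the Galois twist, and there is currently no analogue of the double-coset count of Lemma~\ref{coinduction-gener} that would pin down the answer as a finite sum of $K[\binom{\Psi}{s}]$'s. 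Second, your exhaustion step contains a circularity you should be aware of: the claim that every noetherian cyclic module $V$ embeds into a finite sum of the $K[\binom{\Psi}{s}]$ is a cogenerator statement of essentially the same strength as the conjecture, and the inductive ``transport back from $\Sy_{\Psi|I}$'' move again needs the semilinear coinduction computation that is the bottleneck for injectivity. So the two halves of the conjecture are not independent: the heart of the matter in both is the same missing cogenerator/coinduction result.
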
 

\appendix
\section{Smooth semilinear representations 
of groups exhausted by compact subgroups} \label{exhaust}
Let $K$ be a field and $\psi:G\hookrightarrow\Aut_{\mathrm{field}}(K)$ 
be a group of field automorphisms of $K$. Set $k:=K^G$. 

There is a bijection between $k$-lattices $U$ in a $K$-vector 
space $V$ and the trivial $K$-semilinear $G$-actions on $V$: 
\begin{gather*}\{\text{$k$-vector subspaces $U$ in $V$ such that 
$U\otimes_kK\to V$ is bijective}\}\\ 
\xi\downarrow\phantom{H^0(G,-)}\uparrow H^0(G,-)\\ 
\{\text{structures on $V$ of $K$-semilinear representation 
isomorphic to a direct sum of copies of $K$}\}.\end{gather*}

The set of isomorphism classes of non-degenerate $K$-semilinear 
$G$-actions on a $K$-vector space $V:=U\otimes_kK$ is canonically 
identified with the set $H^1(G,\GL_K(V))$. Namely, there 
is a unique $K$-semilinear $G$-action $\xi$ on $V$ identical on the 
$k$-lattice $U$. This gives rise to the $K$-semilinear $G$-action 
on $\End_K(V)$ by $f^{\tau}:=\xi(\tau)f\xi(\tau)^{-1}\in\End_K(V)$ 
if $f\in\End_K(V)$, so the matrix of $f^{\tau}$ in the basis 
$\tau(b)$ is the result of applying $\tau$ to the matrix of $f$ in 
a basis $b$ of $U$. For each element $\sigma\in G$ there exists a unique 
element $f_{\sigma}\in\GL_K(V)$ such that $\sigma|_U=f_{\sigma}|_U$, 
so $\sigma v=f_{\sigma}\xi(\sigma)v$. This implies $\tau\sigma v=
f_{\tau}\xi(\tau)f_{\sigma}\xi(\sigma)v=f_{\tau}f_{\sigma}^{\tau}
\xi(\tau\sigma)v=f_{\tau\sigma}\xi(\tau\sigma)v$ for all $v\in V$, 
so $f_{\tau\sigma}=f_{\tau}f_{\sigma}^{\tau}$ for all 
$\sigma,\tau\in G$. 

Suppose that $G$ is exhausted by its compact subgroups, i.e., any 
compact subset of $G$ is contained in a compact subgroup. 

Let $V$ be an $K$-vector space. By Theorem \ref{Satz90}, restriction to any 
compact subgroup $U\subseteq G$ of a smooth semilinear $G$-action $\Theta:G
\to\GL_{K^G}(V)$ on $V$ is given by the action $id_{\Lambda_{U,\Theta}}
\otimes\psi$ on $\Lambda_{U,\Theta}\otimes_{K^U}K=V$ for a $K^U$-lattice 
$\Lambda_{U,\Theta}$ in $V$. Fix a system $B$ of compact 
subgroups of $G$ such that (i) $B$ covers a dense subgroup in $G$, 
(ii) any pair of subgroups in $B$ is contained in a subgroup in $B$ 
(e.g., as $B$ we can take the collection of {\sl all} compact subgroups). 

Each smooth semilinear action of $G$ on $V$ determines a 
compatible system of $K^U$-lattices $\Lambda_U$ in $V$ for all 
$U\in B$ (in other words, an element of the set $\prlim_{U\in B}
\{\text{$K^U$-lattices $\Lambda_U$ in $V$}\}$): if 
$U\subseteq U'$ then $\Lambda_U=\Lambda_{U'}\otimes_{K^{U'}}K^U$. 

Suppose that $G$ is locally compact. Then we may assume that $B$ 
consists of {\sl open} compact subgroups of $G$ and there is 
a bijection between (a) the smooth semilinear actions of $G$ 
on $V$ and (b) compatible systems of $K^U$-lattices $\Lambda_U$ 
in $V$ for all $U\in B$. 

If we fix a $K^G$-lattice $\Lambda$ then the mapping $[g]\mapsto 
g(\Lambda\otimes_{K^G}K^U)$ identifies the set of $K^U$-lattices 
in $V$ with the set $\GL_K(V)/\GL_{K^U}(\Lambda\otimes_{K^G}K^U)$, 
and therefore, the set of smooth semilinear $G$-action on $V$ 
can be identified with the set  
$\prlim_{U\in B}\GL_K(V)/\GL_{K^U}(\Lambda\otimes_{K^G}K^U)$, 
and the set of isomorphism classes of smooth semilinear actions 
of $G$ on $V$ coincides with $\GL_K(V)\backslash
[\prlim_{U\in B}\GL_K(V)/\GL_{K^U}(\Lambda\otimes_{K^G}K^U)]$. 

If $G$ is not locally compact then the {\sl smooth} semilinear 
$G$-actions on a given $K$-vector space $V$ are described as 
compatible systems $(\Lambda_U)_{U\in B}$ of $K^U$-lattices in $V$ 
such that for any vector $v\in V$ the intersection over all $U\in B$ 
of the subfields generated over $K^U$ by the coordinates of $v$ with 
respect to the lattice $\Lambda_U$ is of finite type over $K^G$. 

{\sc Example.} If $G$ is countable at infinity then as $B$ one can choose 
a totally ordered collection $U_1\subseteq U_2\subseteq U_3\subseteq\dots$ 
of compact subgroups in $G$ such that $G=\bigcup_{m\ge 1}U_m$. Any 
compatible systems of $K^{U_i}$-lattices $\Lambda_{U_i}$ can be presented 
(a priori, not uniquely) as a composition, infinite to the left, 
$\cdots b_3b_2b_1b(\Lambda)$, where $b_i\in\GL_{K^{U_i}}
(\Lambda\otimes_{K^G}K^{U_i})$ (so that $\Lambda_i:=b_{i-1}\cdots 
b_2b_1b(\Lambda)\otimes_{K^G}K^{U_i}$). 

\vspace{5mm}

\noindent
{\sl Acknowledgements.} {\small I am grateful to the referee for 
suggesting numerous improvements of exposition. The project originates 
from my stay at the Max-Planck-Institut in Bonn, some 
of its elements were realized at the Institute for Advanced Study in 
Princeton, and then attained a form close the final one at the I.H.E.S. 
in Bures-sur-Yvette. I am grateful to these institutions 
for their hospitality and exceptional working conditions. 

This research was carried out within the National Research 
University Higher School of Economics Academic Fund Programme for 
2013--2014, research grant no. 12-01-0187. 
The author was member of the Institute for Advanced Study 
in Princeton, supported by the NSF grant DMS-0635607. The author 
also gratefully acknowledges the support by the Max-Planck-Institut 
f\"{u}r Mathematik in Bonn on the early stage of the project. 

}

\vspace{5mm}


\begin{thebibliography}{}
\bibitem{BucurDeleanu} I.Bucur, A.Deleanu, Introduction to the theory 
of categories and functors, Wiley, 1968. 
\bibitem{CaminaEvans} A.R.Camina, D.M.Evans, {\em Some infinite 
permutation modules,} Quart. J. Math. Oxford Ser. (2) \textbf{42} 
(1991), no. 165, 15--26. 
\bibitem{GanLi} W.L.Gan, L.Li, {\em Noetherian property of 
infinite EI categories,} New York J. of Math., \textbf{21} (2015), 
369--382, {\tt arXiv:1407.8235v2}. 
\bibitem{topos} P.T.Johnstone, {\bf Topos theory.} 
Academic Press, London, New York, San Francisco, 1977. 
\bibitem{repr} M.Rovinsky, {\it Motives and admissible 
representations of automorphism groups of fields.} Math. Zeit.,
\textbf{249} (2005), no. 1, 163--221, {\tt math.RT/0101170}.
\bibitem{pgl} M. Rovinsky, {\it Semilinear representations of PGL,} 
Selecta Math., New ser. \textbf{11} (2005), no. 3--4, 491--522, 
{\tt arXiv:math/0306333}. 
\bibitem{H90} M. Rovinsky, {\it An analogue of Hilbert's Theorem 90 
for infinite symmetric groups,} {\tt arXiv:math/1508.02267}. 
\bibitem{sga-4-1} {\bf Th\'{e}orie des topos et cohomologie 
\'{e}tale des sch\'{e}mas.} Tome 1 LNM \textbf{269}, Springer-Verlag 1972. 
\bibitem{SaSn} S.V.Sam, A.Snowden, {\em Stability patterns in 
representation theory,} {\tt arXiv:1302.5859}. 
\bibitem{Speiser} A.Speiser, {\it Zahlentheoretische S\"{a}tze aus der Gruppentheorie,} 
Math. Zeit., \textbf{5} (1919), 1./2. Heft, 1--6. \end{thebibliography}
\end{document}